\declaretheoremstyle[bodyfont=\sl]{slanted}
\declaretheorem[name=Definition,style=definition,qed=$\dashv$,
numberwithin=section]{dfn}
\declaretheorem[name=Definition,style=definition,numbered=no,qed=$\dashv$]{dfn*}
\declaretheorem[name=Definition,style=definition,numbered=no]{dfnnoqed*}
\declaretheorem[name=Theorem,style=slanted,sibling=dfn]{tm}
\declaretheorem[name=Theorem,style=slanted,numbered=no]{tm*}
\declaretheorem[name=Corollary,style=slanted,numbered=no]{cor*}
\declaretheorem[name=Remark,style=definition,sibling=dfn]{rem}
\declaretheorem[name=Question,style=definition,sibling=dfn]{ques}
\declaretheoremstyle[headfont=\scshape]{claimstyle}
\declaretheorem[name=Claim,style=claimstyle]{clm}
\declaretheorem[name=Claim,style=claimstyle]{clmtwo}
\declaretheorem[name=Claim,style=claimstyle]{clmthree}
\declaretheorem[name=Claim,style=claimstyle]{clmfive}
\declaretheorem[name=Claim,style=claimstyle,numbered=no]{clm*}
\declaretheorem[name=Subclaim,style=claimstyle,numberwithin=clm]{sclm}
\declaretheorem[name=Subclaim,style=claimstyle,numberwithin=clmtwo]{sclmtwo}
\declaretheorem[name=Subclaim,style=claimstyle,numbered=no]{sclm*}
\declaretheorem[name=Subsubclaim,style=claimstyle,numbered=no]{ssclm*}
\declaretheoremstyle[headfont=\scshape]{casestyle}
\newcommand{\measlim}{\mathrm{ml}}
\newcommand{\successor}{\mathrm{succ}}
\newcommand{\GCH}{\mathrm{GCH}}
\newcommand{\swsw}{\mathrm{swsw}}
\newcommand{\Mmm}{\mathscr{M}}
\newcommand{\CC}{\mathbb C}
\newcommand{\QQ}{\mathbb Q}
\newcommand{\RR}{\mathbb R}
\newcommand{\PP}{\mathbb P}
\newcommand{\BB}{\mathbb B}
\newcommand{\sub}{\subseteq}
\newcommand{\cross}{\times}
\newcommand{\compat}{\parallel}
\newcommand{\incompat}{\perp}
\newcommand{\inter}{\cap}
\renewcommand{\int}{\inter}
\newcommand{\om}{\omega}
\newcommand{\pow}{\mathcal{P}}
\newcommand{\OR}{\mathrm{OR}}
\newcommand{\Hull}{\mathrm{Hull}}
\newcommand{\cut}{\backslash}
\newcommand{\Tt}{\mathcal{T}}
\newcommand{\Ss}{\mathcal{S}}
\newcommand{\Uu}{\mathcal{U}}
\newcommand{\rg}{\mathrm{rg}}
\newcommand{\pins}{\triangleleft}
\newcommand{\crit}{\mathrm{cr}}
\newcommand{\rest}{\!\upharpoonright\!}
\newcommand{\com}{\circ}
\newcommand{\lh}{\mathrm{lh}}
\newcommand{\Ult}{\mathrm{Ult}}
\newcommand{\sats}{\models}
\newcommand{\elem}{\preccurlyeq}
\newcommand{\J}{\mathcal{J}}
\newcommand{\AC}{\mathrm{AC}}
\newcommand{\HOD}{\mathrm{HOD}}
\newcommand{\HC}{\mathrm{HC}}
\newcommand{\ZFC}{\mathrm{ZFC}}
\newcommand{\ZF}{\mathrm{ZF}}
\newcommand{\Coll}{\mathrm{Col}}
\newcommand{\es}{\mathbb{E}}
\newcommand{\core}{\mathfrak{C}}
\newcommand{\dirlim}{\mathrm{dir lim}}
\newcommand{\id}{\mathrm{id}}
\newcommand{\conc}{\ \widehat{\ }\ }
\newcommand{\forces}{\dststile{}{}}
\DeclareMathOperator{\card}{card}
\DeclareMathOperator{\cof}{cof}
\DeclareMathOperator{\wfp}{wfp}
\newcommand{\OD}{\mathrm{OD}}
\newcommand{\psub}{\subsetneq}
\newcommand{\cHull}{\mathrm{cHull}}
\newcommand{\Fff}{\mathscr{F}}
\newcommand{\Ddd}{\mathscr{D}}
\newcommand{\concatB}{
  \mathbin{\rotatebox[origin=c]{90}{\scalebox{.7}{(\kern1ex)}}}
}
\title{Local mantles of $L[x]$}
\author{Farmer Schlutzenberg\footnote{
Gef\"ordert durch die Deutsche Forschungsgemeinschaft (DFG) -- Projektnummer 445387776.
Funded by the Deutsche Forschungsgemeinschaft (DFG, German Research Foundation) -- project number 445387776. This work supported by  Deutsche Forschungsgemeinschaft (DFG, German
Research
Foundation) under Germany's Excellence Strategy EXC 2044-390685587,
Mathematics M\"unster: Dynamics-Geometry-Structure.
Editing work funded by the Austrian Science Fund (FWF) [10.55776/Y1498].}\\
farmer.schlutzenberg@gmail.com}
\begin{document}

\maketitle

\begin{abstract}Assume $\ZFC$. Let $\kappa$ be a cardinal. Recall that a
\emph{${<\kappa}$-ground}
is  a transitive proper class $W$ modelling $\ZFC$ such that $V$ is a generic
extension of $W$ via a forcing $\PP\in W$ of cardinality
${<\kappa}$, and
the \emph{$\kappa$-mantle} $\Mmm_\kappa$ is the intersection
of all ${<\kappa}$-grounds.

Assume there is a Woodin cardinal and a proper class of measurables,
and let $x$ be a real of sufficiently high Turing degree.
Let $\kappa$ be a limit cardinal of $L[x]$
of uncountable cofinality in $L[x]$.
Using methods from Woodin's 
analysis of $\HOD^{L[x,G]}$, we analyze
 $\Mmm_\kappa^{L[x]}$, and show that it models ZFC + GCH + ``There is a Woodin cardinal''.
Moreover, we  show that it is a fully iterable strategy mouse,
(analogous to $\HOD^{L[x,G]}$).
We also analyze another form of ``local mantle'', partly assuming a weak form of Turing determinacy.

We also compute bounds on how much iteration strategy
can be added to $M_1$ before  $M_1^\#$ is added.\end{abstract}

\section{Introduction}

We assume the reader is familiar with the basic notions
of set-theoretic geology and of inner model theory,
and with Woodin's analysis of $\HOD^{L[x,G]}$.
For background on these, see \cite{stgeol}, \cite{outline}, \cite{hod_as_core_model}.
 Regarding terminology relating to grounds, 
mantles, etc, we follow \cite{choice_principles_in_local_mantles}.

Usuba \cite{usuba_ddg}, \cite{usuba_extendible} showed that if $\kappa$ is a strong limit cardinal
then the $\kappa$-mantle $\Mmm_\kappa$ (the intersection of all ${<\kappa}$-grounds) models $\ZF$, and the full mantle $\Mmm=\Mmm_\infty\sats\ZFC$.
A basic question, then, is whether
$\Mmm_\kappa\sats\AC$. Usuba showed that
if $\kappa$ is extendible, then $\Mmm_\kappa=\Mmm$, and hence $\Mmm_\kappa\sats\AC$.
Lietz \cite{lietz} showed that it is consistent relative to a Mahlo cardinal
that $\kappa$ is Mahlo but $\Mmm_\kappa\sats\neg\AC$.
The author, using the general theory of \cite{vm2_v2}, then
analysed $\mathscr{M}_{\kappa_0}^M$ for a certain mouse $M$\footnote{The mouse
	$M_{\swsw}$, the least proper class mouse satisfying ``there are $\delta_0<\kappa_0<\delta_1<\kappa_1$ such that each $\delta_i$ is Woodin
	and each $\kappa_i$ is strong''.} and the least strong cardinal $\kappa_0$ of $M$, showing in particular
that it models $\ZFC$ (this will appear in \cite{vm2_v2}).
Schindler \cite{schindler_fsttimtg} showed that if $\kappa$ is measurable then $\Mmm_\kappa\sats\AC$.
The author \cite{choice_principles_in_local_mantles} then established that some weaker variants of $\AC$
follow from the weak compactness and inaccessibility of $\kappa$
respectively.

Also, in \cite{vm0}, Schindler and Fuchs analyzed the full mantle
$\mathscr{M}^{L[x]}$ of $L[x]$, for a cone of reals $x$,
under large cardinal assumptions. They show that it is a fine structural
model, which models GCH, but does not have large cardinals.

In the present paper we consider  ``local mantles'' of $L[x]$
such as $\Mmm_\kappa^{L[x]}$,
where $x$ is a real of high complexity, under large cardinal hypotheses.
We will only
discuss $\Mmm_\kappa^{L[x]}$ itself when $\kappa$ is a limit cardinal (we do not
know how to handle successor cardinals),
and we will mostly focus on the case that $\kappa$ has uncountable
cofinality in $L[x]$.
But we will also deal with a more refined ``local mantle''
instead of $\Mmm_\kappa^{L[x]}$:

\begin{dfn}
Assume $\ZFC$. Given some class $\mathscr{G}$ of grounds,
the \emph{$\mathscr{G}$-mantle} 
$\Mmm_\mathscr{G}$ is $\bigcap\mathscr{G}$.

Let $\delta>\om$ be regular.
An \emph{internally-$\delta$-cc ground}
 is a ground $W$,
 as witnessed by a forcing
  $\PP\in W$
 such that $W\sats$``$\PP$ is $\delta$-cc''.
We write $\mathscr{G}^{\mathrm{int}}_{\delta\text{-cc}}$
for the class of these grounds, and 
$\Mmm^{\mathrm{int}}_{\delta\text{-cc}}=\Mmm_{\mathscr{G}^{\mathrm{int}}_{\delta\text{-cc}}}$.

An \emph{externally-$\delta$-cc ground}
is likewise, but we replace the demand
that $W\sats$``$\PP$ is $\delta$-cc''
with $V\sats$``$\PP$ is $\delta$-cc''.
We write
$\mathscr{G}^{\mathrm{ext}}_{\delta\text{-cc}}$
for the class of these grounds, and 
$\Mmm^{\mathrm{ext}}_{\delta\text{-cc}}=\Mmm_{\mathscr{G}^{\mathrm{ext}}_{\delta\text{-cc}}}$.
\end{dfn}

Note that   
$\mathscr{G}^{\text{ext}}_{\delta\text{-cc}}\sub\mathscr{G}^{\text{int}}
_{\delta\text{-cc}}$,
so $\Mmm^{\text{ext}}_{\delta\text{-cc}}\supseteq\Mmm^{\text{int}}_{\delta\text{-cc}}$.

The main theorems and arguments in the paper involve
direct limit systems of mice, and the analysis of the local mantles
relate naturally to that of the full mantles of $L[x]$ described by Schindler and Fuchs. But we state a coarse version of the theorems
first, that makes no mention of mice. \emph{Constructible Turing determinacy}
is just a weak form of Turing determinacy, for which see Definition \ref{dfn:con_Turing}.
\begin{tm}\label{tm:coarse}
	Assume there is a Woodin cardinal and a measurable cardinal above it.
	
	Then for a Turing cone of reals $x$, for every limit cardinal
	$\eta$ of $L[x]$ such that $L[x]\sats$``$\eta$ has uncountable cofinality'',
	letting $\delta=(\eta^+)^{L[x]}$, we have
	\[ \mathscr{M}_\eta^{L[x]}\sats\ZFC+\GCH+\text{``}\eta\text{ is the least measurable and }\delta\text{ the unique Woodin'',}\]
	and $\mathscr{M}_\eta^{L[x]}$ is an internally $\delta$-cc ground of $L[x]$, via a forcing $\sub\delta$.
	
	Assume also
 constructible Turing determinacy.
	Then for a Turing cone of reals $x$, letting $\theta_0$ be the least
	Mahlo cardinal of $L[x]$, for every $\eta\in(\om,\theta_0]$
	such that $\eta$ is regular in $L[x]$, letting $\delta=(\eta^{++})^{L[x]}$,
	we have:
	\begin{enumerate}[label=--]
		\item  $(\Mmm_{\eta\text{-cc}}^{\text{int}})^{L[x]}=(\Mmm_{\eta\text{-cc}}^{\text{ext}})^{L[x]}\sats\ZFC+\GCH+$``$\eta$ is the least
		measurable  and $\delta$ the unique Woodin'', and this model is an internally $\delta$-cc ground of $L[x]$, via a forcing $\sub\delta$,
		\item  for every internally $\eta$-cc ground $W$ 
		of $L[x]$, there are $A,B,\PP,\QQ\sub\eta$ such that:
		\begin{enumerate}[label=--]
			\item $L[A]\sub L[B]=W$,
			\item $L[A]$ is an externally $\eta$-cc ground of $L[x]$, as witnessed by $\PP$,
			and\item $W$ is an internally $\eta$-cc ground of $L[x]$, as witnessed by $\QQ$.
		\end{enumerate}
\end{enumerate}\end{tm}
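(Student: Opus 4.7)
The plan is to follow the template of Woodin's analysis of $\HOD^{L[x,G]}$ and the Schindler--Fuchs analysis of the full mantle $\Mmm^{L[x]}$, identifying each local mantle with a direct limit $\Mmm_\infty$ of countable iterates of an $M_1$-like strategy mouse, computed inside $L[x]$. Fix $x$ on a Turing cone of sufficiently high complexity that the iterability facts for $M_1$ and its iterates hold relative to $x$. For Part 1 let $\eta$ be a limit cardinal of $L[x]$ with $L[x]$-cofinality uncountable and set $\delta=(\eta^+)^{L[x]}$; for Part 2 let $\eta$ be $L[x]$-regular with $\eta\le\theta_0$ and set $\delta=(\eta^{++})^{L[x]}$.

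First I would build $\Mmm_\infty$ inside $L[x]$ via a direct limit system of countable iterates of $M_1$ in which $\eta$ plays the role of the stabilization/measurable and $\delta$ that of the Woodin. In Part 1 the uncountable $L[x]$-cofinality of $\eta$ supplies the stationary/reflection arguments needed to stabilize the system and ensure that its limit is a proper class with least measurable $\eta$ and unique Woodin $\delta$; in Part 2, regularity of $\eta$ together with constructible Turing determinacy plays the analogous role, stabilizing at each $L[x]$-regular $\eta\le\theta_0$. GCH in $\Mmm_\infty$ then follows from fine-structural condensation for strategy mice.

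Next I would establish the two inclusions. For $\Mmm_\infty\sub\Mmm_\eta^{L[x]}$ (and the analogue in Part 2), show that every ${<}\eta$-ground, respectively every internally or externally $\eta$-cc ground $W$ of $L[x]$ recomputes the same $\Mmm_\infty$: forcings of size ${<}\eta$ (resp.\ $\eta$-cc) preserve the stationary sets and iteration-map relations defining the direct limit, and the iteration strategies of the countable iterates are absolute across such grounds by forcing-absoluteness of iterability (granted the Woodin and a measurable above). For the converse $\Mmm_\infty\supseteq\Mmm_\eta^{L[x]}$ (etc.), I would exhibit $L[x]$ as a generic extension of $\Mmm_\infty$ via a forcing $\PP\in\Mmm_\infty$ of size $\le\delta$ and indeed $\sub\delta$, of $\Coll$- or extender-algebra-type flavour at the Woodin $\delta$; Woodinness of $\delta$ in $\Mmm_\infty$ supplies the $\delta$-cc witness internally, and a genericity iteration recovers $x$ and hence $L[x]$. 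For Part 2, the equality $(\Mmm^{\mathrm{int}}_{\eta\text{-cc}})^{L[x]}=(\Mmm^{\mathrm{ext}}_{\eta\text{-cc}})^{L[x]}$ and the $L[A]\sub L[B]=W$ decomposition fall out of a reforcing trick inside any internally $\eta$-cc ground $W$: an $\eta$-sized bookkeeping portion of the witness codes an $L[A]\sub W$ that is an externally $\eta$-cc ground of $L[x]$, and $W=L[B]$ for a suitable $B\sub\eta$.

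The main obstacle will be producing the $\delta$-cc forcing in $\Mmm_\infty$ that recovers $L[x]$: the $\Coll(\om,{<}\delta)$ framework used in Woodin's $\HOD^{L[x,G]}$ analysis must be adapted, since $\eta$ here is typically only a limit cardinal of $L[x]$ (not inaccessible), so the stabilization of the direct limit must stop below $\delta$ driven by the cofinality or chain-condition hypothesis rather than by a full collapse generic. A related delicate point is the absoluteness of the stabilization across grounds in the preceding step when $\eta$ fails to be inaccessible in $L[x]$; in Part 2, constructible Turing determinacy plays its essential role precisely in supplying enough reflection at all $L[x]$-regular $\eta\le\theta_0$ to make the stabilization uniform across the larger class of $\eta$-cc grounds.
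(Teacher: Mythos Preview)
Your proposal has the right overall shape---identify the local mantle with a strategy extension $M_\infty[*]$ of a direct limit of iterates of $M_1$---but the argument for the inclusion $\Mmm_\eta^{L[x]}\subseteq M_\infty[*]$ is a genuine gap. Exhibiting $L[x]$ as a generic extension of $M_\infty[*]$ via a $\delta$-cc forcing with $\delta=(\eta^+)^{L[x]}$ (or $(\eta^{++})^{L[x]}$) only shows $M_\infty[*]$ is a $\delta$-cc ground; it is \emph{not} a ${<}\eta$-ground (nor an $\eta$-cc ground in Part~2), since $\delta>\eta$. So $M_\infty[*]$ is not one of the grounds whose intersection defines $\Mmm_\eta^{L[x]}$, and there is no reason the mantle should be contained in it on this basis alone.

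The paper's argument for this inclusion is quite different and is the real work. One first shows that the iterates $P\in\widehat{\mathscr F}$ (those with $x$ extender-algebra generic over $P$) are themselves ${<}\eta$-grounds (resp.\ $\eta$-cc grounds) and are \emph{dense} in the class of all such grounds, via a Boolean-valued comparison/genericity iteration carried out inside an arbitrary ground $W$. This gives $M_\infty[*]\subseteq\Mmm=\bigcap\widehat{\mathscr F}$. For the converse, one argues by contradiction: if $X\in\Mmm\setminus M_\infty[*]$, then one can build (uniformly from the indiscernible parameter $s$ defining $X$) an $\omega$-sequence $N_0\dashrightarrow N_1\dashrightarrow\cdots$ in $\widehat{\mathscr F}$ with each $i_{N_n,N_{n+1}}(X)\neq X$, take the direct limit $N_\omega$ with its own $*'$-map, and verify via Schindler's forcing that $N_\omega[*']$ is \emph{still} a ground in the relevant class, hence $X\in N_\omega[*']$. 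But then $X$ is eventually fixed by the iteration maps after all, a contradiction. (In the special case $\eta$ an $L[x]$-indiscernible, the paper instead pushes $X$ up via an elementary $j:L[x]\to L[x]$ with $\crit(j)=\eta$ into $\HOD^{L(\RR^+)}$ and pulls back.) Your proposal does not contain any mechanism of this kind.

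A secondary point: the role of constructible Turing determinacy in Part~2 is not to ``stabilize the system'' but to compute $\delta_\infty=(\eta^{++})^{L[x]}$ rather than possibly $(\eta^+)^{L[x]}$; this is Steel's argument comparing $\utilde\delta^1_2$ with $\gamma^{M_\infty}_{s_n}$ after coding via almost-disjoint forcing, and it does not fit your description.
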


Note that in the first part above, $\delta=(\eta^+)^{L[x]}$,
while in the second, $\delta=(\eta^{++})^{L[x]}$.

Now recall that $M_1$ is the minimal proper class mouse
with a Woodin cardinal, and $M_1^\#$  its sharp,
which is the least active mouse with a Woodin cardinal.
We generally assume that $M_1^\#$ exists and is fully iterable;
that is, $(0,\OR,\OR)$-iterable. One can also prove more local
versions of the theorems which assume only set-much iterability, but
we leave this to the reader. (In fact for Theorem \ref{tm:coarse},
we did not quite have this assumption, as it implies that every set
has a sharp. But for that theorem, the lesser assumptions suffice.)
For any model $N$ with a unique Woodin cardinal,
we write $\delta^N$ for that Woodin.
Let $x\in\RR$ with $M_1\sub L[x]$.
Given an $L[x]$-cardinal $\delta$ with $\om<\delta^{M_1}\leq\delta$,
let $\mathscr{F}^{L[x]}_{\leq\delta}$
be the ``set'' of all non-dropping iterates
$N$ of $M_1$ such that $N|\delta^N\in L[x]$
and $\delta^N\leq\delta$. Let $\mathscr{F}^{L[x]}_{<\delta}$
be likewise, but demanding $\delta^N<\delta$.
Given $\mathscr{F}=\mathscr{F}^{L[x]}_{\leq\delta}$
or $\mathscr{F}=\mathscr{F}^{L[x]}_{<\delta}$,
assuming that $\mathscr{F}$ is directed,
let $M_\infty^{\mathscr{F}}$ be the associated direct limit.

Again assuming that $\mathscr{F}$ is directed,
we will define in \S\ref{sec:background} an associated fragment
$\Sigma^{\mathscr{F}}_\infty$ of the iteration strategy
for $M_\infty^{\mathscr{F}}$. Most
of the following theorem is well-known, and (a trivial variant of
a result) due to Hugh Woodin,
through his analysis of $\HOD^{L[x,G]}$; the only 
new thing being added is the equation $\mathscr{M}^{L[x]}_\kappa=\HOD^{L[x,G]}$,
but we state the remaining facts as a reminder and for comparison
with the theorem that follows it:

\begin{tm}\label{tm:unctbl_cof}
 Assume $M_1^\#$ exists and is fully iterable and let $x\in\RR$
 with $M_1\sub L[x]$.
 Let $\kappa$ be
 such that $L[x]\sats$``$\kappa$ is a limit cardinal of uncountable 
cofinality'' and
 $\delta^{M_1}<\kappa$. Let $G$ be $(L[x],\Coll(\om,{<\kappa}))$-generic
 and $\RR^+$ be the corresponding symmetric reals; that is,
 \[ \RR^+=\bigcup_{\alpha<\kappa}\RR\inter L[x,G\rest\alpha].\]
Then
\begin{enumerate}[label=--]
 \item $\mathscr{F}=\mathscr{F}^{L[x]}_{<\kappa}$ is directed,
 \item 
$\Mmm_\kappa^{L[x]}=\HOD^{L(\RR^+)}=M_\infty^{\mathscr{F}}[\Sigma^{\mathscr{F}}_\infty]
\sats\ZFC$,\footnote{Suppose $\kappa$ is inaccessible
	in $L[x]$. Then  $\HOD^{L(\RR^+)}=\HOD^{L[x,G]}$.
	In the case of the least inaccessible, this is mentioned in
	\cite[p.~196]{lcfd}. Note that
 (by inaccessibility) $\RR^+=\RR^{L[x,G]}$, so immediately
	 $\HOD^{L(\RR^+)}\sub\HOD^{L[x,G]}$.
	 But it is an easy consequence of Woodin's analysis (and its generalization)
	 that $\HOD^{L[x,G]}\sub\HOD^{L(\RR^+)}$. Alternatively,
	  there is also a short direct argument: Working in $L(\RR^+)$,
	  argue via homogeneity that there is a real $z$ (in fact, $x$) such that for
	  all reals $y_0,y_1\geq_T z$ and all 
 generic filters $H_i$ for $(L[y_i],\Coll(\om,{<\kappa}))$,
we have  $\HOD^{L[y_0,H_0]}=\HOD^{L[y_1,H_1]}=\HOD^{L[x,G]}$.

However, if $\kappa$ is singular, then $\kappa$ has cofinality
$\omega$ in $L[x,G]$, so $\RR^+\psub\RR^{L[x,G]}$, and in this case
it is important that we refer to $\HOD^{L(\RR^+)}$ instead of $\HOD^{L[x,G]}$.} 
\item $\delta_\infty=(\kappa^+)^{L[x]}$ is the unique Woodin cardinal
of $M_\infty^{\mathscr{F}}[\Sigma^{\mathscr{F}}_\infty]$,
\item $M_\infty^{\mathscr{F}}[\Sigma^{\mathscr{F}}_\infty]$
is an internally $\delta_\infty$-cc ground of $L[x]$ via forcing $\sub\delta_\infty$,
\item $M_\infty^{\mathscr{F}}[\Sigma^{\mathscr{F}}_\infty]$ is
fully iterable.
\end{enumerate}
\end{tm}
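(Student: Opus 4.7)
The plan is to separate the five bullets into (i) the direct-limit apparatus, which goes through by a standard Woodin-style comparison argument inside $L[x]$, (ii) the identifications and structural properties inherited from Woodin's analysis of $\HOD^{L[x,G]}$, and (iii) the genuinely new equation $\Mmm_\kappa^{L[x]}=\HOD^{L(\RR^+)}$.

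First I would establish directedness of $\mathscr{F}=\mathscr{F}^{L[x]}_{<\kappa}$. Given $N_0,N_1\in\mathscr{F}$ with Woodins $\delta^{N_i}<\kappa$ and $N_i|\delta^{N_i}\in L[x]$, I iterate both above $\max(\delta^{N_0},\delta^{N_1})$ via their (fragments of the) $M_1$-strategy to a common $N$, arguing that the comparison can be carried out in $L[x]$ and produces $\delta^N<\kappa$ because $\cof^{L[x]}(\kappa)$ is uncountable, so trees of length ${<}\kappa$ stay bounded below $\kappa$. For the next three bullets, I appeal to (the obvious variants of) Woodin's analysis: $M_\infty^{\mathscr{F}}[\Sigma^{\mathscr{F}}_\infty]$ is the ordinal-definable direct-limit system in $L(\RR^+)$, so it equals $\HOD^{L(\RR^+)}$; the supremum of the images of $\delta^{M_1}$ under the direct-limit maps is exactly $(\kappa^+)^{L[x]}$, since every $L[x]$-cardinal in $(\delta^{M_1},\kappa)$ is realized as a Woodin in some $N\in\mathscr{F}$; the ground property with internal $\delta_\infty$-cc forcing $\subseteq\delta_\infty$ comes from making $x$ generic over $M_\infty$ via the extender algebra at $\delta_\infty$; and full iterability transfers from $M_1$'s iterability by copying and absorbing the tail strategy into $\Sigma^{\mathscr{F}}_\infty$.

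The new content is the equation $\Mmm_\kappa^{L[x]}=\HOD^{L(\RR^+)}$, which I would prove in two directions. For $\HOD^{L(\RR^+)}\sub\Mmm_\kappa^{L[x]}$, let $W$ be a ${<}\kappa$-ground of $L[x]$ via $\PP\in W$ with $|\PP|^W<\kappa$. Since $|\PP|^W<\kappa$ and $\kappa$ has uncountable $L[x]$-cofinality, one can lift an $L[x]$-generic $G\sub\Coll(\om,{<}\kappa)$ to a $W$-generic for $\PP\ast\dot{\Coll}(\om,{<}\kappa)$, obtaining $L[x,G]=W[H]$ and $\RR^+$ computed identically on both sides; homogeneity of the symmetric collapse then gives $\HOD^{L(\RR^+)}\sub W$. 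For $\Mmm_\kappa^{L[x]}\sub\HOD^{L(\RR^+)}$, I would exhibit a cofinal family of ${<}\kappa$-grounds of $L[x]$ inside $\HOD^{L(\RR^+)}=M_\infty^{\mathscr{F}}[\Sigma^{\mathscr{F}}_\infty]$: for each $N\in\mathscr{F}$, using the extender algebra at $\delta^N<\kappa$ together with a suitable collapse, one shows that $N[\Sigma_N][x]=L[x]$ via a forcing of $W$-cardinality ${<}\kappa$, producing a ${<}\kappa$-ground $W_N$. The intersection of the $W_N$ as $N$ ranges over a cofinal sequence in $\mathscr{F}$ then collapses down to $M_\infty^{\mathscr{F}}[\Sigma^{\mathscr{F}}_\infty]$, because any $y\notin M_\infty^{\mathscr{F}}[\Sigma^{\mathscr{F}}_\infty]$ is moved out of $W_N$ for sufficiently late $N$ (as it would otherwise persist into the direct limit).

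The main obstacle I anticipate is the second inclusion: unlike the $\HOD^{L[x,G]}$ setting, the forcing making $M_\infty^{\mathscr{F}}[\Sigma^{\mathscr{F}}_\infty]$ a ground of $L[x]$ has size $\delta_\infty=(\kappa^+)^{L[x]}$, which is \emph{not} ${<}\kappa$, so $M_\infty^{\mathscr{F}}[\Sigma^{\mathscr{F}}_\infty]$ is not itself a ${<}\kappa$-ground, and one must go through the approximating grounds $W_N$. The delicate point is verifying that these $W_N$ are genuinely ${<}\kappa$-grounds in $L[x]$'s sense (bounded-cardinality forcing) and that their intersection really is $M_\infty^{\mathscr{F}}[\Sigma^{\mathscr{F}}_\infty]$, which amounts to a careful tail-of-the-direct-limit analysis. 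A secondary subtlety is the singular case: when $\kappa$ is singular in $L[x,G]$ (even though $\cof^{L[x]}(\kappa)>\om$), $\RR^+\psub\RR^{L[x,G]}$, so the footnoted coincidence $\HOD^{L(\RR^+)}=\HOD^{L[x,G]}$ fails and one must verify that the symmetric model $L(\RR^+)$ is stable across all the grounds in play.
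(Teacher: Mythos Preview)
Your plan for the background bullets (directedness, $\HOD^{L(\RR^+)}=M_\infty[\Sigma_\infty]$, $\delta_\infty=(\kappa^+)^{L[x]}$, the ground property via Schindler's variant of the extender algebra, iterability) is essentially what the paper does in \S\ref{sec:background}; there is no disagreement there. Your argument for $M_\infty[\Sigma_\infty]\sub\Mmm_\kappa^{L[x]}$ via homogeneity of the symmetric collapse is a reasonable alternative to the paper's route, which instead shows directly that every ${<}\kappa$-ground $W$ contains some $N\in\widehat{\mathscr{F}}$ (by Boolean-valued comparison inside $W$), and each such $N$ computes $M_\infty[*]$ uniformly from the parameter $\kappa$.

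The genuine gap is in your direction $\Mmm_\kappa^{L[x]}\sub M_\infty[\Sigma_\infty]$. You write that ``any $y\notin M_\infty[\Sigma_\infty]$ is moved out of $W_N$ for sufficiently late $N$ (as it would otherwise persist into the direct limit)''. But this is false as stated: if $X\in\Mmm_\kappa^{L[x]}$, then $X$ lies in \emph{every} ${<}\kappa$-ground, in particular in every $N\in\widehat{\mathscr{F}}$; it is never ``moved out''. The real question is whether the iteration maps $i_{NQ}$ \emph{fix} $X$ --- that is what yields $X\in M_\infty[*]$ via the standard calculation $\alpha\in X\iff\alpha^*\in X^*$ where $X^*=i_{N\infty}(X)$. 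Nothing about $X$ belonging to all the $W_N$ tells you $i_{NQ}(X)=X$, and ``persist into the direct limit'' conflates membership with being a fixed point.

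The paper handles this in two ways. For $\kappa$ an $x$-indiscernible (\S\ref{sec:M_1_kappa-mantle}) there is a short argument: take an elementary $j:L[x]\to L[x]$ with $\crit(j)=\kappa$; then $j(X)\in\Mmm_{j(\kappa)}^{L[x]}\sub\HOD^{L[x,G]}$, so $j(X)$ is fixed by all $i_{PQ}$, and since $j$ commutes with the $i_{PQ}$ (a computation using $\delta^Q<\kappa$ and $i_{PQ}\rest\mathscr{I}^x=\id$), also $X$ is fixed. For general $\kappa$ the paper adapts the machinery from the proof of Theorem \ref{tm:delta-cc_mantle}: assuming $X$ is \emph{not} eventually fixed, one builds a uniformly-definable cofinal subsystem $\{N_\gamma\}$ whose direct limit $N_\om[*']$ is itself a ground of $L[x]$ of the right sort (via Schindler's forcing), hence contains $X$; pulling $X$ back through the maps $j_{n\om}^+$ and using that $j_{n,n+1}^+$ moves the entire system $(N_\om[*'],j_{n\om}^+)$ to $(N_\om[*'],j_{n+1,\om}^+)$ forces $j_{n,n+1}(X)=X$ after all, a contradiction. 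This fixed-point argument is the actual content of the inclusion, and your ``tail-of-the-direct-limit analysis'' does not supply it.
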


\begin{dfn}\label{dfn:con_Turing}
	\emph{Constructible Turing determinacy}
	is the assertion that for every real $a$ and ordinal $\eta$
	and formula 
	$\varphi$
	in the language of set theory,
	either there is a cone of reals $x$ such that $L[a,x]\sats\varphi(a,\eta)$,
	or there is a cone of reals $x$ such that 
	$L[a,x]\sats\neg\varphi(a,\eta)$.
\end{dfn}

\begin{tm}\label{tm:delta-cc_mantle}
 Assume $M_1^\#$ exists and is fully iterable.
 Let $x\in\RR$ and $N$ be a non-dropping iterate
 of $M_1$ with $N\sub L[x]$
 and $\delta^{N}\leq\delta\leq\theta_0$, where $\theta_0$ is the least Mahlo cardinal of $L[x]$,
 and $\delta$ is regular in $L[x]$.
Then:
\begin{enumerate}[label=--]
 \item $\mathscr{F}=\mathscr{F}^{L[x]}_{\leq\delta}$ is directed,
\item 
$(\Mmm^{\text{int}}_\delta)^{L[x]}=(\Mmm^{\text{ext}}_\delta)^{L[x]}=
M_\infty^{\mathscr{F}}[\Sigma^{\mathscr{F}}_\infty]$,
\item  
$\delta_\infty$ is the unique Woodin cardinal of $M_\infty^{\mathscr{F}}[\Sigma^{\mathscr{F}}_\infty]$,
\item $M_\infty^{\mathscr{F}}[\Sigma^{\mathscr{F}}_\infty]$
is an internally $\delta_\infty$-cc ground of $L[x]$, via a forcing $\sub\delta_\infty$,
\item $M_\infty^{\mathscr{F}}[\Sigma^{\mathscr{F}}_\infty]$
is fully iterable,
\item either
$\delta_\infty=\delta^{+L[x]}$ or $\delta_\infty=\delta^{++L[x]}$,
and if constructible Turing determinacy holds then
$\delta_\infty=\delta^{++L[x]}$.
\end{enumerate}
\end{tm}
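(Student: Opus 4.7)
The plan is to mirror Woodin's analysis of $\HOD^{L[x,G]}$ and the proof of Theorem \ref{tm:unctbl_cof}, now indexed by the finer system $\mathscr{F}=\mathscr{F}^{L[x]}_{\leq\delta}$. First establish directedness: given $N_0,N_1\in\mathscr{F}$, full iterability of $M_1^\#$ supplies a successful coiteration on extenders with critical points above $\delta^{M_1}$. The hypothesis $\delta\leq\theta_0$ (the least Mahlo of $L[x]$), together with regularity of $\delta$ in $L[x]$, provides the reflection needed to keep the coiteration and its common iterate inside $L[x]$ with Woodin still $\leq\delta$. With directedness, form the direct limit $M_\infty=M_\infty^{\mathscr{F}}$ with iteration maps $\pi_N:N\to M_\infty$, set $\delta_\infty=\pi_N(\delta^N)$, and let $\Sigma_\infty=\Sigma^{\mathscr{F}}_\infty$ be the associated strategy fragment from \S\ref{sec:background}.

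Uniqueness of $\delta_\infty$ as the Woodin of $M_\infty$ follows from elementarity of the direct-limit maps. Full iterability of $M_\infty[\Sigma_\infty]$ is inherited by pulling iteration trees back to generating $N\in\mathscr{F}$ and copying branches back up. To realize $M_\infty[\Sigma_\infty]$ as an internally $\delta_\infty$-cc ground of $L[x]$, form Woodin's extender algebra $\BB$ at $\delta_\infty$ inside $M_\infty[\Sigma_\infty]$; via the standard genericity iteration applied to the generating iterates, $L[x]$ is a $\BB$-generic extension of $M_\infty[\Sigma_\infty]$, with $\BB$ of cardinality $\delta_\infty$ and $\delta_\infty$-cc in $M_\infty[\Sigma_\infty]$ by Woodinness.

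For the mantle equations, show $\BB$ is also $\delta_\infty$-cc in $V=L[x]$ via the extender-absorption argument (any $V$-antichain is absorbed by some extender on the $M_\infty$-sequence, bounding its size). So $M_\infty[\Sigma_\infty]$ is an externally $\delta$-cc ground of $L[x]$, which gives $\Mmm^{\text{int}}_\delta\sub\Mmm^{\text{ext}}_\delta\sub M_\infty[\Sigma_\infty]$. For the reverse, let $W$ be any internally $\delta$-cc ground of $L[x]$, say $L[x]=W[H]$ for $H$ generic over $W$ via $\PP\in W$ with $W\sats$``$\PP$ is $\delta$-cc''. Show that the entire direct-limit system $\langle N, i_{N\to N'}\rangle_{N,N'\in\mathscr{F}}$ together with $\Sigma_\infty$ lies in $W$: iterates $N\in\mathscr{F}$ are coded by bounded subsets of $\delta$, which by $\delta$-cc of $\PP$ and a Bukovsk\'y-style $\delta$-approximation argument (leveraging rigidity of $M_1$-iterates and uniqueness of the iteration strategy) are captured by nice names in $W$. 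Hence $M_\infty[\Sigma_\infty]\sub W$, collapsing the chain to equality.

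Finally, $\delta_\infty$ is a regular cardinal of $L[x]$, since cardinals are absolute between $L[x]$ and its ground $M_\infty[\Sigma_\infty]$. Upper bound: each $N\in\mathscr{F}$ is coded by a bounded subset of $\delta$ in $L[x]$, so $|\mathscr{F}|^{L[x]}\leq\delta^{+L[x]}$ and therefore $\delta_\infty=\sup_N\pi_N(\delta^N)\leq\delta^{++L[x]}$; the lower bound $\delta_\infty\geq\delta^{+L[x]}$ is immediate by regularity. Under constructible Turing determinacy, a Turing-cone argument over reals $y\geq_T x$ produces, for each $\alpha<\delta^{++L[x]}$, an iterate $N\in\mathscr{F}$ with $\pi_N(\delta^N)>\alpha$, forcing $\delta_\infty=\delta^{++L[x]}$. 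The main obstacle I expect is the capturing step above: showing that an arbitrary internally $\delta$-cc ground $W$ contains the whole direct-limit system and its strategy fragment, since this requires a careful $\delta$-approximation analysis going beyond the standard small-forcing HOD-style arguments.
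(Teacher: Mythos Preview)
Your proposal has a critical confusion between $\delta$ and $\delta_\infty$. You write that $M_\infty[\Sigma_\infty]$ is ``an externally $\delta$-cc ground of $L[x]$'' and deduce $\Mmm^{\text{ext}}_\delta\sub M_\infty[\Sigma_\infty]$. But the forcing witnessing that $M_\infty[\Sigma_\infty]$ is a ground is only $\delta_\infty$-cc, and $\delta_\infty\in\{\delta^{+L[x]},\delta^{++L[x]}\}$ is strictly larger than $\delta$; being $\delta_\infty$-cc is \emph{weaker} than being $\delta$-cc, so $M_\infty[\Sigma_\infty]$ is not known to lie in $\mathscr{G}^{\text{ext}}_{\delta\text{-cc}}$ and the inclusion does not follow. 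The paper's argument for $\Mmm\sub M_\infty[*]$ is genuinely different and harder: given $X\in\Mmm\setminus M_\infty[*]$, one shows that iteration maps must eventually move $X$, then builds a uniformly definable $\omega$-sequence $N_0\dashrightarrow N_1\dashrightarrow\cdots$ in $\widehat{\mathscr{F}}$ whose direct limit $N_\omega$, equipped with its own $*'$-map, is again an internally $\delta$-cc ground of $L[x]$ (via Schindler's subalgebra of the extender algebra, not the full extender algebra). Hence $X\in N_\omega[*']$, and pulling $X$ back through the $N_n^+$ structures yields $i_{N_nN_{n+1}}(X)=X$, a contradiction.

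For the other inclusion $M_\infty[*]\sub\Mmm$, your approximation idea is not what is used and the obstacle you anticipate is real: the iteration maps are not classes of $L[x]$, let alone of an arbitrary $W$. The paper instead proves that $\widehat{\mathscr{F}}$ is \emph{dense} in $(\mathscr{G}^{\text{int}}_{\delta\text{-cc}})^{L[x]}$: given any such ground $W$ with $W[G]=L[x]$ via $\PP$, one runs a \emph{Boolean-valued} comparison/genericity iteration inside $W$ on a $\PP$-name for an iterate of height $\delta$, producing a single iterate $N\in\widehat{\mathscr{F}}$ with $N\sub W$. Since $M_\infty[*]$ is uniformly computable inside each $N\in\widehat{\mathscr{F}}$ (this is the $x$-goodness machinery of \S\ref{sec:background}), one gets $M_\infty[*]\sub N\sub W$ for every $W$. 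Finally, your sketch for $\delta_\infty=\delta^{++L[x]}$ under constructible Turing determinacy is not an argument; the paper uses Steel's trick of almost-disjoint coding a wellorder of $\om_1$ of type $\gamma^{M_\infty}_{s_{n+1}}$ by a real, comparing $(\utilde{\delta}^1_2)^{L[x]}$ and $(\utilde{\delta}^1_2)^{L[x,y]}$, and transferring via sufficiency of the reals involved.
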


In particular, if $\delta=\om_1^{L[x]}$ in the theorem above,
then $\delta_\infty=\om_3^{L[x]}$. We do not know the identity of $\om_2^{L[x]}$ in $M_\infty^{\mathscr{F}}$,
but it is strictly bigger than its  least strong cardinal
(see \S\ref{sec:strong_cardinal}).

One can also consider strategy extensions $M_1[\Sigma]$ of $M_1$ itself.
Woodin showed that if one constructs from $M_1$ and completely
closes under the strategy $\Sigma_{M_1}$ for $M_1$,
one eventually constructs
$M_1^\#$ (in fact $L[M_1^\#]=M_1[\Sigma_\OR]$, where $\Sigma_\OR$ is specified below). 
In \S\ref{sec:strategy_extensions} we find reasonably tight bounds
on
how  much strategy is enough to reach $M_1^\#$.

\begin{dfn}\label{dfn:nice_extension}
Assume that $M_1^\#$ exists and is fully iterable.
We write $\Sigma_{M_1}$ for the unique normal (that is, $(0,\OR)$-) iteration strategy for $M_1$.
Given an $M_1$-cardinal $\kappa$ or $\kappa=\OR$, let 
$\Sigma_\kappa=\Sigma_{M_1}\rest(M_1|\kappa)$.
Say that $M_1[\Sigma_\kappa]$ is a \emph{nice extension}
iff $V_{\delta^{M_1}}^{M_1[\Sigma_\kappa]}=V_{\delta^{M_1}}^{M_1}$,
$\delta^{M_1}$ is Woodin in $M_1[\Sigma_\kappa]$,
and $\Sigma_{M_1}$ induces a (normal) iteration strategy $\Sigma^+$ for 
$M_1[\Sigma_\kappa]$.
\end{dfn}

\begin{tm}[Woodin]\label{tm:Woodin_nice_extension}
	Suppose $M_1^\#$ exists and is fully iterable.
	Let $\kappa_0$ be the least $M_1$-indiscernible.
	Then $M_1[\Sigma_{\kappa_0}]$ is a nice extension.
\end{tm}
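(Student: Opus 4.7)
The plan is to verify the three conditions in the definition of a nice extension. Write $\delta=\delta^{M_1}$. Since $M_1^\#$ is fully iterable, $\kappa_0$ is the critical point of the top extender of $M_1^\#$, so $M_1\rest\kappa_0=M_1^\#\rest\kappa_0$, $\kappa_0$ is a measurable cardinal of $M_1$, and in particular $\delta<\kappa_0$. Trees on $M_1|\kappa_0$ therefore only use extenders of $M_1$ of length $<\kappa_0$.

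I would begin by showing that $\Sigma_{\kappa_0}\in L[M_1^\#]$ and, more importantly, that $\Sigma_{\kappa_0}\rest V_\lambda\in M_1$ for every $\lambda<\delta$. For any tree $\Tt$ on $M_1|\kappa_0$, the cofinal branch $\Sigma_{\kappa_0}(\Tt)$ is uniquely determined: portions of $\Tt$ that live strictly below $\delta$ have their branches identified via a Q-structure, which exists inside $M_1|\delta$ by Woodinness of $\delta$ in $M_1$; portions extending at or above the image of $\delta$ admit a unique cofinal wellfounded branch, detectable by comparison against $M_1^\#$ using full iterability. When $\lambda<\delta$, trees of rank $<\lambda$ use only extenders on the $M_1$-sequence below $\delta$, and the associated branches and Q-structures lie in $M_1|\delta$, so $\Sigma_{\kappa_0}\rest V_\lambda\in M_1$.

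This gives condition (i): every element of $V_\delta^{M_1[\Sigma_{\kappa_0}]}$ is constructed from $M_1$ together with $\Sigma_{\kappa_0}\rest V_\lambda$ for some $\lambda<\delta$, hence from data already in $M_1$, so $V_\delta^{M_1[\Sigma_{\kappa_0}]}\sub V_\delta^{M_1}$; the reverse inclusion is immediate. For condition (ii), that $\delta$ is Woodin in $M_1[\Sigma_{\kappa_0}]$, I would argue as follows. Given $A\sub\delta$ in $M_1[\Sigma_{\kappa_0}]$, perform a genericity iteration of $M_1$ via extenders below $\delta$ to reach an iterate $N$ such that $A$ is generic over $N$ for the extender algebra at the image of $\delta$. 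Because $V_\delta$ is preserved (by (i)), the $M_1$-extenders of length $<\delta$ continue to act in $M_1[\Sigma_{\kappa_0}]$, and standard Woodin-preservation for strategy-mice transfers Woodinness of $\delta$ from $M_1$ to $M_1[\Sigma_{\kappa_0}]$: the extender on the $M_1$-sequence reflecting $A$ in the generic extension of $N$ pulls back to reflect $A$ in $M_1[\Sigma_{\kappa_0}]$.

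For condition (iii), I would define $\Sigma^+$ on a normal tree $\Tt$ on $M_1[\Sigma_{\kappa_0}]$ by translating $\Tt$ to a corresponding tree $\Tt^-$ on $M_1$ (every extender used in $\Tt$ derives from an $M_1$-extender) and setting $\Sigma^+(\Tt):=\Sigma_{M_1}(\Tt^-)$. A routine verification shows that the $\Tt$-iterates of $M_1[\Sigma_{\kappa_0}]$ along the induced branch have the form $N[\Sigma^N_\kappa]$ for the corresponding $\Tt^-$-iterate $N$ of $M_1$ (where $\kappa$ is the image of $\kappa_0$), and wellfoundedness of $\Sigma^+$-iterates is inherited from that of $\Sigma_{M_1}$-iterates. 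The main obstacle I expect is the bookkeeping for conditions (i) and (ii): carefully tracing how each level of the strategy-mouse construction of $M_1[\Sigma_{\kappa_0}]$ uses only bounded, $M_1$-internal pieces of $\Sigma_{\kappa_0}$, and verifying that the genericity iteration genuinely allows Woodinness of $\delta$ to transfer, despite $M_1[\Sigma_{\kappa_0}]$ not being a set-forcing extension of $M_1$.
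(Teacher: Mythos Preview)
Your observation that every tree in $M_1|\lambda$ for $\lambda<\delta$ is short, with Q-structure and branch computable in $M_1$, is correct. But the inference you draw from it---that therefore $V_\delta^{M_1[\Sigma_{\kappa_0}]}\sub V_\delta^{M_1}$---is a genuine gap, and in fact is the entire content of the theorem. A bounded subset $X$ of $\delta$ in $M_1[\Sigma_{\kappa_0}]=L[\es^{M_1},\Sigma_{\kappa_0}]$ is definable over some level $L_\alpha[\es^{M_1},\Sigma_{\kappa_0}]$ with $\alpha$ possibly much larger than $\kappa_0$; that definition is free to query $\Sigma_{\kappa_0}$ on trees of length approaching $\kappa_0$, not just trees in $V_\lambda$ for $\lambda<\delta$. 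So knowing that the strategy on \emph{small} trees is in $M_1$ tells you nothing about which \emph{small} sets the full $\Sigma_{\kappa_0}$ can define. Theorem~\ref{tm:nice_extensions}(\ref{item:kappa^++1_not_nice}) shows exactly this failure one step higher: a single branch for a tree of length $(\kappa_0^+)^{M_1}$ suffices to construct $M_1^\#$, an object of rank $<\delta$ not in $M_1$. Your argument for (ii) inherits the same circularity, and the appeal to ``standard Woodin-preservation for strategy-mice'' is exactly what needs to be proved.

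The paper's route (in \S\ref{sec:background}) is entirely different and is what makes the $\kappa_0$ case work. One sets up the direct limit system $\mathscr{F}$ of iterates of $M_1$ below $\kappa_0$, with limit $M_\infty$ and associated $*$-map, and proves the three conditions first for $M_\infty[*]$. For (i), any $X\sub\alpha<\delta_\infty$ in $M_\infty[*]$ is ordinal-definable there, hence fixed by all sufficiently late iteration maps $i_{PQ}$; one then pulls $X$ back to some $\bar{X}\in P$ and pushes forward to see $X=i_{P\infty}(\bar{X})\in M_\infty$. For (ii), a failure of Woodinness in $M_\infty[*]$ would give a definable $A\sub\delta_\infty$ whose preimage along the system yields, in $L[x,G]$, a nontrivial elementary $h':M'\to P$ with $P$ a ground of $L[x,G]$, contradicting the generalized Kunen inconsistency. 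Finally one identifies $M_1[\Sigma_{\kappa_0}]$ with the transitive collapse $M_1^+$ of $\Hull^{M_\infty[*]}(\rg(j))$, where $j=i_{M_1M_\infty}$, and checks $\rg(j)=M_\infty\cap\Hull^{M_\infty[*]}(\rg(j))$; conditions (i)--(iii) then transfer from $M_\infty[*]$ to $M_1^+$. The point is that the argument is global (via the directed system and stability under its maps), not local bookkeeping of which strategy fragments lie in $M_1$.
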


We will show that this is optimal, in the following sense:

\begin{tm}\label{tm:nice_extensions}
	Suppose $M_1^\#$ exists and is fully iterable.
	 Let $\kappa_0$ be the least $M_1$-indiscernible.
	 Then:
	 \begin{enumerate}
	 	\item\label{item:kappa^+_nice} $M_1[\Sigma_{(\kappa_0^+)^M}]=M_1[\Sigma_{\kappa_0}]$,
	so this extension is nice.
	\item\label{item:kappa^++1_not_nice} There is a tree $\Tt\in M_1$ on $M_1$,
	with $\lh(\Tt)=(\kappa_0^+)^{M_1}$,
	such that $M_1^\#\in M_1[b]$ where $b=\Sigma_{M_1}(\Tt)$.
	In fact, we can take $\Tt$ definable over $M_1|(\kappa_0^+)^M$.
	\end{enumerate}
	\end{tm}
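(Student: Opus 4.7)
For part (\ref{item:kappa^++1_not_nice}), let $\kappa = (\kappa_0^+)^{M_1}$. The plan is to build, definably over $M_1|\kappa$, a normal iteration tree $\Tt$ on $M_1$ of length $\kappa$ that is \emph{maximal} at its top: $\delta(\Tt) = \kappa$ will be the new Woodin of the branch model $M_b^\Tt$. At each stage $\alpha<\kappa$, pick the canonically least extender $E_\alpha$ on the sequence of $M_\alpha^\Tt$ whose length advances toward a fixed cofinal-in-$\kappa$ sequence of targets; normal iteration rules then determine the tree order. The crucial point is that at every limit $\lambda<\kappa$ we have $\delta(\Tt\rest\lambda) < \kappa$, so $\Tt\rest\lambda$ is \emph{short}: its $Q$-structure $Q(\Tt\rest\lambda)$ is a proper initial segment of some $M_\beta^\Tt$ with $\beta<\lambda$, located already inside $M_1$, and the branch $\Sigma_{M_1}(\Tt\rest\lambda)$ is identified internally by $Q$-structure analysis. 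This keeps the recursive definition of $\Tt$ inside $M_1|\kappa$.

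To recover $M_1^\#$ from $b := \Sigma_{M_1}(\Tt)$: every extender used in $\Tt$ has critical point below $i^\Tt_{0,\alpha}(\delta^{M_1})$, which stays below $i^\Tt_{0,\alpha}(\kappa_0)$ (since $\delta^{M_1} < \kappa_0$), so $\Tt$ lifts canonically to a tree $\Tt^\#$ on $M_1^\#$ with the same tree order and extender indices, and $b$ is also the $\Sigma_{M_1^\#}$-branch of $\Tt^\#$. Writing $N^\# := M_b^{\Tt^\#}$, we have $N^\#|\kappa = M_b^\Tt|\kappa$ with the top measure of $N^\#$ having critical point above $\kappa$. Since $\Tt$ is maximal with $\delta(\Tt) = \kappa$, the $Q$-structure $Q := Q(b,\Tt)$ is an initial segment of $N^\#$, uniquely characterized as the least sound iterable mouse extending $\mathcal{M}(\Tt)$ in which $\kappa$ fails to be Woodin. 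Inside $M_1[b]$ we compute $M_b^\Tt$ from $(\Tt,b)$, extract $Q$ via its uniqueness, and then un-iterate along $b^{-1}$ (inverting the lifted map $i^{\Tt^\#}_b$) to recover $M_1^\#$.

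For part (\ref{item:kappa^+_nice}), the inclusion $\supseteq$ is immediate. For $\subseteq$, I would show that for every $\Tt' \in M_1|\kappa$ the branch $\Sigma_{M_1}(\Tt')$ is definable over $M_1[\Sigma_{\kappa_0}]$. If $\Tt'$ is short, the branch is found inside $M_1$ alone by internal $Q$-structure analysis. If $\Tt'$ is maximal with $\delta(\Tt') < \kappa$, then its $Q$-structure has $M_1$-size $\leq \kappa_0$, and its iterability reduces to trees of $M_1$-size $\leq \kappa_0$ on structures that can be pulled back into $M_1|\kappa_0$ (exploiting indiscernibility of $\kappa_0$), hence is witnessed by $\Sigma_{\kappa_0}$. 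Thus in $M_1[\Sigma_{\kappa_0}]$ the correct branch through $\Tt'$ is pinned down as the unique one yielding an iterable $Q$-structure. Niceness of the extension then follows from Theorem \ref{tm:Woodin_nice_extension}.

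The main obstacle is twofold. In part (\ref{item:kappa^++1_not_nice}), one must ensure that the canonical tree $\Tt$ really reaches length $\kappa$ with $\delta(\Tt) = \kappa$ while remaining definable over $M_1|\kappa$: this demands careful coordination of the extender-length targets with the $Q$-structure bookkeeping at limits, so that no earlier maximality truncates the construction. In part (\ref{item:kappa^+_nice}), the delicate step is showing that $Q$-structure iterability for maximal subtrees of length in $[\kappa_0,\kappa)$ can be witnessed \emph{inside} $M_1[\Sigma_{\kappa_0}]$, since their $Q$-structures sit outside $M_1$ and their external strategies are not a priori in $\Sigma_{\kappa_0}$; this requires the indiscernibility-based reduction just described.
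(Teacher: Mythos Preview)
Both parts of your proposal have genuine gaps, and in each case the paper takes a quite different route.

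\textbf{Part~\ref{item:kappa^++1_not_nice}.} Your recovery of $M_1^\#$ from $b$ is circular. You want to ``extract $Q$ via its uniqueness'' as the least sound iterable mouse over $M(\Tt)$ killing the Woodinness of $\kappa$, and then uncore. But $M_1[b]$ has no iteration strategy with which to certify iterability of candidate $Q$-structures; the correct $Q$ is $i^{\Tt^\#}_b(M_1^\#)$, and distinguishing it from spurious candidates already requires $M_1^\#$-level information. The paper avoids this entirely by choosing a \emph{specific} tree: first linearly iterate the least measurable of $M_1$ out to $\kappa_0$, then perform the $M_1|\delta^{M_1}$-genericity iteration. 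This yields a maximal $\Tt$ with $\delta(\Tt)=(\kappa_0^+)^{M_1}$, and an indiscernible calculation (using that $M_1|\delta^{M_1}$ is extender-algebra generic over $P=L[M(\Tt)]$) shows $\mathscr{I}^P=\mathscr{I}^{M_1}\setminus\{\kappa_0\}$, hence $i^\Tt_b(\kappa_n^{M_1})=\kappa_{n+1}^{M_1}$ for all $n<\omega$. Since $i^\Tt_b$ is computable in $M_1[b]$ from $(\Tt,b)$, so is the sequence $\langle\kappa_n^{M_1}\rangle_{n<\omega}$, and hence $M_1^\#$. No $Q$-structure extraction is needed.

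\textbf{Part~\ref{item:kappa^+_nice}.} Your treatment of maximal $\Tt'\in M_1|(\kappa_0^+)^{M_1}$ is also problematic. By definition of maximality there is \emph{no} $Q$-structure for $M(\Tt')$ in $L[M(\Tt')]$, hence none in $M_1$; the one in $V$ is not $1$-small and is invisible to $M_1[\Sigma_{\kappa_0}]$. So ``pinning down the branch as the unique one yielding an iterable $Q$-structure'' cannot be carried out there. The paper instead argues purely with branches: first show $\delta^{M_1}\leq\cof^{M_1}(\lh(\Tt'))<\kappa_0$ (the upper bound uses an elementary $j:M_1\to M_1$ with $\crit(j)=\kappa_0$; the lower bound uses niceness of $M_1[\Sigma_{\kappa_0}]$). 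Then take an elementary $\pi:\bar M\to M_1|\eta$ with $\Tt'\in\rg(\pi)$, $\rg(\pi)$ cofinal in $\lh(\Tt')$, $\card(\bar M)<\kappa_0$, and $\crit(\pi)>\delta^{M_1}$. The pullback $\bar\Tt$ lies in $M_1|\kappa_0$, so $\bar b=\Sigma_{M_1}(\bar\Tt)\in M_1[\Sigma_{\kappa_0}]$, and $b'=\bigcup\pi``\bar b$ is a $\Tt'$-cofinal branch; uncountable cofinality of $\lh(\Tt')$ in $M_1[\Sigma_{\kappa_0}]$ gives wellfoundedness of $M^{\Tt'}_{b'}$, so $b'=\Sigma_{M_1}(\Tt')$.
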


Nice extensions are formed by adding branches for trees
which are already present in $M_1$. But one can
construct, closing under $\Sigma_{M_1}$, and ask what is the least 
$\alpha\in\OR$ such that
\[ M_1^\#\in L_\alpha[M_1|\delta^{M_1},\Sigma_{M_1}]. \]
Woodin showed that $L_{\kappa_0}[M_1|\delta^{M_1},\Sigma_{M_1}]$ does not 
add bounded subsets of $\delta^{M_1}$ or kill the Woodinness of $\delta^{M_1}$.  Here we will improve this by showing:
\begin{tm}\label{tm:closure_under_Sigma}
		Suppose $M_1^\#$ exists and is fully iterable.
	Let $\kappa_0$ be the least $M_1$-indiscernible. Then
	 $L_{\kappa_0}[M_1|\delta^{M_1},\Sigma_{\kappa_0}]$
	is already closed under $\Sigma_{M_1}$, and therefore
\[ L_{\kappa_0}[M_1|\delta^{M_1},\Sigma_{M_1}]=
L_{\kappa_0}[M_1|\delta^{M_1},\Sigma_{\kappa_0}]\sub M_1[\Sigma_{\kappa_0}].\]
\end{tm}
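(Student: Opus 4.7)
The plan is to reduce both claims of the theorem to one closure statement: $L := L_{\kappa_0}[M_1|\delta^{M_1}, \Sigma_{\kappa_0}]$ is closed under $\Sigma_{M_1}$. Granting this, the $L$-hierarchies built over $(M_1|\delta^{M_1}, \Sigma_{M_1})$ and over $(M_1|\delta^{M_1}, \Sigma_{\kappa_0})$ must coincide through stage $\kappa_0$, yielding the displayed equality. The inclusion $L \subseteq M_1[\Sigma_{\kappa_0}]$ is then immediate from the definitions, since $M_1|\delta^{M_1} \in M_1[\Sigma_{\kappa_0}]$ and $\Sigma_{\kappa_0}$ is a predicate of $M_1[\Sigma_{\kappa_0}]$.

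For the closure, fix a normal iteration tree $\Tt \in L$ on $M_1$ via $\Sigma_{M_1}$ with $\lh(\Tt)$ a limit; one needs $b := \Sigma_{M_1}(\Tt) \in L$. Since every element of $L$ has rank $<\kappa_0$, we have $\lh(\Tt) < \kappa_0$ and every extender index occurring in $\Tt$ is an ordinal $<\kappa_0$ in $V$. The key is to view $\Tt$ as a tree on the premouse $M_1|\kappa_0$, so that it falls into the domain of $\Sigma_{\kappa_0}$. By induction on $\alpha < \lh(\Tt)$, I would check that along the current branch the image $\kappa_\alpha := i^{\Tt}_{0\alpha}(\kappa_0)$ (or its natural drop-free analogue) lies strictly above the index of $E^{\Tt}_\alpha$ in $M^{\Tt}_\alpha$. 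This is automatic on non-dropping portions of the tree, because iteration maps are order-preserving, giving $\kappa_\alpha \geq \kappa_0$, which already exceeds the $V$-rank of $E^{\Tt}_\alpha$. Once the identification is established, $\Sigma_{\kappa_0}(\Tt) = \Sigma_{M_1}(\Tt) = b$, and $\Sigma_{\kappa_0}$ being a predicate of $L$ gives $b \in L$.

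The main obstacle is the inductive handling of limit stages $\lambda < \lh(\Tt)$: one must verify that the Q-structure picking $\Sigma_{M_1}(\Tt \rest \lambda)$ is an initial segment of the relevant iterate below $\kappa_\alpha$, so that the branch is recoverable from $\Sigma_{\kappa_0}$-data alone. This should follow from the fact that $\Tt \rest \lambda \in L$, together with the indiscernibility of $\kappa_0$ and its inaccessibility in $M_1$ above $\delta^{M_1}$ (which force Q-structures of length-$<\kappa_0$ trees to live below $\kappa_\alpha$). Dropping branches along $\Tt$ require a parallel but more delicate bookkeeping, using drop-preservation properties of $\Sigma_{M_1}$ that are standard in the iteration strategy literature.
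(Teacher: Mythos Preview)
Your proposal rests on a misreading of $\Sigma_{\kappa_0}$. In this paper, $\Sigma_\kappa=\Sigma_{M_1}\rest(M_1|\kappa)$ is the restriction of the strategy to those limit-length trees $\Tt$ which are \emph{elements of} $M_1|\kappa$; it is not the iteration strategy for the premouse $M_1|\kappa_0$. So the predicate available in $L=L_{\kappa_0}[M_1|\delta^{M_1},\Sigma_{\kappa_0}]$ only supplies branches for trees already lying in $M_1|\kappa_0$. The whole difficulty of the theorem is that, as one closes under this predicate, new maximal trees $\Tt\in L$ appear which need \emph{not} be in $M_1$ at all, and for these your reduction ``$\Sigma_{\kappa_0}(\Tt)=\Sigma_{M_1}(\Tt)$'' is simply undefined on the left-hand side. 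Your inductive bookkeeping about $i^\Tt_{0\alpha}(\kappa_0)$ and Q-structures does not help here: for short trees the branch is recoverable in $L[\Tt]$ anyway, and for maximal trees there is no Q-structure, so nothing in your sketch produces the branch.

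The paper's argument is genuinely different and uses the direct-limit machinery of \S\ref{sec:background}. One first finds cofinally many \emph{dl-stable} limit cardinals $\eta<\kappa_0$; for such $\eta$ the iteration map $j:M_1\to M_\infty^{\mathscr{F}_{<\eta}}$ extends to an elementary $j^+:M_1[\Sigma_\eta]\to M_\infty[\Sigma_\infty]$ with $j^+(\kappa_0)=\kappa_0$. Now any maximal $\Tt\in L$ lies in $M_1[\Sigma_\eta]|\kappa_0$ for some such $\eta$, and elementarity pushes it to $j^+(\Tt)\in M_\infty[\Sigma_\infty]|\kappa_0\subseteq M_1|\kappa_0$. \emph{This} tree is in the domain of $\Sigma_{\kappa_0}$, so its branch $c$ is available; one then pulls $c$ back along $j^+$ (using that $j\subseteq j^+$ and the uniqueness of wellfounded branches) to recover $b=\Sigma_{M_1}(\Tt)$ inside $M_1[\Sigma_\zeta]$ for some $\zeta<\kappa_0$. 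The embedding $j^+$ is precisely the device that converts a tree outside $M_1$ into one inside $M_1|\kappa_0$, and this step has no analogue in your outline.
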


After these main results, we also collect together a few other related
observations on iterates of $M_1$ and $L[x]$. In \S\ref{sec:kill_it}
we present an argument due to Schindler and the author, showing that $\sigma$-distributive forcing can destroy
the $(\om,\om_1+1)$-iterability of $M_1^\#$.

The ideas in this paper were developed in two main phases.
In the first phase,
some adaptation of Woodin's analysis of $\HOD^{L[x,G]}$
and  parts of the analysis  of the directed system $\mathscr{F}^{L[x]}_{\leq\delta}$, where $\delta=\om_1^{L[x]}$,
were worked out jointly by Steel and the author, in approximately 2010, as part of attempts to analyze $\HOD^{L[x]}$ (in particular, versions of
Claim \ref{clm:F_dense} of the proof of Theorem \ref{tm:delta-cc_mantle} were worked out),
and Steel showed that the resulting $M_\infty[\Sigma_\infty]$  is \emph{not} $\HOD^{L[x]}$,
via Claim \ref{clm:delta_infty>delta+} of that proof. At that time, Steel
and the author also
analyzed $\HOD^{L(\RR^+)}$ for certain symmetric extensions $\RR^+$ of $\RR\inter L[x]$
smaller than the traditional one, and this also involved
more local versions of Claim \ref{clm:F_dense}.

The second phase of the work was certainly motivated
by other recent work of Fuchs and Schindler \cite{vm0},
Sargsyan and Schindler \cite{vm1}, and Sargsyan, the author and Schindler \cite{vm2_v2}, analysing amongst other things mantles of mice.
Moreover, a forcing construction introduced in that context by Schindler
is exploited heavily in the proofs of Theorems \ref{tm:unctbl_cof} and \ref{tm:delta-cc_mantle}.
And the analysis of the $\kappa_0$-mantle of $M_{\swsw}$ mentioned 
earlier (see \cite{vm2_v2})
is in particular motivating and analogous to the methods here.
The new arguments in the paper from this phase were found in 2019--21.

The material in \S\ref{sec:background} is just a slight generalization of  Woodin's original analysis
and some related facts which are by now fairly well-known
and have already appeared in one form or another.

\section{Background: Directed systems based on $M_1$}\label{sec:background}

We assume the reader is familiar with the basics of inner model theory
as covered in \cite{outline}. We use notation as there
and as in \cite{premouse_inheriting}.
The main
technique we deal with is Woodin's analysis of $\HOD^{L[x,G]}$,
basically as developed in \cite{hod_as_core_model},
augmented by the calculations in \cite{Theta_Woodin_in_HOD}, and the forcing
due to Schindler \cite{vm1}.

In this section we will develop  this background material
in some detail, for self-containment and because
we need to generalize it a little. (The generalization
itself takes very little thought, however.)
So the reader might 
not need 
prior knowledge of the references just mentioned,
although they give more information.
The material in this section is essentially all standard,
with very slight adaptations to our settings.

 We say that an iteration tree is \emph{normal}
 if it is $\om$-maximal in the sense of \cite{fsit}.
 An iteration tree is \emph{terminally-non-dropping}
 iff $b^\Tt=[0,\infty]^\Tt$ exists and does not drop
 (we will only apply this terminology in the case that 
 $\Tt$ is a tree on a proper class model).
 
  A premouse $N$ is \emph{pre-$M_1$-like}
 iff $N$ is proper class, 1-small, has a (unique) Woodin cardinal.
Let $N$ be pre-$M_1$-like. Let $\Tt$ be a normal 
tree on $N$ of limit length. Then $\Tt$ is \emph{short}
iff there is $\alpha\in\OR$ such that
either $\J_\alpha(M(\Tt))$ projects ${<\delta(\Tt)}$
or $\J_\alpha(M(\Tt))$ is a Q-structure for $\delta(\Tt)$;
otherwise $\Tt$ is \emph{maximal} (that is,
$L[M(\Tt)]$ is a premouse satisfying ``$\delta(\Tt)$ is Woodin'').
We say that $N$ is \emph{normally-short-tree-iterable}
iff $N$ has a partial iteration strategy whose domain
includes all short normal trees.
A \emph{pseudo-normal iterate} of $N$
is a premouse $P$ such that either $P$ is a normal iterate of $N$
or there is a maximal normal tree $\Tt$ on $N$
such that $P=L[M(\Tt)]$.
A \emph{non-dropping pseudo-normal iterate}
is a pseudo-normal iterate which is proper class (hence pre-$M_1$-like).
A \emph{relevant finite pseudo-stack}
on $N$ is a sequence $\left<\Tt_\alpha\right>_{\alpha<n}$
for some $n<\om$ such that
there is a sequence $\left<N_\alpha\right>_{\alpha<n}$
where $N_0=N$, and for $\alpha<n$,
$N_\alpha$ is pre-$M_1$-like and
$\Tt_\alpha$ is a normal tree on $N_\alpha$,
and if $\alpha+1<n$
then either $\Tt_\alpha$ has successor length
and is terminally-non-dropping
and $N_{\alpha+1}=M_\infty^{\Tt_\alpha}$,
or $\Tt_\alpha$ is maximal and $N_{\alpha+1}=L[M(\Tt_\alpha)]$.
A \emph{non-dropping pseudo-iterate} of $N$
is a premouse $P$ such that there is a relevant
finite pseudo-stack on $N$ with last model $P$
(the trees in the stack are allowed to be trivial,
i.e. use no extenders), and $P$ is proper class (hence pre-$M_1$-like).
A \emph{relevant stack} on $N$ is a sequence 
$\left<\Tt_\alpha\right>_{\alpha<\lambda}$ such that
there is a sequence $\left<N_\alpha\right>_{\alpha<\lambda}$,
such that $\lambda\in\OR$,
 $N_0=N$, for each $\alpha<\lambda$, $N_\alpha$
 is pre-$M_1$-like and $\Tt_\alpha$ is a normal tree on $N_\alpha$,
 and if $\alpha+1<\lambda$ then $\Tt_\alpha$  has successor length,
  $b^{\Tt_\alpha}$ is non-dropping and
$N_{\alpha+1}=M^{\Tt_\alpha}_\infty$,
 and if $\eta<\lambda$ is a limit then
 $N_\eta$ is the direct limit of the $N_\alpha$ for $\alpha<\eta$
 under the iteration maps.

 A pre-$M_1$-like premouse $N$ is \emph{short-tree-iterable}
 iff there is an iteration strategy for $N$
 for relevant finite pseudo-stacks with normal
 components of arbitrary length.
 
 A premouse $N$ is \emph{$M_1$-like} iff
 \begin{enumerate}[label=(\roman*)]
  \item  $N$ is pre-$M_1$-like
 and short-tree-iterable, and
 \item\label{item:fullnorm}  every non-dropping pseudo-iterate
 of $N$ is in fact a non-dropping pseudo-normal-iterate of $N$.
 \end{enumerate}
By \cite{fullnorm}, every non-dropping iterate of $M_1$ is $M_1$-like.

 Let $\vec{\OR}=[\OR]^{<\om}\cut\{\emptyset\}$.
 Let $s\in\vec{\OR}$. Then 
$s^-=s\cut\{\max(s)\}$. 
Let $N$ be $M_1$-like with $\delta^N<\max(s)$.
 Then
 \[ \gamma^N_s=\sup(\Hull^{N|\max(s)}(\{s^-\})\inter\delta^N)\]
and
\[ H^N_s=\Hull^{N|\max(s)}(\gamma^N_s\cup\{s^-\}); \]
(note the hulls above are uncollapsed; it follows that 
$H^N_s\inter\delta^N=\gamma^N_s$).

For $M_1$-like premice $P,Q$, say $P\dashrightarrow Q$ iff
$Q$ is a pseudo-normal iterate of $P$.
Note that $\dashrightarrow$ is a partial order
(using condition \ref{item:fullnorm} above for transitivity), and if $P\dashrightarrow Q$ then
there is a unique normal tree $\Tt_{PQ}$ leading from $P$ to $Q$
such that $\Tt_{PQ}$ has no maximal proper segment.
If $P$ happens to be an iterate of $M_1$,
then let $i_{PQ}:P\to Q$ be the iteration map.
We say a set $\mathscr{D}$ of $M_1$-like premice
is \emph{directed} iff for all $P,Q\in\mathscr{D}$,
there is $R\in\mathscr{D}$ such that $P\dashrightarrow R$ and $Q\dashrightarrow R$,
and \emph{pretty good} if it is directed and some iterate of $M_1$
is in $\mathscr{D}$.

 Let $\mathscr{D}$ be a pretty good set of premice.
Let $\mathscr{F}=\{N\in\mathscr{D}\bigm|N\text{ is an iterate of }M_1\}$.
Note that $\mathscr{F}$ is directed and is dense in $\mathscr{D}$
with respect to $\dashrightarrow$,
and the iteration maps $i_{PQ}$ for $P,Q\in\mathscr{F}$
(with $P\dashrightarrow Q$) are commuting:
\[ P,Q,R\in\mathscr{F}\text{ and }P\dashrightarrow Q\dashrightarrow R\implies i_{PR}=i_{QR}\com 
i_{PQ}.\]
 Let
 \[ M_\infty^{\mathscr{F}}=\dirlim\left<P,Q;i_{PQ}\bigm|Q,P\in\mathscr{F}\text{ 
with }P\dashrightarrow Q\right>, \]
and let $i_{P\infty}$ be the direct limit map.
Then by \cite{fullnorm}, $M_\infty$
is in fact a normal iterate of $M_1$ and of each $P\in\mathscr{F}$
and $i_{P\infty}$ is just the iteration map $i_{PM_\infty}$.

Still with $\mathscr{D}$ pretty good, let $N_0\in\mathscr{D}$ and 
$s\in\vec{\OR}$.
 Then $N_0$ is \emph{$(s,\mathscr{D})$-iterable}
 iff for all $N_1,N_2,N_3\in\mathscr{D}$ with $N_0\dashrightarrow N_1\dashrightarrow N_2\dashrightarrow N_3$,
 letting $\Tt_{ij}=\Tt_{N_iN_j}$,
 $\Coll(\om,\max(s))$ forces
\begin{enumerate}
 \item  there is a $\Tt_{12}$-cofinal branch
 $b$ which fixes $s$ (meaning 
  $\max(s)\in\wfp(M^{\Tt_{12}}_b)$, $b$ does not drop, 
 and $i^{\Tt_{12}}_b(s)=s$), and
\item  whenever $b_{12},b_{23},b_{13}$ are $\Tt_{12},\Tt_{23},\Tt_{13}$-cofinal 
branches
respectively which fix $s$, we have
\[ i^{\Tt_{13}}_{b_{13}}\rest \gamma^{N_1}_s=i^{\Tt_{23}}_{b_{23}}\com 
i^{\Tt_{12}}_{b_{12}}\rest\gamma^{N_1}_s,\]
and hence
$i^{\Tt_{13}}_{b_{13}}\rest H^{N_1}_s=i^{\Tt_{23}}_{b_{23}}\com 
i^{\Tt_{12}}_{b_{12}}\rest H^{N_1}_s$.
\end{enumerate}

For $P\in\mathscr{F}$, say that $P$ is \emph{$(s,\mathscr{D})$-stable}
(or \emph{$(s,\mathscr{F})$-stable}) iff for all $Q\in\mathscr{D}$
with $P\dashrightarrow Q$ (hence $Q\in\mathscr{F}$)
we have $i_{PQ}(s)=s$. The $(s,\mathscr{D})$-stable $P$ are dense
in $\mathscr{D}$ and if
$P\in\mathscr{F}$ is $(s,\mathscr{D})$-stable
then $P$ is $(s,\mathscr{D})$-iterable.

The iteration maps $i_{PQ}$ used above are not in general
classes of $L[x]$ (considering here the case that $P,Q\in L[x]$). We now define a
\emph{covering system} $\widetilde{\mathscr{D}}$
for $\mathscr{F}$, which will be definable from $\mathscr{D}$ over any proper
class model $\mathscr{L}$ of ZF with $\mathscr{D}\in\mathscr{L}$.
Let $\widetilde{\mathscr{D}}$
be the class of pairs $(P,s)$ such that $P\in\mathscr{D}$
and $s\in\vec{\OR}$ with $\delta^P<\max(s)$
and $P$ is $(s,\mathscr{D})$-iterable.
For $(P,s),(Q,t)\in\widetilde{\mathscr{D}}$,
say $(P,s)\leq(Q,t)$
iff $P\dashrightarrow Q$ and $t\supseteq s$. This is a partial order.
Let $(P,s)\in\widetilde{\mathscr{D}}$ index
the structure $H^P_s$.
For $(P,s)\leq(Q,t)$ we have the $\Sigma_0$-elementary embedding
\[ i^{PQ}_{st}:H^P_s\to H^Q_t, \]
given by
\[ i^{PQ}_{st}=i^{\Tt_{PQ}}_b\rest H^P_s:H^P_s\to H^Q_s\elem_0 H^Q_t \]
where $b$ is any generic $\Tt_{PQ}$-cofinal branch fixing $s$,
as provided for by $(s,\mathscr{D})$-iterability.
(Note this map is independent of the choice of $b$,
and independent of $t$.)
Note that if $(P,s),(Q,t),(R,u)\in\widetilde{\mathscr{D}}$
and $(P,s)\leq(Q,t)\leq(R,u)$
then
\[ i^{PR}_{su}=i^{QR}_{tu}\com i^{PQ}_{st}.\]
And $\widetilde{\mathscr{D}}$
is directed under $\leq$, because if $(P,s),(Q,t)\in\widetilde{\mathscr{D}}$
then there is $N\in\mathscr{F}$ which is $(s\cup t,\mathscr{F})$-stable,
and then letting $R\in\mathscr{F}$ with $N,P,Q\dashrightarrow R$, we have that
$R$ is $(s\cup t,\mathscr{F})$-stable, hence
$(s\cup t,\mathscr{D})$-iterable,
so $(R,s\cup t)\in\widetilde{\mathscr{D}}$,
and $(P,s),(Q,t)\leq (R,s\cup t)$.

So we can define
\[M_\infty^{\widetilde{\mathscr{D}}}=\dirlim\left<H^P_s,H^Q_t;i^{PQ}_{st}
\bigm| (P,s),(Q,t)\in\widetilde{\mathscr{D}}\text{ and }(P,s)\leq 
(Q,t)\right>,\]
and let $\widetilde{i}_{Ps,\infty}:H^P_s\to 
M_\infty^{\widetilde{\mathscr{D}}}$
be the ($\Sigma_0$-elementary) direct limit map. Given
$\mathscr{L}\sats\ZF$ is a transitive proper class
with $\mathscr{D}\in\mathscr{L}$, then note that
over $\mathscr{L}$, from the parameter  $\mathscr{D}$, we can define
 $\widetilde{\mathscr{D}}$ and
the associated ordering $\leq$,
$H^P_s,\widetilde{i}_{Ps,Qt}\in\mathscr{L}$,
and the class functions $(P,s)\mapsto H^P_s$ and $((P,s),(Q,t))\mapsto\widetilde{i}_{Ps,Qt}$
are  $\mathscr{L}$-definable from $\mathscr{D}$, and therefore
so is $M^{\widetilde{\mathscr{D}}}_\infty$
and the system of direct limit maps. Moreover, this is uniform in $\mathscr{D},\mathscr{L}$.

Working in $V$, define
$\sigma:M_\infty^{\widetilde{\mathscr{D}}}\to M_\infty^{\mathscr{F}}$
as follows: given $(P,s)\in\widetilde{\mathscr{D}}$
and $x\in H^P_s$,
let $Q\in\mathscr{F}$ such that $P\dashrightarrow Q$
and $Q$ is $(s,\mathscr{F})$-stable, and set
\[ \sigma(\widetilde{i}_{Ps,\infty}(x))=i_{QM_\infty}(i_{Ps,Qs}(x)).\]
Note this is independent of the choice of $Q$,
and $\sigma$ is $\Sigma_0$-elementary
and $\in$-cofinal, and hence $\Sigma_1$-elementary.
Therefore $M_\infty^{\widetilde{\mathscr{D}}}$ is wellfounded
and models $\ZFC$ and $\sigma$ is fully elementary.
We next show that, in the circumstances of interest,
we have $M_\infty^{\widetilde{\mathscr{D}}}=M^{\mathscr{F}}_\infty$
and $\sigma=\id$.

For a set $A\sub\OR$ let $\left<\kappa_\alpha^A\right>_{\alpha\in\OR}$
be the increasing enumeration of the Silver indiscernibles
for $L[A]$ (these exist as $M_1^\#$ is fully iterable).
Also write $\mathscr{I}^A=\{\kappa^A_\alpha\}_{\alpha\in\OR}$.
So we have that $\mathscr{I}^A$ is the unique
$\mathscr{I}$ such that:
\begin{enumerate}
 \item $\mathscr{I}\sub\OR$ is club proper class,
 \item $\mathscr{I}$ consists of model-theoretic
 indiscernibles for the structure
 \[ (L[A],\in,A,\{c_\alpha\}_{\alpha<\sup(A)}),\]
  where $c_\alpha$ is a constant interpreted by $\alpha$,
 \item $L[A]=\Hull^{L[A]}_{\Sigma_1}(\mathscr{I}\cup\{A\}\cup(\sup A))$.
\end{enumerate}
For $\Gamma\sub\OR$ write $\vec{\Gamma}=[\Gamma]^{<\om}\cut\{\emptyset\}$.
Note that if $A\sub\om$ then
$L[A]=\bigcup_{s\in\vec{\mathscr{I}^A}}
 J^A_s$
where
\[ J^A_s=\Hull^{L_{\max(s)}[A]}(\{A,s^-\}).\]
Note this also holds if $A=(x,G)$ for some $x\sub\om$ and $G$
which is $(L[x],\PP)$-generic for some $\PP\in L_{\kappa_0^x}[x]$.
And for $M_1$-like $P$,
let $\kappa_\alpha^P=\kappa_\alpha^{P|\delta^P}$
and  $\mathscr{I}^P=\mathscr{I}^{P|\delta^P}$.
Then
$\delta^P\inter\Hull_{\Sigma_1}^P(\mathscr{I}^P)$ is cofinal in $\delta^P$ (letting $\delta$ be the supremum,
we have
\[ \delta=\delta^P\inter\Hull_{\Sigma_1}^P(\delta\cup\mathscr{I}^P),\]
and if $\delta<\delta^P$, then letting $\bar{P}$ be the
transitive collapse of this hull and $\pi:\bar{P}\to P$ the uncollapse,
then $\bar{P}|\delta=P|\delta$ and $\pi(\delta)=\delta^P$
and $\bar{P}\sats$``$\delta$ is Woodin'' and
and $\bar{P}$ is proper class, but as $\delta<\delta^P$ and $P$ is 1-small,
there is a Q-structure for $\delta$ in $\bar{P}$, a contradiction).
Note that therefore
$P$ has universe $\bigcup_{s\in\vec{\mathscr{I}^P}}H^P_s$.

Let $M_1\dashrightarrow P\dashrightarrow Q$.
Then $i_{M_1P}$ is continuous at all $M_1$-indiscernibles, 
which easily implies that
$\mathscr{I}^P=i_{M_1P}``\mathscr{I}^{M_1}$.
Therefore $\mathscr{I}^Q=i_{PQ}``\mathscr{I}^P$.

For an $M_1$-like $P$, we write $\BB^P$ for the $\delta^P$-generator extender
algebra of $P$ at $\delta^P$, constructed using extenders
$E\in\es^P$ such that $\nu_E$ is a $P$-cardinal.

Fix a real $x$ and $\kappa=\kappa_0^x$.
Say that $\mathscr{D}$ is \emph{$x$-good} (or just \emph{good},
if $x$ is fixed)
iff there are $\eta,G,\widehat{\mathscr{F}},\left<\mathscr{F}^P\right>_{P\in\widehat{\mathscr{F}}}$ such that:
\begin{enumerate}
\item either:
\begin{enumerate}[label=--]
	\item  $\eta=G=\emptyset$ and
 $\mathscr{D}\in L[x]$ and $\mathscr{D}\sub L_{\kappa}[x]$
(hence $\delta^N<\kappa$ for each $N\in\mathscr{D}$),
or \item $\eta\leq\kappa$ is a limit cardinal of $L[x]$ and $G$ is $(L[x],\Coll(\om,{<\eta}))$-generic
and $\mathscr{D}$ is the set of all $M_1$-like premice
$N$ with $N|\delta^N\in L_\eta[x,G]$.
\end{enumerate}

 \item  $\widehat{\mathscr{F}}\sub\mathscr{F}$ and $(\widehat{\mathscr{F}},\dashrightarrow)$ is cofinal
 in $(\mathscr{F},\dashrightarrow)$,
 \item  for all $P\in\widehat{\mathscr{F}}$:
 \begin{enumerate}[label=--]
  \item $x$ is $(P,\BB^P)$-generic (so $L[x]\sub P[x]\sub L[x,G\rest\gamma]$
  for some $\gamma<\eta$),
  \item $\mathscr{F}^P\in P$,
  \item  $P\in\mathscr{F}^P\sub\{Q\in P\bigm| P\dashrightarrow Q\}$,
 \item  $\mathscr{F}$ and $\mathscr{F}^P$ are 
cofinal,
  meaning that for every $Q\in\mathscr{F}^P$
  there is $R\in\mathscr{F}$ with $Q\dashrightarrow R$,
  and vice versa
 \item $P|\delta^P$ is $(Q,\BB^{Q})$-generic
  for every $Q\in\mathscr{F}^P$,
  \item $i_{PQ}(\eta)=\eta$ and
 $i_{PQ}(\mathscr{F}^P)=\mathscr{F}^Q$ for all 
$Q\in\widehat{\mathscr{F}}$
  with $P\dashrightarrow Q$.
 \end{enumerate}
\end{enumerate}

From now on, we assume that $\mathscr{D}$ is $x$-good,
witnessed by objects as above.
Let $P\in\widehat{\mathscr{F}}$
and let $Q,R\in\mathscr{F}$
with $P\dashrightarrow Q\dashrightarrow R$.
We claim:
\begin{enumerate}[label=--]
 \item $\mathscr{I}^P=\mathscr{I}^Q=\mathscr{I}^R=\mathscr{I}^x$,
\item $i_{QR}\rest\mathscr{I}^x=\id$, and therefore
\item $Q$ is $(s,\mathscr{F})$-stable and $(s,\mathscr{D})$-iterable
for each $s\in\vec{\mathscr{I}^x}$.
\end{enumerate}
For by considering the $\BB^P$-forcing relation in $P$
and since $\BB^P\sub\delta^P$ and
$L[x]=\Hull^{L[x]}_{\Sigma_1}(\mathscr{I}^x\cup\{x\})$,
it follows that
\[ P=\Hull_{\Sigma_1}^P(\mathscr{I}^x\cup\delta^P). \]
Let $\mathscr{I}'=\mathscr{I}^P\inter\mathscr{I}^x$,
which is a proper class club. Therefore 
\[ \kappa\sub\Hull^{L[x]}_{\Sigma_1}(\mathscr{I}'\cup\{x\}).\]
Since $P|\delta^P\in L[x,G]|\kappa$,
it easily follows that $\mathscr{I}^x$ are (a club class of)
model-theoretic indiscernibles for 
$(P,\in,\es^P,\{c_\alpha\}_{\alpha<\delta^P})$.
So by uniqueness, we get $\mathscr{I}^P=\mathscr{I}^x$.
Now consider $Q,R$. Let $R\dashrightarrow S\in\widehat{\mathscr{F}}$.
Then $x$ is $\BB^S$-generic
over $S$, so $\mathscr{I}^S=\mathscr{I}^x=\mathscr{I}^P$.
But $i_{PS}``\mathscr{I}^P=\mathscr{I}^S$,
so $i_{PS}\rest\mathscr{I}^P=\id$.
By commutativity, therefore 
$\mathscr{I}^Q=\mathscr{I}^R=\mathscr{I}^P=\mathscr{I}^S$
and $i_{PQ}\rest\mathscr{I}^P= i_{QR}\rest\mathscr{I}^Q=i_{RS}\rest\mathscr{I}^R=\id$.

We now claim that
$M_\infty^{\widetilde{\mathscr{D}}}= M_\infty^{\mathscr{F}}
\text{ and }\sigma=\id$.
For  this, we just need to see that $\sigma$ is surjective.
So let $x\in M_\infty^{\mathscr{F}}$.
Let $P\in\mathscr{F}$ and $\bar{x}\in P$ with
$i_{P\infty}(\bar{x})=x$.
Fix $s\in\vec{\mathscr{I}^P}$ such that
$\bar{x}\in H^P_s$. By the claim  above,
$P$ is $(s,\mathscr{F})$-stable and
$(P,s)\in\widetilde{\mathscr{D}}$,
and therefore
$\sigma(i_{Ps,\infty}(\bar{x}))=i_{P\infty}(\bar{x})=x$.
So $\sigma$ is surjective, as desired.

Now for $\alpha\in\OR$ and $(s,P)\in\widetilde{\mathscr{D}}$
with $\alpha\in s^-$ let
\[ \alpha^*=i_{Ps,\infty}(\alpha),\]
and note $\alpha\mapsto\alpha^*$ is $\mathscr{L}$-definable from $\mathscr{D}$.
Note also that 
$\alpha^*=
\min_{P\in\mathscr{F}}i_{P\infty}(\alpha)$.

Let $P\in\widehat{\mathscr{F}}$. Working in $V$,
let $M_\infty^{\Fff^P}$ be the direct limit of $\Fff^P$
under the iteration maps. By the cofinality of $\Fff$ and $\Fff^P$,
we have $M_\infty^{\Fff^P}=M_\infty^{\Fff}$,
and the associated direct limit maps $P\to M^{\Fff^P}_\infty$ and $P\to M^{\Fff}_\infty$ are
just the iteration map.
Now work in $P$. Define $\widetilde{\mathscr{F}}^P$
from $\mathscr{F}^P$, together with its
associated structures $H^P_s$ and maps $\widetilde{i}_{Ps,Qt}$,
just as 
$\widetilde{\mathscr{D}}$
is defined from $\mathscr{D}$, etc, and define
$M_\infty^{\widetilde{\mathscr{F}^P}}$
as its direct limit. Let
\[ \sigma^P:M_\infty^{\widetilde{\mathscr{F}^P}}\to 
M^{\Fff^P}_\infty \]
be as before.
Then  as before, 
$M_\infty^{\widetilde{\mathscr{F}^P}}=M_\infty^{\mathscr{F}^P}
=M_\infty^{\Fff}$ and  $\sigma^P=\id$. Likewise with $*^P$ defined in $P$
from $\widetilde{\Fff^P}$ just as $*$ was defined above, we have
 $*^P=*$.  It follows that for $P,Q\in\widehat{\Fff}$
 with $P\dashrightarrow$, we have $M_\infty^{\mathscr{F}},*$ are classes of $P$
 and $i_{PQ}((M_\infty^{\mathscr{F}},*))=(M_\infty^{\mathscr{F}},*)$.

Let $N=M_\infty^{\mathscr{F}}$.
Let $\mathscr{F}^{N}=i_{P\infty}(\mathscr{F}^P)$
for any $P\in\widehat{\mathscr{F}}$ (note this is independent
of such $P$), and likewise $\widetilde{\Fff^N}$.
Note that 
\[ 
\mathscr{I}^{N}=i_{M_1N}``\mathscr{I}^{M_1}=
\{\mu^*\bigm|\mu\in\mathscr{I}^x\}.\]
(We might not have $\mathscr{I}^{M_1}=\mathscr{I}^x$;
for example, if $M_1^\#\in L[x]$ then $\kappa_0^{M_1}<\om_1^{L[x]}$.)
By $x$-goodness,
every $M\in\mathscr{F}^N$ is such that
$N\dashrightarrow M$ and
$\delta^{M}<\kappa_0^{N}=(\kappa_0^x)^*$
and $N|\delta^N$
is $(M,\BB^{M})$-generic. So calculations
with indiscernibles like before give that 
$i_{NM}\rest\mathscr{I}^{N}=\id$. It follows
that $M_\infty^{\widetilde{\Fff^N}}=M_\infty^{\Fff^N}$
and the natural map $\sigma^N:M_\infty^{\widetilde{\Fff^N}}\to 
M_\infty^{\Fff^N}$
(defined as before) is just $\sigma^N=\id$.

Let $k=i_{NM_\infty^{\Fff^N}}:N\to M_\infty^{\Fff^N}$ be the iteration map.
We claim  $\alpha^*=k(\alpha)$ for all $\alpha\in\OR$.
For let $P\in\widehat{\Fff}$ be $(\alpha,\Fff)$-stable,
with $\alpha\in\rg(i_{P\infty})$, and let $\bar{\alpha}\in P$
with $i_{P\infty}(\bar{\alpha})=\alpha$,
and let
$s\in\vec{\mathscr{I}^x}=\vec{\mathscr{I}^P}$
with $\bar{\alpha},\alpha\in H^P_s$. Then $P\sats$``$(s,P)\in\widetilde{\Fff^P}$
and $\alpha=\widetilde{i}_{Ps,\infty}(\bar{\alpha})$'',
so $N=M_\infty\sats$``$(s^*,N)\in\widetilde{\Fff^N}$
and $\alpha^*=\widetilde{i}_{Ns^*}(\alpha)=\alpha^*$''
(where $s^*,\alpha^*$ are just parameters), but since $s^*\in\mathscr{I}^{N}$ 
 and $N$ is $(s^*,\mathscr{F}^{N})$-stable,
 therefore 
$k(\alpha)=\widetilde{i}_{Ns^*}(\alpha)=\alpha^*$.
 
So
 $M_\infty[*]=L[\es^{M_\infty},*]=L[\es^{M_\infty},k]$,
 and moreover,
 $M_\infty[*]=M_\infty[\Sigma_\infty]$,
 where $\Sigma_\infty=\Sigma_{M_1}\rest\Fff^N$,
 i.e. $\Sigma_\infty$ encodes the correct branches through all trees in $\Fff^N$
 (in a canonical manner, without encoding extra information).
 For we clearly have $M_\infty[*]\sub M_\infty[\Sigma_\infty]$. 
 The converse is a consequence of the following fact. Let $M_1\dashrightarrow
 P\dashrightarrow Q\dashrightarrow R$,
 and suppose we know $\ell=i_{PR}$, and want
 to compute $i_{PQ}$ and $i_{QR}$ from $\ell,P,Q,R$.
 Let $H$ be $(V,\Coll(\om,\alpha))$-generic
 for some $\alpha\in\OR$,
 and let $(b,c)\in V[H]$ be such that $b$
 is a  $\Tt=\Tt_{PQ}$-cofinal branch
 and $c$ a $\Uu=\Tt_{QR}$-cofinal branch
 with $i^\Uu_c\com i^\Tt_b\rest P|\delta^P\sub i_{PR}$.
 We claim then that $b=\Sigma_{M_1}(\Tt)$
 and $c=\Sigma_{M_1}(\Uu)$, and hence we can compute $b,c$
 from the given data, as desired. For if
 $c\neq d=\Sigma_{M_1}(\Uu)$,
 then by the Zipper Lemma, $\rg(i^\Uu_c)\inter\rg(i^\Uu_d)$
 is bounded in $\delta(\Uu)$,
 although this is false, because $i^\Uu_c\com i^\Tt_b\sub i_{PR}$.
 So $c=\Sigma_{M_1}(\Uu)$.
 Now suppose $b\neq d=\Sigma_{M_1}(\Tt)$.
 Then $\rg(i^\Tt_b)\inter\rg(i^\Tt_d)$ is bounded in $\delta(\Tt)$,
 but then $\rg(i^\Uu_c\com i^\Tt_b)\inter\rg(i^\Uu_c\com i^\Tt_d)$
 is bounded in $\delta(\Uu)$, which is again false, completing the proof.
 
 Let $\delta_\infty=\delta^{M_\infty}$.
 We claim
$V_{\delta_\infty}^{M_\infty[*]}=V_{\delta_\infty}^{M_\infty}$.
For let $X\sub\alpha<\delta_\infty$ with $X\in M_\infty[*]$.
Then there is $\eta\in\OR$ and a formula
$\varphi$ such that
$\beta\in X$ iff $M_\infty[*]\sats\varphi(\eta,\beta)$,
for each $\beta<\alpha$. It follows
that there is $P\in\widehat{\Fff}$ such that
$i_{PQ}(X)=X$ for all $Q\in\widehat{\Fff}$
with $P\dashrightarrow Q$, and $\alpha\in\rg(i_{P\infty})$. Fix such a $P$, and let 
$s\in\vec{\mathscr{I}^x}$
with $\alpha<\sup i_{P\infty}``\gamma^P_s$.
Let $\bar{\alpha}<\gamma^P_s$ with $i_{P\infty}(\bar{\alpha})=\alpha$.
In $P$, define
\[ 
\bar{X}=\{\bar{\beta}<\bar{\alpha}\bigm|\widetilde{i}_{Ps,\infty}(\bar{\beta}
)\in X\}.\]
Then $i_{P\infty}(\bar{X})=X$. 
For if $\beta\in\alpha\inter\rg(i_{P\infty})$,
then clearly $\beta\in X$ iff $\beta\in i_{P\infty}(\bar{X})$.
And for other $\beta<\alpha$,
we can find some $Q\in\widehat{\Fff}$
with $P\dashrightarrow Q$ and $\beta\in\rg(i_{Q\infty})$,
and then because $i_{PQ}(X)=X$,
we still get that $\beta\in X$ iff $\beta\in 
i_{P\infty}(\bar{X})=i_{Q\infty}(i_{PQ}(\bar{X}))$.
This proves the claim.

Let $M_1\dashrightarrow M\dashrightarrow P\in\widehat{\Fff}$
with
$\mathscr{F}^P\in\rg(i_{MP})$.
Let $j=i_{MM_\infty}$ and 
 $H=\Hull_{1}^{M_\infty[*]}(\rg(j))$.
We claim 
$H\inter M_\infty=\rg(j)$.
For let $\alpha\in\OR\inter H$;
we must see that $\alpha\in\rg(j)$.
Now $\alpha$ is definable over $M_\infty[*]$
from $j(t,\gamma)$ for some $t\in[\mathscr{I}^{M}]^{<\om}$ and $\gamma<\delta^M$.
Note then that $\alpha$ is definable over $P$ from $i_{MP}(t,\gamma)$,
uniformly in $P\in\widehat{\Fff}$ with $M\dashrightarrow P$.
It follows that $\alpha^*=k(\alpha)$ is
definable over $N=M_\infty$ from $j(t,\gamma)$,
recalling that $k:N\to M_\infty^{\Fff^N}$ is the iteration map,
and so $k(\alpha)\in\rg(j)$.
But there is $u\in\vec{\mathscr{I}^N}$
with $\alpha\in H^N_u$, and note that $\widetilde{i}_{Nu,\infty}\in\rg(j)$
and $\widetilde{i}_{Nu,\infty}=k\rest H^N_u$.
But since $\alpha^*=k(\alpha)\in\rg(j)$, therefore $\alpha\in\rg(j)$ also.

Letting $M^+$ be the transitive collapse of $H$
and $j^+:M^+\to M_\infty^{\Fff}[*]$ the uncollapse map,
it follows that $M\sub M^+$ and $j\sub j^+$,
and $V_{\delta^M}^M=V_{\delta^M}^{M^+}$.

We now claim that $\delta_\infty$ is Woodin
in $M_\infty^{\Fff}[*]$ (equivalently,
$\delta^M$ is Woodin in $M^+$). Suppose not and let $A\in\pow(\delta_\infty)\inter M_\infty^{\Fff}[*]$ be a counterexample to the Woodinness of $\delta_\infty$
there; note that we can take $A\in\rg(j^+)$. 
Write $j^+(A^M)=A$. Let $P\in\widehat{\Fff}$ with $M\dashrightarrow P$
and $M\neq P$. Define $P^+$ and $i^+:P^+\to M_\infty^\Fff$ analogously
to $M^+$ and $j^+$, and $h^+:M^+\to P^+$ such that
$i^+\com h^+=j^+$. Consider the amenable structure $(M|\delta^M,A^M)$.
Note that we can convert
$\Tt_{MM_\infty}\conc b$ (where $b$ is the correct $\Tt_{MM_\infty}$-cofinal branch)
into a tree $\Tt_{MM_\infty}'\conc b$ on $(M|\delta^M,A^M)$ with last model
$(M_\infty|\delta_\infty,A)$ and iteration map $j\rest M|\delta^M$.
Moreover, since $\Tt_{MM\infty}$ is normal,
a slight variant of the proof of the Zipper Lemma shows that
$(b,A^M)$ is the unique pair $(c,C)$ such that this works,
in any universe which contains such a $(c,C)$. By absoluteness
and homogeneity of collapse forcing,
it follows that $(b,A^M)\in L[x,G]$,
and hence $L[x,G]$ computes $j$. Similarly, $L[x,G]$ computes $i$
and $h$. But $P\in\widehat{\mathscr{F}}$,
so $P$ is a set-ground of $\mathscr{L}$, but by \cite{gen_kunen_incon},
there can be no transitive proper class $M'$
and elementary $h':M'\to P$
such that $(M',h')$ is a class of $L[x,G]$,
 a contradiction.

 We now show that $M_\infty[*]$ is a ground
 of $L[x]$ for a forcing $\CC\sub\delta_\infty$
 which has the $\delta_\infty$-cc in $M_\infty[*]$;
 the argument here is  due to Schindler (though in some cases
 one can use more traditional methods using Vopenka).
 (We define the forcing slightly differently to how Schindler
 does, but it is equivalent.)
 Writing $N=M_\infty$,
 we have  $k:N\to M_\infty^{\Fff^N}$ is a class of $M_\infty[*]$.
 Working in $M_\infty[*]$, let $\mathscr{L}$ be the proper class infinitary Boolean language, given by a starting with a collection
 $\{v_n\}_{n<\om}$
 of propositional variables, and closing under negation
 and arbitrary set-sized disjunctions.
 Then define $\CC$ to be the subalgebra of the $\om$-generator extender algebra
 $\BB=\BB^{M_\infty}_\om$ of $M_\infty$ with conditions
 \[ \CC=\{\|k(\varphi)\|_{\BB}\bigm|\varphi\in\mathscr{L}\}, \]
 where $\|\psi\|_{\BB}$ denotes the Boolean value of a statement
 with respect to $\BB$, and we interpret
 $\left<v_n\right>_{n<\om}$ as the generic real for $\BB$. Since $V_{\delta_\infty}^{M_\infty}=V_{\delta_\infty}^{M_\infty[*]}$ and $\delta_\infty$ is Woodin
 in $M_\infty[*]$, $\BB$ is a complete Boolean algebra
 in $M_\infty[*]$ with the $\delta_\infty$-cc there, so this all makes sense,
 and for $\psi=k(\varphi)$, since $\psi\in M_\infty$,
 we have $(\|\psi\|_\BB)^{M_\infty}=(\|\psi\|_\BB)^{M_\infty[*]}$.
 It easily follows that $\CC$ is also a complete Boolean
 algebra with the $\delta_\infty$-cc in $M_\infty[*]$,
 and $\CC\sub\delta_\infty$. We claim that $x$ is $M_\infty[*]$-generic
 for $\CC$, with generic filter
 \[ G_x=\{\|k(\varphi)\|_\BB\bigm| \varphi\in\mathscr{L}\wedge x\sats\varphi\}. \]
 For $G_x$ is easily a filter. For genericity,
 let $\left<\varphi_\alpha\right>_{\alpha<\lambda}\in M_\infty[*]$ be
 such that $\left<\|k(\varphi_\alpha)\|_\BB\right>_{\alpha<\lambda}$
 is a maximal
 antichain of $\CC$. So $\lambda<\delta_\infty$.
 Let $\psi=\bigvee_{\alpha<\lambda}\varphi_\alpha$, and note that
 $\|\psi\|_{\BB}=\bigvee_{\alpha<\lambda}\|\varphi_\alpha\|_{\BB}$.
 We want to see $x\sats\psi$.
Suppose $x\sats\neg\psi$.
Let $P\in\widehat{\Fff}$ with $P$ being $(\psi,\mathscr{D})$-stable;
that is, $P$ is $(\alpha_\psi,\mathscr{D})$-stable where $\alpha_\psi$ is the rank of $\psi$ in the order of constructibility of $M_\infty[*]$.
Since $x$ is extender algebra generic over $P$ and $\psi\in P$,
if $x\sats\neg\psi$ then
$P\sats\|\neg\psi\|_{\BB^P}\neq 0$,
and therefore $M_\infty\sats\|k(\neg\psi)\|_{\BB}\neq 0$,
and so $\|k(\neg\psi)\|_{\BB}$ is a non-zero condition in $\CC$.
But it is easy to see that $\|k(\neg\psi)\|_{\BB}\incompat \|k(\varphi_\alpha)\|_{\BB}$ for each $\alpha<\lambda$, contradicting maximality.
 
 As an immediate corollary, $\delta_\infty$ is a regular cardinal
 in $M_\infty[*][x]$, and hence also in $L[x]$.
 
***To add: $M_\infty[\Sigma_\infty]$ is fully iterable (but
the proof is standard; see \cite{hod_as_core_model}
or \cite{Theta_Woodin_in_HOD}).

Now fix a limit cardinal $\eta_0$ of 
$L[x]$
 with $\eta_0\leq\kappa_0^x$ and $M_1|\delta^{M_1}\in L[x]|\eta_0$.
Let $G_0$ be $(L[x],\Coll(\om,{<\eta_0}))$-generic, and
 $\HC^+=\bigcup_{\alpha<\eta_0}\HC^{L[x,G_0\rest\alpha]}$.
The usual homogeneity argument shows that $\HC^+=\HC^{L(\HC^+)}$,
and in fact,
\[ \HC^+=\HC^{\HOD^{L[x,G_0]}_{\HC^+}}.\]
Let $\RR^+=\HC^+\inter\RR=L(\HC^+)\inter\RR=L(\RR^+)\inter\RR$.

Write $\mathscr{D}^{G_0}$ for the set of $M_1$-like
premice $N$ with $N|\delta^N\in\HC^+$.
Let $\mathscr{D}=\mathscr{D}^{G_0}$ and $\mathscr{F}=\mathscr{F}_{\mathscr{D}}$.
Let us show that $\mathscr{D}$ is $x$-good.
Note that $\mathscr{D},\mathscr{F}$ are closed under psuedo-comparison
and pseudo-$x$-genericity iteration.
Note that for each $N\in\mathscr{F}$
with $x$ extender algebra generic
over $N$, there is $G'$ which is $(N,\Coll(\om,{<\eta_0}))$-generic
such that $N[G']=L[x,G_0]$. Also, for such $N$,
$\delta^N$ is a regular cardinal of $L[x]$
with $\delta^N<\eta_0$, and since there is $\alpha<\eta_0$
with $N|\delta^N\in L[x,G_0\rest\alpha]$,
we have $L[x]\sub N[x]\sub L[x,G_0\rest\alpha]$,
so eventually all $L[x]$-cardinals $<\eta_0$ are also $N$-cardinals,
so $\eta_0$ is a limit cardinal of $N$. 
To define $\widehat{\Fff}$ and $\left<\Fff^P\right>_{P\in\widehat{\Fff}}$
(witnessing $x$-goodness), fix $P_0\in\Fff$ such that $P_0$ is $(\eta_0,\Ddd)$-stable,
and let $\widehat{\Fff}$ be the set of all $P\in\Fff$
such that $P_0\dashrightarrow P$ and $x$
is extender algebra generic over $P$,
and for such $P$, let $\Fff^P$ be the set of all
$Q$ such that $P\dashrightarrow Q$, $Q|\delta^Q\in P|\eta_0$,
and $P|\delta^P$ is extender algebra generic over $Q$.

We claim these objects witness $x$-goodness.
For the choice of $P_0$ ensures that $i_{PQ}(\Fff^P)=\Fff^Q$ for all
$P,Q\in\widehat{\Fff}$ with $P\dashrightarrow Q$.
Clearly $\widehat{\Fff}\sub_{\mathrm{cof}}\Fff$, and $\Fff,\mathscr{F}^P$
are cofinal because  $\Fff^P\sub\Fff$, and the cofinality of $\Fff^P$
in $\Fff$ is by  Boolean-valued
comparison: Let $Q\in\mathscr{F}$
and $\dot{Q}\in P$ be a $\Coll(\om,\beta)$-name
for $Q$ where $\beta<\eta$, such that
$\Coll(\om,\beta)$ forces that $\dot{Q}$ is $M_1$-like.
Working in $P$,
we can form an iteration tree on $P$, ``comparing''
$P$ with all interpretations of names for $\dot{Q}$.
Because $\eta$ is a limit cardinal in $P$,
this produces an iterate $P'$ of $P$ with $\delta^{P'}<\eta$,
and such that it is forced by $\Coll(\om,\beta)$
that $\dot{Q}\dashrightarrow P'$, and hence $Q\dashrightarrow P'$.
We can then iterate $P'\dashrightarrow P''\in\Fff^P$.
We prove a more subtle
variant of this in detail in Claim \ref{clm:F_dense}.

We now observe that $\HOD^{L(\RR^+)}$ has universe that of $M_\infty[*]$.
For $\mathscr{D}$, and hence $\widetilde{\Ddd}$,
are defined in $L(\RR^+)$ without parameters,
so $M_\infty[*]\sub\HOD^{L(\RR^+)}$.
Now let $\xi\in\OR$ and $X\in\pow(\xi)\inter\HOD^{L(\RR^+)}$
and a formula $\varphi$ and $\gamma\in\OR$
such that $\alpha\in X$ iff $L(\RR^+)\sats\varphi(\gamma,\alpha)$.
We want to see $X\in M_\infty[*]$. But given any $P\in\widehat{\Fff}$,
writing $\CC_{\eta_0}=\Coll(\om,{<\eta_0})$
and $\dot{\RR}_{\eta_0}^+$ the natural name for $\RR^+$,
note that
\[ \alpha\in X\iff P\sats\ \forces_{\CC_{\eta_0}} L(\dot{\RR}_{\eta_0}^+)\sats\varphi(\gamma,\alpha). \]
But for cofinally many $P\in\widehat{\Fff}$, $P$ is $((\gamma,\alpha),\mathscr{D})$-stable (and recall $P$ is $(\eta_0,\mathscr{D})$-stable by definition), so
\[ \alpha\in X\iff M_\infty\sats\ \forces_{\CC_{\eta_0^*}}
 L(\dot{\RR}_{\eta_0^*}^+)\sats\varphi(\gamma^*,\alpha^*).\]
Therefore $X\in M_\infty[*]$, as desired.

Clearly $\eta_0$ is the least measurable of $M_\infty$.
We now want to observe that $\delta_\infty=(\eta_0^+)^{L[x]}$.
For $L[x,G_0]$ has a surjection $\eta_0\to\sup i_{M_1M_\infty}``\gamma^{M_1}_s$,
for each $s\in\vec{\mathscr{I}^{M_1}}$,
by restricting the system $\mathscr{D}$ to pairs $(s',P)$
for some appropriate fixed $s'$. Therefore $\delta_\infty\leq(\eta_0^+)^{L[x,G]}=(\eta_0^+)^{L[x]}$.  But we saw above that $\delta_\infty$ is regular in $L[x]$,
and since $\eta_0<\delta_\infty$, therefore $\delta_\infty=(\eta_0^+)^{L[x]}$.

Say that $\eta_0$ is \emph{dl-stable} (with respect to
the current direct limit system) iff
 $i_{M_1P}(\eta_0)=\eta_0$ for all $P\in\mathscr{F}$.
 Suppose $\eta_0$ is dl-stable. Let $j=i_{M_1M_\infty}:M_1\to M_\infty$.
 Recall we defined $M_1^+$ and $j^+:M_1^+\to M_\infty[\Sigma_\infty]$
 earlier.
 So letting $\delta=\delta^{M_1}$,
 we have $M_1|\delta=M_1^+|\delta$ and $V_\delta^{M_1}=V_\delta^{M_1^+}$
 and $\delta$ is Woodin in both.
 Moreover, $M_1^+$ is iterable,
 via the strategy of $M_1$.
 Moreover, if $\Tt$ is a normal tree on $M_1$, of successor length,
 and $b^\Tt$ does not drop, and $\Tt^+$ is the corresponding
 tree on $M_1^+$, then $M^\Tt_\infty=i^{\Tt^+}(M_1)$,
 so $M^\Tt_\infty\sub M^{\Tt^+}_\infty$,
 and $i^\Tt\sub i^{\Tt^+}$,
 and the natural copy map $\pi_\infty:M^\Tt_\infty\to M^{\Tt^+}_\infty$
 is just the identity.
 The iterability of $M_1^+$ was shown in \cite{hod_as_core_model},
 and the extra facts were shown in \cite{Theta_Woodin_in_HOD}
 and \cite{vm1}.

\section{The $\kappa$-mantle for indiscernible $\kappa$}\label{sec:M_1_kappa-mantle}

In this section we analyze the 
following two related $\kappa$-mantles:
\begin{enumerate}[label=--]
	\item the $\kappa$-mantle of $L[x]$, where $\kappa$ is an
	$x$-indscernible, for a real $x$ with $M_1|\delta^{M_1}\in L_\kappa[x]$, and
 \item the $\kappa$-mantle of $M_1$, where $\kappa$ is an
$M_1$-indiscernible,
\end{enumerate}
proving Theorem \ref{tm:unctbl_cof} in the case that $\eta=\kappa_\alpha^x$
is an $L[x]$-indiscernible; the following theorem is by essentially the same proof, which we leave to the reader:

\begin{tm}\label{thm:M_1_kappa-mantle}
	Assume that $M_1^\#$ exists and is $(0,\OR,\OR)$-iterable.
	Let $\kappa=\kappa_\alpha^x$ be an $M_1$-indiscernible. Then $\Mmm_\kappa^{M_1}$
	is a fully iterable strategy mouse which models $\ZFC$.
\end{tm}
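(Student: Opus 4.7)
The plan is to run the proof of Theorem \ref{tm:unctbl_cof} with $M_1$ in place of $L[x]$ and the $M_1$-indiscernible $\kappa$ in place of $\kappa_\alpha^x$. The framework of \S\ref{sec:background} was developed generally enough that almost all of it transfers: the only serious change is that the real $x$, whose role in the $L[x]$-analysis was to code $M_1|\delta^{M_1}$ and to serve as a canonical extender-algebra generic for iterates of $M_1$, is here replaced by a real $r$ (living in the symmetric $\HC$) coding $M_1|\delta^{M_1}$, where the symmetric $\HC$ is taken with respect to a $(M_1,\Coll(\om,{<\kappa}))$-generic $G$.

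First I would define $\mathscr{F}=\mathscr{F}^{M_1}_{<\kappa}$ to be the set of non-dropping iterates $N$ of $M_1$ with $N|\delta^N\in M_1|\kappa$, and verify directedness by pseudo-comparison. Since $\kappa$ is an $M_1$-indiscernible it is inaccessible in $M_1$, so this system is plentiful. Let $G$ be $(M_1,\Coll(\om,{<\kappa}))$-generic, $\RR^+=\bigcup_{\alpha<\kappa}\RR\cap M_1[G\!\rest\!\alpha]$ and $\HC^+$ analogously; set $\mathscr{D}$ to be the class of $M_1$-like $N$ with $N|\delta^N\in\HC^+$. Fix a real $r\in\HC^+$ coding $M_1|\delta^{M_1}$, and reformulate the $x$-goodness of \S\ref{sec:background} as $r$-goodness: let $\widehat{\mathscr{F}}$ consist of those $P\in\mathscr{F}$ with $r$ being $\BB^P$-generic over $P$ and $P$ being $(\kappa,\mathscr{D})$-stable, and define $\mathscr{F}^P$ accordingly by Boolean-valued pseudo-comparison performed inside $P$, just as sketched in the $L[x]$-case. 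The key indiscernibles computation $\mathscr{I}^P=\mathscr{I}^r$ for $P\in\widehat{\mathscr{F}}$ goes through as before, using that $r$ codes enough of $M_1$ to make the Silver indiscernibles of $L[r]$ agree with $\mathscr{I}^{M_1}$ on a tail.

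With $r$-goodness established, the rest of the analysis of \S\ref{sec:background} applies verbatim: the covering system $\widetilde{\mathscr{D}}$ is $L(\RR^+)$-definable, one obtains $M_\infty^{\widetilde{\mathscr{D}}}=M_\infty^{\mathscr{F}}$ and $\sigma=\id$, and $M_\infty[*]=M_\infty[\Sigma_\infty]$ with $V_{\delta_\infty}^{M_\infty}=V_{\delta_\infty}^{M_\infty[\Sigma_\infty]}$ and $\delta_\infty$ Woodin in $M_\infty[\Sigma_\infty]$. Schindler's extender-algebra subalgebra $\CC\subseteq\delta_\infty$ then exhibits $M_\infty[\Sigma_\infty]$ as an internally $\delta_\infty$-cc ground of $M_1[r]$, and hence of $M_1$, via a forcing $\subseteq\delta_\infty$, which gives $M_\infty[\Sigma_\infty]\subseteq\Mmm_\kappa^{M_1}$; the matching inclusion $\Mmm_\kappa^{M_1}\subseteq\HOD^{L(\RR^+)}=M_\infty[\Sigma_\infty]$ follows by homogeneity of collapse forcing together with the $\HOD^{L(\RR^+)}$-computation at the end of \S\ref{sec:background}. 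Full iterability of $M_\infty[\Sigma_\infty]$ is then inherited from the iterability of $M_1^+$, using \cite{HOD_as_core_model} and \cite{Theta_Woodin_in_HOD}, and $\ZFC$ holds in $M_\infty[\Sigma_\infty]$ because it holds in every direct-limit strategy extension of this type.

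The principal obstacle I expect is the careful verification of $r$-goodness inside $M_1$: one must check that $\widehat{\mathscr{F}}$ is cofinal in $\mathscr{F}$, that each $\mathscr{F}^P$ is cofinal in the pseudo-iterates of $P$ below $\kappa$, and that the assignment $P\mapsto\mathscr{F}^P$ is coherent under the iteration maps $i_{PQ}$ for $P\dashrightarrow Q$ in $\widehat{\mathscr{F}}$. These are exactly the standard Boolean-valued coiteration and genericity-iteration arguments sketched in \S\ref{sec:background}, so once they are adapted to $r$ in place of $x$ the remainder of the proof is formal.
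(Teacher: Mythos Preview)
Your adaptation of the \S\ref{sec:background} setup is reasonable, but neither of the two key inclusions is justified by the argument you give, and the second is a genuine gap.

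For $M_\infty[\Sigma_\infty]\subseteq\Mmm_\kappa^{M_1}$: Schindler's subalgebra $\CC$ is a forcing of size $\delta_\infty=(\kappa^+)^{M_1}>\kappa$, so exhibiting $M_\infty[\Sigma_\infty]$ as a $\delta_\infty$-cc ground of $M_1$ does \emph{not} place it among the ${<\kappa}$-grounds. The paper's argument here is different: one shows that $\widehat{\mathscr{F}}\cap M_1$ is dense (under $\subseteq$) in the class of ${<\kappa}$-grounds of $M_1$, via Boolean-valued comparison inside an arbitrary such ground $W$; and each $P\in\widehat{\mathscr{F}}$ computes $M_\infty[*]$ uniformly from the parameter $\kappa$, so $M_\infty[*]\subseteq\bigcap_{P}P=\Mmm_\kappa^{M_1}$. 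This is a minor repair.

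The real gap is the reverse inclusion $\Mmm_\kappa^{M_1}\subseteq M_\infty[\Sigma_\infty]$. Your appeal to ``homogeneity of collapse forcing together with the $\HOD^{L(\RR^+)}$-computation'' does not yield this: homogeneity tells you $\HOD^{L(\RR^+)}\subseteq M_1$, but gives no reason why an arbitrary $X\in\Mmm_\kappa^{M_1}$ should be ordinal-definable in $L(\RR^+)$. Said differently, $\HOD^{L(\RR^+)}=M_\infty[*]$ is only a ${<\kappa^{++}}$-ground of $M_1$, so $\Mmm_\kappa^{M_1}\subseteq\HOD^{L(\RR^+)}$ is simply not immediate. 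The paper's method here is the whole point of this section and is missing from your outline: take the indiscernible embedding $j:M_1\to M_1$ with $\crit(j)=\kappa$. Then $j(X)\in\Mmm_{j(\kappa)}^{M_1}\subseteq M_\infty[*]$, since now $\delta_\infty<j(\kappa)$ and $M_\infty[*]$ \emph{is} a ${<j(\kappa)}$-ground. So $j(X)$ is fixed by $i_{PQ}$ for suitably stable $P\dashrightarrow Q$ in $\widehat{\mathscr{F}}$. One then verifies the commutativity $j\circ i_{PQ}=i_{PQ}\circ j$ (using that $i_{PQ}$ fixes $\mathscr{I}^{M_1}$ pointwise while $j$ sends indiscernibles to indiscernibles, and both are the identity below $\delta^Q$), and injectivity of $j$ gives $i_{PQ}(X)=X$. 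The standard calculation then places $X\in M_\infty[*]$. This $j$-commutativity step is exactly what distinguishes the indiscernible case from the general limit-cardinal case treated later, and it cannot be replaced by a bare homogeneity appeal.
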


We will prove more general theorems later,
but in this special case there is a different, and simpler,
argument, so we present this first.
\begin{proof}[Proof of Theorem \ref{tm:unctbl_cof} when $\eta=\kappa_\alpha^{x}$]
	We start with the case of $\eta=\kappa_0^x$.
We continue with the setup of \S\ref{sec:background}, in the case of
$\mathscr{D}$ consisting of the $M_1$-like
premice $N$ with $N|\delta^N\in L_{\kappa}[x,G]$,
where $\kappa=\kappa_0^x$.

We first show $M_\infty[*]\sub\Mmm_\kappa^{M_1}$,
a fact which is not new. We have $\Ddd,\Fff,\widehat{\Fff}$ as in \S\ref{sec:background}.
Now $\widehat{\Fff}\inter L[x]$
is dense in the ${<\kappa}$-grounds of $L[x]$.
For let $W\sub M$ be a ${<\kappa}$-ground of $L[x]$.
Using a Boolean-valued comparison argument in $W$,
we can compute a non-dropping iterate $N$ of $M_1$ with
$N|\delta^N\in W$ and $\delta^N<\kappa$.
Let $\PP\in V_\kappa^W$ and $g$ be $(W,\PP)$-generic,
with $W[g]=L[x]$.
Let $\gamma<\kappa$ with $\PP\in V_\gamma^W$.
Then working in $W$, iterate $N\dashrightarrow P$ so that it is forced by $\PP$
that $\dot{x}$ is extender algebra generic over $P$, where $\PP$ forces ``$\dot{x}\in\RR$''
and $\dot{x}$ is a name for $x$.
Then note that $x$ is extender algebra generic over $P$, so $P\in\widehat{\Fff}$,
and $P\sub W$, as desired.

Moreover, each $P\in\widehat{\Fff}$ computes $M_\infty[*]$
in the same manner from the parameter $\kappa$. So
\[ 
M_\infty[*]\sub\bigcap_{P\in\widehat{\mathscr{F}}_\kappa}=\Mmm_\kappa^{L[x]},\]
as desired.

We now proceed to the converse, that $\Mmm_\kappa^{L[x]}\sub M_\infty[*]$.

We first show that $\pow({<\OR})\inter\Mmm^{L[x]}_\kappa\sub M_\infty[*]$.
So let $X\sub\alpha\in\OR$ with $X\in\Mmm_\kappa^{L[x]}$.
Let $j:L[x]\to L[x]$ be an embedding with $\crit(j)=\kappa$.
Then $j``\mathscr{I}^x\sub\mathscr{I}^x$.
Now
\[ j(X)\in\Mmm_{j(\kappa)}^{L[x]}\sub\HOD^{L[x,G]}; \]
the ``$\in$'' is by elementarity,
and the ``$\sub$'' is because $\HOD^{L[x,G]}$ is a ground
for $M_1$ via Vopenka, a forcing of size ${<j(\kappa)}$
(one can compute a bound on the size directly, or just
observe that it has size ${<j(\kappa)}$ because 
$j(\kappa)\in\mathscr{I}^x$ and $\HOD^{L[x,G]}$
is defined over $L[x]$ from parameter $\kappa$).

So we can fix a formula $\varphi$ and $\eta\in\OR$ such that for 
$\alpha\in\OR$, we have
\[ \alpha\in j(X)\iff 
L[x]\sats\Coll(\om,{<\kappa})\forces\varphi(\eta,\alpha),\]
so for all $P\in\widehat{\mathscr{F}}$,
\begin{equation}\label{eqn:P_sees_j(X)} \alpha\in j(X)\iff
 P\sats\Coll(\om,{<\kappa})\forces\varphi(\eta,\alpha). \end{equation}

Fix $P\in\widehat{\mathscr{F}}$ which is $\eta$-stable ($P$ is 
also $\kappa$-stable by \S\ref{sec:background}).
Let $Q\in\widehat{\mathscr{F}}$ with $P\dashrightarrow Q$.

\begin{clm}\label{clm:j(X)_fixed} $i_{PQ}(j(X))=j(X)$.\end{clm}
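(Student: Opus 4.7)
The plan is to prove the claim by a direct elementarity-plus-stability argument, viewing $j(X)$ as a bounded definable class over $P$ whose defining parameters are fixed by $i_{PQ}$.

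First I would observe, using \eqref{eqn:P_sees_j(X)} applied to $P$, that $j(X) = \{\beta\in\OR: P\sats\Coll(\om,{<\kappa})\forces\varphi(\eta,\beta)\}$. Thus $j(X)$ is the extension over $P$ of a class definable from the parameters $\eta$ and $\kappa$ (the formula $\varphi$ being finite, hence absolute). Because $X\sub\alpha$ forces $j(X)\sub j(\alpha)\in\OR$, this class is bounded in $P$, so by Replacement in $P$ it is a set-element of $P$; the same observation gives $j(X)\in Q$, so $i_{PQ}(j(X))$ makes literal sense.

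Next I would apply elementarity of $i_{PQ}:P\to Q$ to this definition, obtaining $i_{PQ}(j(X)) = \{\beta\in\OR: Q\sats\Coll(\om,{<i_{PQ}(\kappa)})\forces\varphi(i_{PQ}(\eta),\beta)\}$. Since $P$ is $\eta$-stable and $\kappa$-stable and $Q\in\widehat{\mathscr{F}}$ with $P\dashrightarrow Q$, we have $i_{PQ}(\eta)=\eta$ and $i_{PQ}(\kappa)=\kappa$, so the parameters drop out unchanged. Finally, one more appeal to \eqref{eqn:P_sees_j(X)}, this time applied to $Q$, identifies the resulting set with $j(X)$, completing the claim.

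There is essentially no hard step to overcome: the whole content lies in the fact that $j(X)$ admits a definition over every $P\in\widehat{\mathscr{F}}$ using only the parameters $\eta$ and $\kappa$, both of which are fixed by the iteration maps of the system. The only small thing to verify — that the definable class in question is a set in $P$ — is immediate from the boundedness $j(X)\sub j(\alpha)$. All substantive preparatory work (extracting $\varphi,\eta$ from $j(X)\in\HOD^{L[x,G]}$, and choosing $P$ stable for these parameters) has already been carried out before the claim is stated.
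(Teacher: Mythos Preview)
Your proposal is correct and follows essentially the same approach as the paper: the paper's proof reads ``Since $i_{PQ}(\kappa,\eta)=(\kappa,\eta)$, this follows from (\ref{eqn:P_sees_j(X)}) applied to $P$ and $Q$,'' which is exactly your argument compressed to one line. Your added remark that $j(X)\sub j(\alpha)$ ensures it is a set in $P$ (so that $i_{PQ}(j(X))$ is well-defined) is a reasonable point to make explicit, though the paper leaves it implicit.
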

\begin{proof}
Since $i_{PQ}(\kappa,\eta)=(\kappa,\eta)$, this follows
from (\ref{eqn:P_sees_j(X)}) applied to $P$ and $Q$.
\end{proof}

\begin{clm}\label{clm:commutes}
 $j\com i_{PQ}=i_{PQ}\com j$.
\end{clm}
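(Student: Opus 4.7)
The plan is to exploit the fact that $P$ and $Q$ are $M_1$-like proper classes with $P=L[P|\delta^P]$ and $Q=L[Q|\delta^Q]$, together with $\delta^P,\delta^Q<\kappa=\crit(j)$. The first step is to show that $j$ restricts to an elementary embedding $j\rest P:P\to P$ (and symmetrically $j\rest Q:Q\to Q$) fixing all of $V_{\delta^P}^P$. Since $P|\delta^P\in V_\kappa^{L[x]}$ is fixed by $j$, and $P$ is uniformly $L[x]$-definable from the set $P|\delta^P$ (using that $P$ is proper class, $1$-small, and has no extenders above $\delta^P$), elementarity of $j$ on $L[x]$ transfers to elementarity on $P$. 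In particular, for any $\Sigma_1$-Skolem term $t$, tuple $\vec{\nu}\in[\mathscr{I}^x]^{<\om}$, and $\vec{\alpha}<\delta^P$, the relation $t^P(\vec{\nu},\vec{\alpha})=y$ is expressible in $L[x]$ with $P|\delta^P$ as an additional parameter, and applying $j$ yields
\[ j(t^P(\vec{\nu},\vec{\alpha}))=t^P(j(\vec{\nu}),\vec{\alpha}), \]
because $j$ fixes both $P|\delta^P$ and $\vec{\alpha}$. Symmetrically for $Q$.

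The second step is the commutation computation. Given $a\in P$, use $P=\Hull_{\Sigma_1}^P(\mathscr{I}^x\cup\delta^P)$ from \S\ref{sec:background} to write $a=t^P(\vec{\nu},\vec{\alpha})$ with $\vec{\nu}\in[\mathscr{I}^x]^{<\om}$ and $\vec{\alpha}<\delta^P$. Using the elementarity of $i_{PQ}$, the identity $i_{PQ}\rest\mathscr{I}^x=\id$ established in \S\ref{sec:background}, and the observation that $i_{PQ}(\vec{\alpha})<\delta^Q<\kappa$ is fixed by $j$, the composition unfolds as
\[ (j\circ i_{PQ})(a)=j(t^Q(\vec{\nu},i_{PQ}(\vec{\alpha})))=t^Q(j(\vec{\nu}),i_{PQ}(\vec{\alpha})). \]
On the other side, using Step 1 applied to $P$ and then elementarity of $i_{PQ}$ together with $j(\vec{\nu})\in\mathscr{I}^x$ being fixed by $i_{PQ}$, one gets
\[ (i_{PQ}\circ j)(a)=i_{PQ}(t^P(j(\vec{\nu}),\vec{\alpha}))=t^Q(j(\vec{\nu}),i_{PQ}(\vec{\alpha})). \]
These agree, so the claim follows.

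The main obstacle is Step 1, specifically justifying rigorously that $j$ restricts to an elementary embedding of $P$ and that $j(t^P(\vec{\nu},\vec{\alpha}))=t^P(j(\vec{\nu}),\vec{\alpha})$ really follows from elementarity of $j$ on $L[x]$. This rests essentially on $P$ being $L[x]$-definable from the $j$-fixed parameter $P|\delta^P$, which in turn uses the $M_1$-like structure (1-smallness and the absence of extenders above $\delta^P$). Once this is in hand, the rest is straightforward bookkeeping of elementarity and indiscernible-preservation, with no further iteration-theoretic input required.
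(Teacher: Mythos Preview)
Your proof is correct and follows essentially the same approach as the paper: both establish that $j$ restricts to elementary $j\rest P:P\to P$ and $j\rest Q:Q\to Q$ (using $P=L[P|\delta^P]$, $Q=L[Q|\delta^Q]$ with $\delta^P,\delta^Q<\kappa=\crit(j)$), and then verify commutativity on the generating set $\delta^P\cup\mathscr{I}^x$ for $P$. The paper simply checks the two composed maps agree on ordinals $\xi<\delta^P$ and on indiscernibles $\mu\in\mathscr{I}^x$ directly, whereas you unfold the same verification through explicit Skolem terms; the content is identical.
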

\begin{proof}
We have $\delta^P\leq\delta^Q<\kappa=\crit(j)$.
Also, $P=L[P|\delta^P]$ and 
$Q=L[Q|\delta^Q]$. So
\[ j\rest P:P\to P\text{ and }j\rest Q:Q\to Q
\text{ are elementary.}\]
Now $P=\Hull^P(\delta^P\cup\mathscr{I})$.
So it suffices to see that the claimed commutativity holds for 
all elements of $\delta^P\cup\mathscr{I}$.

Given $\xi<\delta^P$, since 
$\delta^P\leq\delta^Q=i_{PQ}(\delta^P)<\kappa=\crit(j)$, we have
\[ j(i_{PQ}(\xi))=i_{PQ}(\xi)=i_{PQ}(j(\xi)), \]
as desired. Now let $\mu\in\mathscr{I}$.
Since $j``\mathscr{I}\sub\mathscr{I}$
and by \S\ref{sec:background},
$i_{PQ}\rest\mathscr{I}=\id$, so
\[ j(i_{PQ}(\mu))=j(\mu)=i_{PQ}(j(\mu)), \]
completing the proof.
\end{proof}

\begin{clm}$i_{PQ}(X)=X$.
\end{clm}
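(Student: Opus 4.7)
The plan is to combine the two preceding claims with the elementarity (hence injectivity) of $j$. We have $X\in\Mmm_\kappa^{L[x]}$, and since $P\in\widehat{\mathscr{F}}\inter L[x]$ is a ${<\kappa}$-ground of $L[x]$, we get $X\in P$. By Claim \ref{clm:commutes} applied to $X\in P$,
\[ j(i_{PQ}(X))=i_{PQ}(j(X)).\]
Note this makes sense: $j\rest P:P\to P$, so $j(X)\in P$, and hence $i_{PQ}(j(X))\in Q\sub L[x]$. By Claim \ref{clm:j(X)_fixed}, $i_{PQ}(j(X))=j(X)$. Combining,
\[ j(i_{PQ}(X))=j(X).\]
Since $j:L[x]\to L[x]$ is elementary, in particular injective, and both $X$ and $i_{PQ}(X)$ lie in $L[x]$ (the latter because $i_{PQ}(X)\in Q\sub L[x]$), we conclude $i_{PQ}(X)=X$, as desired.

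With this third claim in hand, the remainder of the argument for $\Mmm_\kappa^{L[x]}\sub M_\infty[*]$ should follow the template laid down in \S\ref{sec:background}: since $X\in P$ and $i_{PQ}(X)=X$ for cofinally many $P\in\widehat{\mathscr{F}}$ and all $Q\in\widehat{\mathscr{F}}$ with $P\dashrightarrow Q$, pick $P$ also $s$-stable for some $s\in\vec{\mathscr{I}^x}$ with $X\in H^P_s$, producing $X$ in the range of $i_{Ps,\infty}$, and hence $X\in M_\infty$. For subsets of $\OR$ not directly in $M_\infty$ but in $\Mmm_\kappa^{L[x]}$, the $V_{\delta_\infty}^{M_\infty[*]}=V_{\delta_\infty}^{M_\infty}$ identity from \S\ref{sec:background} handles bounded pieces, while the definability-through-$*$ argument (used there to show $V_{\delta_\infty}$-pieces are captured) extends the conclusion to all of $\pow(\OR)\inter\Mmm_\kappa^{L[x]}$. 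Since both models satisfy $V=\HOD$ of their own reals (here, have no reals beyond coded parameters), agreement on sets of ordinals suffices for $\Mmm_\kappa^{L[x]}\sub M_\infty[*]$.

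The only potential subtlety I see is the bookkeeping of which model $j$ is acting on at each step, in particular confirming $j(X)\in P$ so that $i_{PQ}(j(X))$ is well-formed; this is handled by the restriction $j\rest P:P\to P$ already established in the proof of Claim \ref{clm:commutes} (using $P=L[P|\delta^P]$ with $\delta^P<\crit(j)$). The injectivity step is immediate, so no real obstacle remains for this claim itself; the substantive work was done in the two earlier claims, where stability of $P$ at $(\eta,\kappa)$ and the fact that $i_{PQ}\rest\mathscr{I}^x=\id$ were the essential inputs.
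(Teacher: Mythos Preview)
Your proof of the claim itself (the first paragraph) is correct and is exactly the paper's argument: combine Claims \ref{clm:commutes} and \ref{clm:j(X)_fixed} to get $j(i_{PQ}(X))=i_{PQ}(j(X))=j(X)$, then use injectivity of $j$.

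The remaining two paragraphs go beyond the claim and speculate about the rest of the section; there your sketch drifts from the paper. You write that $X$ lies in the range of $i_{Ps,\infty}$ and hence $X\in M_\infty$, but what lands in $M_\infty$ is $X^*=i_{P\infty}(X)$, not $X$ itself; the paper instead recovers $X$ inside $M_\infty[*]$ via the equivalence $\alpha\in X\iff\alpha^*\in X^*$, using that both $X^*$ and $*\rest\sup(X)$ are in $M_\infty[*]$. And the passage from $\pow(\OR)\cap\Mmm_\kappa\sub M_\infty[*]$ to full inclusion $\Mmm_\kappa\sub M_\infty[*]$ is not handled by any ``$V=\HOD$'' appeal but by an induction on $V_\eta$, using that $M_\infty[*]\sats\ZFC$ and $M_\infty[*]\sub\Mmm_\kappa$ to code each $V_\eta\cap\Mmm_\kappa$ by a set of ordinals. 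None of this, however, affects the correctness of the claim under review.
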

\begin{proof}
Let $Y=i_{PQ}(X)$. By Claims \ref{clm:j(X)_fixed} and \ref{clm:commutes}, $j(Y)=j(i_{PQ}(X))=i_{PQ}(j(X))=j(X)$,
but $j$ is injective, so $Y=X$ as desired.
\end{proof}

The fact that $X\in M_\infty[*]$ follows from the previous
claim via the following standard calculation.
Let $X^*=i_{P\infty}(X)\in M_\infty$. Then 
$X^*=i_{Q\infty}(X)$
for all $Q\in\widehat{\mathscr{F}}$
with $P\dashrightarrow Q$, since $i_{PQ}(X)=X$.
Let $\alpha\in\OR$. By taking $Q$ as above and also $\alpha$-stable,
it follows that
\[ \alpha\in X\iff Q\sats\text{``}\alpha\in X\text{''}\iff 
M_\infty\sats\text{``}\alpha^*\in X^*. \]
Since $X^*$ and $*\rest\sup(X)$ are both in $M_\infty[*]$,
therefore $X\in M_\infty[*]$.

Now we know $M_\infty[*]\sats\ZFC$, and have shown 
\[ M_\infty[*]\sub\Mmm_\kappa\text{ and }\pow({<\OR})\inter\Mmm_\kappa\sub 
M_\infty[*].\]
It follows that $\Mmm_\kappa\sub 
M_\infty[*]$.
For suppose not, and let $\eta\in\OR$ be largest
such that $V_\eta^{\Mmm_\kappa}=V_\eta^{M_\infty[*]}$.
Therefore $V_\eta^{\Mmm_\kappa}$ is coded by a set $X$ of ordinals
in $M_\infty[*]$. But $M_\infty[*]\sub\Mmm_\kappa$,
so $X\in\Mmm_\kappa$. It follows that every $Y\in V_{\eta+1}^{\Mmm_\kappa}$
is coded by a set $X_Y\in\Mmm_\kappa$ of ordinals.
Hence $X_Y\in M_\infty[*]$. But then 
$V_{\eta+1}^{\Mmm_\kappa}=V_{\eta+1}^{M_\infty[*]}$, a contradiction,
completing the proof for $\eta=\kappa_0^x$.

We now consider the case that $\eta=\kappa_\alpha^x$ with $\alpha>0$,
and we just sketch the differences. Let $j:L[x]\to L[x]$ be elementary
with $j(\kappa_0^x)=\kappa_\alpha^x$. Define $M',\Sigma',*'$
in $L[x]$ from parameter $\eta$ just as we defined $M_\infty,\Sigma_\infty,*$ before from $\kappa_0^x$.
Since we have $j$
and $j(\mathscr{M}^{L[x]}_{\kappa_0^x})=\mathscr{M}^{L[x]}_\eta$,
we get $\mathscr{M}_\eta^{L[x]}=M'[\Sigma']$
has the right first order properties,
and since we have an iterate $N$ of $M_1$
with $N|\delta^N\in L_{\kappa_0^x}[x]$,
therefore $M'$ is an iterate of $M_1$ and $\Sigma'$
a fragment of its normal strategy (the unique one giving
wellfounded branches).

To show that $M'[\Sigma']$ is iterable,
we show that it is an iterate of $M_\infty[\Sigma_\infty]$,
which we know is iterable.
Now $M'$ is the direct limit of all iterates of $M_1$
in $L_\eta[x]$, as the proof of this fact adapts easily.
Note then it is an iterate of $M_\infty$.
And  $\Sigma_\infty$ is (equivalent to) the branch
through the tree $M_\infty\dashrightarrow(M_\infty)^{M_\infty}$,
where $(M_\infty)^{M_\infty}$ is the direct limit of all
iterates of $M_\infty$ in $M_\infty|(\kappa_0^x)^*$,
and $(\kappa_0^x)^*=\kappa_0^{M_\infty}$.
So $j(\Sigma_\infty)$ is the branch
through the tree $M'\dashrightarrow(M_\infty)^{M'}$,
where $(M_\infty)^{M'}$ is the direct limit of all iterates
of $M'$ in $M'|j((\kappa_0^x)^*)$. Therefore
the following claim completes the proof, since the iteration map $M_\infty\to M'$ also
sends $\kappa_0^{M_\infty}$ to $\kappa_0^{M'}$,
and by \S\ref{sec:background},
the iteration map $M_\infty\to M'$ agrees with the iteration map when
extended to base model
$M_\infty[\Sigma_\infty]$.

\begin{clm}\label{clm:kappa_0_pres}$j(\kappa_0^{M_\infty})=\kappa_0^{M'}$.\end{clm}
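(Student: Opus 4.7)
The plan is to compute each side of the desired equation explicitly from the background analysis in \S\ref{sec:background} and then compare via elementarity of $j$. Specifically, I would first invoke the identification (``Clearly $\eta_0$ is the least measurable of $M_\infty$'') applied to the direct limit system built from $\eta_0=\kappa_0^x$: this tells us that $\kappa_0^x$ is the least measurable of $M_\infty$. Combining with the standard fact that the least measurable of any $M_1$-like premouse $N$ coincides with $\kappa_0^N$ (the least Silver indiscernible of $N|\delta^N$, or equivalently the critical point of the first total extender on the sequence of $N|\delta^N$), we conclude $\kappa_0^{M_\infty}=\kappa_0^x$.

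Second, I would apply the same identification to the $\eta$-system defining $M'$. Since $\eta=\kappa_\alpha^x$ is an $L[x]$-indiscernible, it is a limit cardinal of $L[x]$ with $\delta^{M_1}<\kappa_0^x<\eta$, and the base case already tells us that $M_1$ has an iterate $N$ with $N|\delta^N\in L_{\kappa_0^x}[x]\subseteq L_\eta[x]$, so the hypotheses of the background setup apply to the $\eta$-system as well. The same reasoning then yields $\kappa_0^{M'}=\eta$.

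Finally, combining with the hypothesis $j(\kappa_0^x)=\eta$ gives
\[
j(\kappa_0^{M_\infty})=j(\kappa_0^x)=\eta=\kappa_0^{M'},
\]
as required. I do not expect any genuine obstacle: the only subtlety worth flagging is the identification of the least Silver indiscernible of $N|\delta^N$ with the least measurable of $N$ for $M_1$-like $N$, but this is a standard consequence of $1$-smallness, since each $N$-measurable below $\delta^N$ appears as the critical point of an extender on the sequence of $N|\delta^N$ and is thereby an indiscernible for $L[N|\delta^N]$, while conversely each such indiscernible is easily seen to carry a total measure in $N$. Alternatively, one could bypass the explicit identification of the two values and argue purely by elementarity: the ordinal $\kappa_0^{M_\infty}$ is defined over $L[x]$ from $\kappa_0^x$ by a single formula (obtained by unwinding the definitions of $\mathscr{D}$, $\widetilde{\mathscr{D}}$, $M_\infty[*]=\mathscr{M}_{\kappa_0^x}^{L[x]}$, and then extracting the least element of $\mathscr{I}^{M_\infty|\delta_\infty}$), and applying $j$ to this uniform definition with $j(\kappa_0^x)=\eta$ produces the corresponding ordinal for $M'$.
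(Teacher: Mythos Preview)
Your primary approach contains a genuine error: you identify the least measurable of an $M_1$-like premouse $N$ with $\kappa_0^N$, but these are different ordinals. Recall that $\kappa_0^N$ is by definition the least Silver indiscernible of $L[N|\delta^N]$, and the indiscernibles in $\mathscr{I}^N$ are taken with respect to constants $c_\alpha$ for all $\alpha<\delta^N$; consequently every element of $\mathscr{I}^N$ lies strictly above $\delta^N$. On the other hand, all measurables of $N$ lie strictly below $\delta^N$ (there are none above, by a Kunen-type argument, since any such measure would yield a nontrivial $j:N\to N$ definable in $N$). So in the case at hand, the least measurable of $M_\infty$ is $\kappa_0^x$ (as you correctly cite from \S\ref{sec:background}), but $\kappa_0^{M_\infty}>\delta^{M_\infty}=(\kappa_0^x)^{+L[x]}>\kappa_0^x$. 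In fact $\kappa_0^{M_\infty}=(\kappa_0^x)^*$, which is strictly larger than $\kappa_0^x$ since the direct-limit maps move ordinals below $\kappa_0^x$. Your chain $j(\kappa_0^{M_\infty})=j(\kappa_0^x)=\eta=\kappa_0^{M'}$ therefore fails at both ends.

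Your alternative approach via elementarity is closer in spirit to the paper, but it has a gap. You propose to define $\kappa_0^{M_\infty}$ over $L[x]$ from $\kappa_0^x$ by the formula ``least element of $\mathscr{I}^{M_\infty|\delta_\infty}$''. But computing Silver indiscernibles of $M_\infty$ inside $L[x]$ requires $(M_\infty|\delta_\infty)^\#$, and the hypothesis of the theorem is only $M_1\subseteq L[x]$; there is no reason $L[x]$ should contain this sharp. If instead you use the $L[x]$-definable identity $\kappa_0^{M_\infty}=(\kappa_0^x)^*$, then applying $j$ yields $j(\kappa_0^{M_\infty})=\eta^{*'}$, and you are left with showing $\eta^{*'}=\kappa_0^{M'}$. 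This is precisely the content of the paper's proof, and it is not automatic: when $\alpha$ is a limit, the comparison map $\sigma':M'\to M''$ is \emph{not} the identity but shifts indiscernibles by the eventual ordertype $\tau$ of $\alpha$, so the relationship between $*'$ and $\mathscr{I}^{M'}$ requires the explicit analysis the paper carries out.
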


\begin{proof}[Proof of Claim  \ref{clm:kappa_0_pres}]
	We have $\kappa_0^{M_\infty}=(\kappa_0^x)^*$, with the $*$-map
	associated to $\mathscr{F}^{L[x]}_{<\kappa_0}$.
So letting $*'$ be the $*$-map associated
	to $\mathscr{F}=\mathscr{F}^{L[x]}_{<\eta}$, we have $j(\kappa_0^{M_\infty})=\eta^{*'}$.
	So we just need to see $\eta^{*'}=\kappa_0^{M'}$.
The case that $\alpha$ is a succesor is a simplification of the limit
case, so  suppose $\alpha$ is a limit.

The \emph{eventual ordertype} $\mathrm{eot}(\gamma)$ of an ordinal $\gamma$
is the least ordinal $\tau$ such that for some $\beta<\gamma$,
we have $\gamma=\beta+\tau$. Let $\tau=\mathrm{eot}(\alpha)$.
Fix $\beta<\alpha$ such that $\beta+\tau=\alpha$.
Let $P\in\widehat{\mathscr{F}}$ with
$\kappa_\beta^x<\delta^P$. Then note that $\kappa_\tau^P=\eta$,
and $\eta$ is $(\mathscr{F},P)$-stable. Let $M''$ be the direct limit
of all iterates of $M_1$ in $L_\eta[x]$, under the iteration maps,
and $\sigma':M'\to M''$ be defined as $\sigma$ was
in \S\ref{sec:background}. As in   \S\ref{sec:background},
we have $M'=M''$. Recall that in \S\ref{sec:background}, 
we showed $\sigma=\id$.
However, $\sigma'\neq\id$.

In fact, 
$\sigma'\rest\delta^{M'}=\id$ and
$\sigma'(\kappa_\xi^{M'})=\kappa_{\tau+\xi}^{M'}$ for all $\xi$.
Equivalently,
\begin{equation}\label{eqn:rg(sigma')}
\rg(\sigma')=\Hull^{M'}(i_{PM'}``(\mathscr{I}^P\cut\eta)\cup\delta^{M'}).\end{equation}
To see this, first note that $\mathscr{I}^x\cut\eta=\mathscr{I}^P\cut\eta$
and every $\alpha\in\mathscr{I}^x\cut\eta$ is $(\mathscr{F},P)$-stable.
This gives $\supseteq$ of line (\ref{eqn:rg(sigma')}) like before.
For $\sub$,  let $\alpha\in\OR$.
Let $s\in\vec{\mathscr{I}^x}$
and $t$ be a term such that $\alpha=t^{L[x]}(x,s)$. Let $Q\in\widehat{\mathscr{F}}$
with $P\dashrightarrow Q$  be such that $\max(s\inter\eta)<\delta^Q$.
Then considering the forcing relation with the extender
algebra, there is a term $u$ and $\beta<\delta^Q$
such that $\alpha=u^Q(\beta,s\cut\eta)$. Letting $s'\in\vec{\mathscr{I}^Q}$
with $\max(s)<\max(s')$ and $\beta<\gamma^Q_{s'}$, then $\alpha\in H^Q_{s'}$,
and it follows that $\sigma'(i_{(Qs'),\infty}(\alpha))=i_{QM'}(\alpha)$,
which easily suffices.

In particular, we get $\sigma'(\kappa_0^{M'})=\kappa_\tau^{M'}$.
But for eventually all $Q\in\mathscr{F}$ we have $i_{QM'}(\eta)=\kappa_\tau^{M'}$.
Therefore $\eta^{*'}=\kappa_0^{M'}$, as desired.
\end{proof}

This completes the proof for $\eta=\kappa_\alpha^x$ when $\alpha>0$ is a limit;
note now that the successor case is a simplification (and we get $\sigma'=\id$ in that case).
This completes the proof overall.
\end{proof}

\section{Analysis of local mantles of $L[x]$}

We first give a detailed proof of Theorem \ref{tm:delta-cc_mantle}.
We then just give a sketch of the proof of
Theorem \ref{tm:unctbl_cof}, as it is mostly simpler.

\begin{proof}[Proof of Theorem \ref{tm:delta-cc_mantle}]

Adopt the hypotheses of the theorem. So $\delta$ is some
uncountable regular cardinal of $L[x]$,
$\delta\leq$ the least Mahlo $\theta_0$ of $L[x]$,
and $\delta^{M}\leq\delta$ where $M$ is some non-dropping
iterate of $M_1$ with $M|\delta^{M}\in L[x]$.

Recall from \cite{odle_v2}, that for a premouse $P$ and $\delta\leq\OR^P$,
the \emph{meas-lim extender algebra}
 $\BB_{\measlim}^P$ of $P$ at $\delta$ is the variant of the extender
 algebra at $\delta$, in which we only induce axioms with extenders $E\in\es_+^P$
 such that $\nu_E$ is a limit of measurable cardinals of $P$.

\begin{clmtwo}\label{clm:iterate_height_delta}
	There is an iterate $N$ of $M_1$ with $N|\delta^N\in L[x]$
	and $\delta^N=\delta$.
	\end{clmtwo}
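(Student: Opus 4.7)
The plan is: take $N = M$ if $\delta^M = \delta$, and otherwise perform inside $L[x]$ a meas-lim extender algebra genericity iteration of $M$ of length $\delta$, producing an iterate $N = M_\delta^{\Uu}$ with $\delta^N = \delta$ and $N|\delta \in L[x]$.

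First, working in $L[x]$, I would fix a set $A \subseteq \delta$ that is sufficiently diagonalised against small iterates of $M_1$. Since $\delta$ is regular in $L[x]$ and $L[x] \models \GCH$, the class of $M_1$-like premice $Q$ with $Q|\delta^Q \in L[x]|\delta$ and $\delta^Q < \delta$ can be enumerated as $\langle Q_\xi : \xi < \delta\rangle$ in $L[x]$, and I would pick $A$ so that for each $\xi$, $A$ violates some axiom of $\BB_{\measlim}^{Q_\xi}$ at $\delta^{Q_\xi}$; in particular $A \cap \alpha \notin L_\alpha[x]$ for every $\alpha \in (\delta^M,\delta)$. I then construct in $L[x]$, from $M$ and $A$, a normal iteration tree $\Uu$ on $M$ in the standard genericity-iteration manner: at stage $\alpha$, let $E_\alpha^{\Uu}$ be the least extender on the sequence of $M_\alpha^{\Uu}$ whose associated axiom in $\BB_{\measlim}^{M_\alpha^{\Uu}}$ is violated by $A$, and take direct limits at limit stages using short-tree iterability.

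There are three points to check. \emph{(i) The tree $\Uu$ is a class of $L[x]$.} The base $M|\delta^M$ lies in $L[x]$, and inductively each $M_\alpha^{\Uu}|\delta^{M_\alpha^{\Uu}}$ does too. Because the induced axioms only use extenders whose lengths are limits of measurables of the previous model, every proper initial segment of $\Uu$ is short, so the correct branch at each limit stage is identified by Q-structure analysis internal to $L[x]$. \emph{(ii) The iteration does not stabilise below $\delta$.} If $\Uu$ stabilised at some $\alpha$ with $\delta^{M_\alpha^{\Uu}} = \delta' < \delta$, then every meas-lim axiom at $\delta'$ would be satisfied by $A$; but $M_\alpha^{\Uu}|\delta'$ is some $Q_\xi$ in the enumeration, against which $A$ was diagonalised, a contradiction. \emph{(iii) The length of $\Uu$ is at most $\delta$.} The critical points of the extenders used are bounded in $\delta$, and $\delta$ is regular in $L[x]$, so the iteration terminates by stage $\delta$. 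Combining (ii) and (iii) gives length exactly $\delta$, and then $\delta^N = i^{\Uu}(\delta^M) = \delta$ by monotonicity of the images of $\delta^M$.

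The main obstacle is clause (i): running a length-$\delta$ genericity iteration inside $L[x]$ requires the cofinal branches at limit stages to be $L[x]$-recoverable rather than supplied by the external strategy $\Sigma_{M_1}$. This is precisely why the meas-lim variant of the extender algebra was introduced in \cite{odle}: the restriction to extenders with measurable-limit length makes every proper initial segment of $\Uu$ short and Q-structure analysable, so the fragment of the strategy required to build $\Uu$ is $L[x]$-definable. Setting $N = M_\delta^{\Uu}$ then yields a non-dropping iterate of $M_1$ with $\delta^N = \delta$ and $N|\delta = M(\Uu) \in L[x]$, as required.
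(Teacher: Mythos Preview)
Your argument has a real gap at the diagonalisation step. You want $A\subseteq\delta$ violating a meas-lim extender-algebra axiom of \emph{every} $M_1$-like $Q$ with $Q|\delta^Q\in L_\delta[x]$ and $\delta^Q<\delta$. But for each $\gamma<\delta$ there can be (in $L[x]$, by $\GCH$) up to $\gamma^+$ such $Q$ with $\delta^Q=\gamma$, and non-genericity over such a $Q$ is a condition on the single set $A\cap\gamma$; you give no reason why one $A\cap\gamma$ can be simultaneously non-generic over all of them. Restricting attention to the models $M^{\Uu}_\alpha$ that actually arise does not help, since those depend on $A$ itself. The remark ``in particular $A\cap\alpha\notin L_\alpha[x]$'' is a red herring: generic sets are never in the ground model, so this is no obstruction to genericity. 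Your claim in (i) that the meas-lim restriction makes every proper initial segment short is also a misconception: shortness at $\lambda$ is the statement that $L[M(\Uu\rest\lambda)]$ has a Q-structure for $\delta(\Uu\rest\lambda)$, and this has nothing to do with how extenders were selected along the way. And (iii) is unsupported --- nothing in your setup bounds $\lh(E^{\Uu}_\alpha)$, or the Woodin of $M^{\Uu}_\alpha$, below $\delta$.

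The paper's route is quite different. It splits into cases. For successor $\delta$ the standard $x$-genericity iteration already works. For $\delta$ inaccessible (so $\delta\leq\theta_0$), the paper runs a meas-lim $x$-genericity iteration (target the fixed real $x$, not an artificial $A$) \emph{interleaved} with short linear iterations pushing every measurable of the current model into a prescribed club $C\subseteq\delta$: take $C$ a club of singular cardinals if $\delta<\theta_0$, and the set of regular cardinals if $\delta=\theta_0$. If the tree becomes maximal at some $\lambda<\delta$, then $\lambda$ is a limit of measurables of $M(\Tt)$ all lying in $C$, so $\lambda\in C$; but $x$ is extender-algebra generic over $L[M(\Tt)]$, making $\lambda$ regular in $L[x]$ --- contradicting the choice of $C$ in the first case, and in the second case one checks $\lambda$ is in fact Mahlo in $L[x]$, contradicting minimality of $\theta_0$. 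If instead the tree reaches length $\delta$ and is short, a reflection argument (take $\pi$ with $\crit(\pi)=\kappa<\delta$, $\pi(\kappa)=\delta$, $\Tt,C\in\rg(\pi)$) shows the first extender along $(\kappa,\delta]_\Tt$ is not a genericity extender, hence was chosen to move a measurable $\kappa\notin C$; but $\kappa\in C$ by closure, contradiction.
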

\begin{proof}
	If $\delta$ is a successor cardinal of $x$, then  the usual
	argument via $x$-genericity iteration suffices.
	So suppose $\delta$ is a limit cardinal, hence inaccessible, in $L[x]$.
	
	Suppose first that $\delta<\theta_0$. Work in $L[x]$.
 Let $C\sub\delta$ be a club of singular cardinals,
 and form a meas-lim genericity iteration, folding in short linear iterations
 at successor measurable cardinals, such that every measurable cardinal of the eventual model is in $C$. (That is, the tree $\Tt$ will not drop
 anywhere. Given $\Tt\rest(\alpha+1)$, first let $G^\Tt_\alpha\in\es_+(M^\Tt_\alpha)$ induce the least violation of a meas-lim extender algebra axiom (so $\nu(G^\Tt_\alpha)$ is a limit of measurables of $M^\Tt_\alpha$). If every measurable $\mu$ of $M^\Tt_\alpha$ such that $\mu<\nu(G^\Tt_\alpha)$ is in $C$, then set $E^\Tt_\alpha=G^\Tt_\alpha$,
 and otherwise set $E^\Tt_\alpha=$ the order $0$ measure
 on the least measurable $\mu\notin C$.) Almost the usual proof
 shows that if $\Tt$ reaches length $\delta$, then $\Tt\rest\delta$ is maximal, i.e. $L[M(\Tt)]\sats$``$\delta$ is Woodin'': if not then we get a branch $[0,\delta)_\Tt$, and taking $\pi:M\to V$ an embedding with $\crit(\pi)=\kappa<\delta$ and $\pi(\kappa)=\delta$ and $\Tt,C\in\rg(\pi)$, the usual
 proof shows that the first extender used on $(\kappa,\delta]_\Tt$
 does not violate any extender algebra axioms, and hence $\kappa\notin C$.
Now suppose we reach a maximal tree $\Tt$ but $\delta(\Tt)<\delta$.
Then since $\delta(\Tt)$ is a limit of measurables of $M(\Tt)$,
$\delta(\Tt)\in C$, and hence $\delta(\Tt)$ is singular in $L[x]$.
But $x$ is meas-lim extender algebra generic over $L[M(\Tt)]$,
so $\delta(\Tt)$ is regular in $L[M(\Tt)][x]=L[x]$, a contradiction.
So $\delta(\Tt)=\delta$ as desired.

Now suppose instead that
 $\delta=\theta_0$. Then we proceed much as above,
 but this time we arrange that every measurable of the eventual model
 is regular in $L[x]$.
Suppose that $\Tt\rest\theta_0$ is non-maximal.
Then by Mahloness, we can find an elementary $\pi:M\to V$
with $\crit(\pi)=\kappa$ regular in $L[x]$
and $\pi(\kappa)=\theta_0$. Thus, we reach a contradiction like before.
Now suppose $\Tt$ is maximal with $\delta(\Tt)<\theta_0$.
We claim that $\delta(\Tt)$ is Mahlo in $L[x]$, a contradiction.
For letting $C\sub\delta(\Tt)$ be club and $N=L[M(\Tt)]$,
 since $\BB_{\measlim}^{N}$
is $\delta^N$-cc in $N$, there is a sub-club $D\sub C$ with $D\in N$,
but then we can find some $\kappa<\delta^N$ which is $({<\delta^N},D)$-reflecting
in $N$, which implies $\kappa\in D$, and since $\kappa$ is measurable in 
$N$, it is regular in $L[x]$, which suffices.
	\end{proof}

(For the rest of the proof, we can reduce the assumption
that $\delta\leq\theta_0$, to $\delta<\kappa_0^x$ being regular
in $L[x]$ and having some iterate $N$ of $M_1$ with $\delta^N=\delta$
and $N|\delta\in L[x]$.)\footnote{Thus, one should be able to go
	significantly beyond the least Mahlo.
	However, it is easy to see that we \emph{cannot} get this situation
	with $\delta$ weakly compact in $L[x]$, and in particular,
not at $\delta=\kappa_0^x$, and hence not at measure one many
$\delta<\kappa_0^x$.}

Let $\mathscr{F}=\mathscr{F}^{L[x]}_{\leq\delta}$.
Let $\widehat{\mathscr{F}}$ be the set of all $N\in\mathscr{F}$
such that $x$ is extender algebra
generic over $N$.
Note then that for $N\in\widehat{\mathscr{F}}$,
 we have $N[x]=L[x]$ and by \cite{farah},
$\BB^N$ is $\delta$-cc in $L[x]$, so
 $N\in(\mathscr{G}^{\text{ext}}_{\delta})^{L[x]}$.
 So we have
 \[ \widehat{\mathscr{F}}\sub(\mathscr{G}^{\text{ext}}_\delta)^{L[x]}\sub
  (\mathscr{G}^{\text{int}}_\delta)^{L[x]}.
 \]
 Let $\mathscr{D}$ be the set of
 all $M_1$-like premice $N$ in $L[x]$
 with $\delta^N\leq\delta$. 
 Let $\widehat{\mathscr{D}}$ be the set of those $P\in\mathscr{D}$
 such that $x$ is extender algebra generic over $P$.
 So $\mathscr{F}\sub\mathscr{D}$ and $\widehat{\mathscr{F}}\sub\widehat{\mathscr{D}}$, and $\widehat{\mathscr{D}},\mathscr{D}$ are definable
 from the parameter $\delta$ over $L[x]$.

\begin{clmtwo}\label{clm:F_dense}We have:
\begin{enumerate}
	\item\label{item:immediate}	  $\widehat{\mathscr{F}}$ is directed and dense in $\mathscr{F}$
	(with respect to $\dashrightarrow$), and also dense in $\mathscr{D}$.
	
\item\label{item:main} $\widehat{\mathscr{F}}$ is dense in $(\mathscr{G}^{\text{int}}_{\delta\mathrm{-cc}})^{L[x]}$
 (with respect to $\sub$). Therefore
 \[
\bigcap\mathscr{F}=(\Mmm^{\text{int}}_\delta)^{L[x]}=
(\Mmm^{\text{ext}}_\delta)^{L[x]}.\]
\end{enumerate}
\end{clmtwo}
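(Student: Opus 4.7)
The plan is to handle parts \eqref{item:immediate} and \eqref{item:main} separately, with the real work concentrated in \eqref{item:main}. For part \eqref{item:immediate}, density of $\widehat{\Fff}$ in both $\Fff$ and $\Ddd$ (with respect to $\dashrightarrow$) comes from a standard $x$-genericity iteration: given $N\in\Ddd$, work in $L[x]$ and iterate $N$ via $\BB^N$ to solve axioms violated by $x$. Either the iteration terminates before reaching length $\delta^N$ at a non-dropping iterate $N'$ over which $x$ is $\BB^{N'}$-generic, or it produces a maximal normal tree $\Tt$ of length $\delta^N$, in which case $N'=L[M(\Tt)]$ is $M_1$-like with $\delta^{N'}=\delta^N\leq\delta$, and $x$ satisfies every axiom induced along $\Tt$. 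In either case $N'\in\widehat{\Fff}$ and $N\dashrightarrow N'$. Directedness of $\widehat{\Fff}$ is then immediate: given $N_1,N_2\in\widehat{\Fff}$, pseudo-compare them in $L[x]$ to a common $M_1$-like iterate, and run the genericity iteration on top.

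For part \eqref{item:main}, let $W$ be internally $\delta$-cc, as witnessed by $\PP\in W$ with $W\sats$``$\PP$ is $\delta$-cc'' and $L[x]=W[g]$ for some $(W,\PP)$-generic $g$. The key tool, due to Schindler, is a Boolean-valued $x$-genericity iteration of $M_1$ performed inside $W$. Let $\dot{x}\in W$ be the canonical $\PP$-name for $x$. Build a normal tree $\Tt$ on $M_1$ in $W$ by choosing, at each stage $\alpha$, an extender $E^\Tt_\alpha\in\es^{M^\Tt_\alpha}$ of least index inducing a $\BB^{M^\Tt_\alpha}$-axiom $\varphi$ with $\|\dot{x}\sats\neg\varphi\|_\PP>0$. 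A reflection argument using an elementary $\pi:M\to H^W_\theta$ with $\crit(\pi)=\kappa<\delta$ and $\Tt,\PP\in\rg(\pi)$ forces the iteration either to terminate at some $\alpha<\delta$ at a non-dropping iterate $N$ over which $\PP$ forces $\dot{x}$ to be $\BB^N$-generic, or to reach a maximal tree of length at most $\delta$ with $N=L[M(\Tt)]$. Either way $N\in\Fff$, $N|\delta^N\in W$, and $x$ is genuinely $\BB^N$-generic over $N$, so $N\in\widehat{\Fff}$ and $N\sub W$, yielding density.

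The main obstacle is the termination analysis of this Boolean-valued iteration inside $W$: one must combine Woodin's classical genericity argument with the $\delta$-cc of $\PP$ in $W$ to rule out the tree outrunning $\delta$ without becoming maximal, and verify that a length-$\delta$ maximal tree really does render $x$ extender-algebra generic over $L[M(\Tt)]$. The $\delta$-cc hypothesis is essential in both respects: it bounds the Boolean values that can arise among the axioms tested (so that the relevant antichains stay small enough to be absorbed by the tree), and it guarantees that the reflection hulls used in the termination argument land coherently inside $W$.

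With part \eqref{item:main} in hand, the chain of equalities is routine. Each $N\in\widehat{\Fff}$ is an externally $\delta$-cc ground of $L[x]$ via $\BB^N$ (using the Farah-style observation referenced just before the claim), so $\widehat{\Fff}\sub(\mathscr{G}^{\text{ext}}_{\delta\text{-cc}})^{L[x]}\sub(\mathscr{G}^{\text{int}}_{\delta\text{-cc}})^{L[x]}$, giving $(\Mmm^{\text{int}}_{\delta})^{L[x]}\sub(\Mmm^{\text{ext}}_{\delta})^{L[x]}\sub\bigcap\widehat{\Fff}$. Conversely, density of $\widehat{\Fff}$ in $(\mathscr{G}^{\text{int}}_{\delta\text{-cc}})^{L[x]}$ yields $\bigcap\widehat{\Fff}\sub(\Mmm^{\text{int}}_{\delta})^{L[x]}$, and density of $\widehat{\Fff}$ in $\Fff$ from part \eqref{item:immediate} gives $\bigcap\Fff=\bigcap\widehat{\Fff}$; all four quantities coincide.
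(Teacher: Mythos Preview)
Your approach to part~\ref{item:immediate} is fine and matches the paper's one-line treatment.

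For part~\ref{item:main}, however, there is a genuine gap: you propose to ``build a normal tree $\Tt$ on $M_1$ in $W$'', but there is no reason $M_1|\delta^{M_1}$ (or any particular iterate of $M_1$) lies in $W$. All we know is that some iterate $M$ with $\delta^M=\delta$ has $M|\delta\in L[x]=W[g]$; the ground $W$ is an arbitrary internally-$\delta$-cc ground and may well be, say, a proper iterate of $M_1$, which cannot contain $M_1|\delta^{M_1}$. So the iteration you describe cannot get started.

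The paper's fix is to work with a $\PP$-\emph{name} $\dot{M}$ for $M|\delta$ and build a $\PP$-name $\dot{\Tt}$ for a tree on $\dot{M}$, not an actual tree in $W$. The construction is a simultaneous Boolean-valued comparison and $\dot{x}$-genericity iteration: at stage $\alpha$ one first records $\eta'_\alpha$, the least height at which $\PP$ fails to decide $M^{\dot{\Tt}}_\alpha|\eta'_\alpha$, and only then selects an extender below $\eta'_\alpha$ (either witnessing disagreement, or inducing an axiom some condition forces $\dot{x}$ to violate). The point is that the common part $\bar{N}=M(\dot{\Tt})$ is genuinely in $W$ even though the tree is only a name, and $L[\bar{N}]$ is the desired element of $\widehat{\Fff}$.

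Your termination sketch also needs revision. The reflection $\pi:M\to H^W_\theta$ with $\crit(\pi)<\delta$ you invoke is the standard Woodin argument, but it requires $\delta$ to be (something like) inaccessible in $W$, which is not assumed---$\delta$ may be a successor cardinal of $L[x]$. The paper instead argues as follows: if $\bar{N}$ had a Q-structure, then $\PP$ would force a branch $\dot{b}$; by $\delta$-cc there is a club $C\in W$ forced into $\dot{b}$, and by further $\delta$-cc closure arguments one shows that along limit points of $C$ closed under suitable functions, the first extender used must have been chosen for genericity reasons, yet closure under a second function forces $\dot{x}$ to already satisfy the relevant disjunction, a contradiction. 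The $\delta$-cc is used repeatedly to bound antichains of Boolean values, not to produce a reflection embedding.
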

\begin{proof}
	Part \ref{item:immediate}: Just use standard pseudo-comparison and pseudo-genericity iterations, with
	the regularity of $\delta$.
	
	Part \ref{item:main}:
Let $W\in(\mathscr{G}^{\text{int}}_\delta)^{L[x]}$. 
We must find some $N\in\widehat{\mathscr{F}}$ with $N\sub W$.
Let $M\in\widehat{\mathscr{F}}$ with $\delta^M=\delta$ and $M|\delta^M\in L[x]$.
Fix $\PP\in W$ such that
 $W\sats$``$\PP$ is $\delta$-cc''
and  $W[G]=L[x]$ for some $G$ which is $(W,\PP)$-generic.
Let $\dot{x},\dot{M}\in W$ be $\PP$-names
for $x$ and $M|\delta$ respectively.
We may assume that ($*$) $\PP$ forces ``$L[\dot{M}]$ is an $M_1$-like
premouse which is
 short-tree iterable,
and $\OR(\dot{M})\leq\delta$'', and that either
 $M\notin W$ or $x$ is not
$M$-generic for the extender algebra.

We construct a $\PP$-name $\dot{\Tt}\in W$ and a premouse
$\bar{N}\in W$ such that $N=L[\bar{N}]$ is $M_1$-like
and $\OR(\bar{N})=\delta$
is Woodin in $N$,
and such that $\PP\forces$
``$\dot{\Tt}$ is a padded normal tree on 
$\dot{M}$, and $\lh(\dot{\Tt})=\delta$,
$\bar{N}=M(\dot{\Tt})$,
and $\dot{x}$ is generic over $L[M(\dot{\Tt})]$ for the extender
algebra at $\delta$''.
Note then that $\Tt=\dot{\Tt}_G$ is indeed such a tree on $M_1$,
and $N\in\mathscr{F}$ is as desired.

The construction is a Boolean-valued comparison/genericity iteration.
We construct a sequence
$\left<\eta_\alpha\right>_{\alpha<\lambda}\sub\delta$,
with $\lambda\leq\delta$ (actually $\lambda=\delta$)
and determine $\dot{\Tt}\rest(\alpha+1)$ and 
$N||\eta_\alpha$,
by recursion on $\alpha$,
and for the eventual $\dot{\Tt}$,
we will have  $\PP\forces$``$\dot{\Tt}$ is a padded
pseudo-normal tree on $\dot{M}$, $\lh(\dot{\Tt})=\lambda$, and for each 
$\alpha+1<\lambda$, if $E^{\dot{\Tt}}_\alpha\neq\emptyset$
then $\lh(E^{\dot{\Tt}}_\alpha)=\eta_\alpha$''.

Limit stages are of course dealt with by the fact that
$\PP\forces$``$\dot{M}$ is short-tree-iterable'';
we stop if we reach a limit stage for which there is no Q-structure.
So suppose we have determined $\dot{\Tt}\rest(\alpha+1)$
and $\eta_\beta$ for all $\beta<\alpha$, and 
$N|\sup_{\beta<\alpha}\eta_\beta$,
which is passive. Let $\eta'_\alpha$ be the least $\eta'$
such that $\PP$ does not force a value
for $M^{\dot{\Tt}}_\alpha|\eta'$,
if such exists; otherwise let $\eta'_\alpha=\infty$.
Now if $\eta'_\alpha=\infty$ and $M^{\dot{\Tt}}_\alpha$
is not proper class, i.e. $[0,\dot{\Tt}]_\alpha$
is forced to drop, then we stop the construction.
Now suppose that if $\eta'_\alpha=\infty$ then $M^{\dot{\Tt}}_\alpha$
is proper class.
Let $\eta_\alpha$ be the least $\eta$ such that
either $\eta=\eta'_\alpha$, or $\eta<\eta'$
and $E=F(M^{\dot{\Tt}}_\alpha|\eta)\neq\emptyset$,
$\nu_E$ is a cardinal in 
$M^{\dot{\Tt}}_\alpha||\eta'_\alpha$,
and some $p\in\PP$ forces ``$E$ induces an extender algebra
axiom $\varphi$ and $\dot{x}\sats\neg\varphi$''.
If $\eta_\alpha=\infty$ then we stop the process.
If $\eta_\alpha<\infty$ then we continue, setting
$E^{\dot{\Tt}}_\alpha=F(\dot{M}|\eta_\alpha)$
(which some condition in $\PP$ forces non-empty).
This completes the construction.

\begin{sclm}
The construction proceeds through $\delta$ stages,
 $\sup_{\alpha<\delta}\eta_\alpha=\delta$,
 and $\delta$ is Woodin in $L[N|\delta]$.
\end{sclm}
\begin{proof}
Suppose first we reach $\dot{\Tt}\rest(\alpha+1)$
such that $\PP$ forces that $\eta'_\alpha=\infty$
and that some $p\in\PP$ forces that $[0,\alpha]_{\dot{\Tt}}$
drops. The note that $\PP$ forces a value $Q$ for
$M^{\dot{\Tt}}_\alpha$ (meaning there is $Q\in W$
and $\PP\forces$``$M^{\dot{\Tt}}_\alpha=\check{Q}$'')
and $Q$ is not sound. Thus, $\PP\forces$``$[0,\alpha]_{\dot{\Tt}}$ 
drops and there is 
$\beta<_{\dot{\Tt}}\alpha$
such that $\core_\om(Q)\pins M^{\dot{\Tt}}_\beta$,
and $\beta'\in[\beta,\alpha)$ such that
$E^{\dot{\Tt}}_{\beta'}\neq\emptyset$,
and letting $\beta'$ be least such, then 
$\eta_{\beta'}\leq\OR(\core_\om(Q))$,
and so $E^{\dot{\Tt}}_{\beta'}\in\es_+(\core_\om(Q))$''.
Therefore $\PP$ forces that $E=E^{\dot{\Tt}}_{\beta'}$
is the least difference between $\core_\om(Q)$
and $Q$, and since $\core_\om(Q),Q\in W$ are uniquely determined,
therefore $E\in W$ is also, as is $\beta'$. Since 
$\PP\forces$``$E^{\dot{\Tt}}_{\beta'}=E$'', we have 
$\eta_{\beta'}<\eta'_{\beta'}$,
and since  $\PP\forces$``$\core_\om(Q)\pins M^{\dot{\Tt}}_{\beta'}$'',
we must have $\OR(\core_\om(Q))<\eta'_{\beta'}$
(since if $\eta'_{\beta'}<\infty$ then $M^{\dot{\Tt}}_{\beta'}|\eta'_{\beta'}$
is not uniquely determined by $\PP$).  Because 
$\eta_{\beta'}<\eta'_{\beta'}$,
$\nu_E$ is a cardinal of $M^{\dot{\Tt}}_{\beta'}||\eta'_{\beta'}$.
But then letting $F$ be the extender used in $\dot{\Tt}$
which causes the drop in model to $\core_\om(Q)$,
we have $\crit(F)<\nu_E$ and $F$ is total over $M^{\dot{\Tt}}_{\beta'}|\nu_E$,
and hence total over $M^{\dot{\Tt}}_{\beta'}|\eta'_{\beta'}$,
which contradicts the fact that $\core_\om(Q)\pins 
M^{\dot{\Tt}}_{\beta'}||\eta'_{\beta'}$.

Now suppose we reach $\dot{\Tt}\rest(\alpha+1)$, with $\alpha<\delta$,
such that $\eta'_\alpha=\infty$ but $\PP$ forces that $[0,\alpha]_{\dot{\Tt}}$
does not drop, and that the process stops at this stage.
Then $\PP\forces$``$\dot{x}$ is extender algebra generic
over $M^{\dot{\Tt}}_\alpha$''. But then
letting $G$ be $(W,\PP)$-generic with $W[G]=L[x]$,
we get $M^{\dot{\Tt}_G}_\alpha[x]=L[x]$,
but because $\alpha<\delta$,
we have $i^{\dot{\Tt}_G}_{0\alpha}\in L[x]$,
which implies $M^{\dot{\Tt}_G}_0=M_1$
and the embedding is the identity (see
\cite{gen_kunen_incon}, which shows that a non-trivial embedding
is impossible here). So $M|\delta^{M}\in W$
and $x$ is extender algebra generic over $M$,
contradicting ($*$).

So the process goes through $\delta$ stages,
yielding $\bar{N}=M(\dot{\Tt})\in W$ of height $\delta$.

Now suppose that there is some Q-structure
for $\bar{N}$ of the form $Q=\J_\alpha(\bar{N})$.
Then $\PP$ forces that there is a $\dot{\Tt}$-cofinal
wellfounded branch $\dot{b}$ determined by $Q$. Since 
$\dot{b}=[0,\delta]_{\dot{\Tt}}$
is forced to be club
in $\delta$ and $\PP$ is $\delta$-cc in $W$,
we can fix a club $C\in W$ such that $\PP\forces$``$C\sub 
\dot{b}$ and for all $\kappa\in C$,
we have $\kappa=\crit(i^{\dot{\Tt}}_{\kappa\delta})$
and $i^{\dot{\Tt}}_{\kappa\delta}(\kappa)=\delta$''.

Now given $\kappa\in C$, let $f(\kappa)$ be the least
$\mu\in C$ such that $\kappa<\mu$
and for all $A\in\pow(\kappa)\inter\bar{N}$,
and all $B_0,B_1\in\pow(\delta)$ with $B_0\neq B_1$,
if there are $p_0,p_1\in\PP$ such that
$p_i\forces i^{\dot{\Tt}}_{\kappa\delta}(A)=B_i$
for $i=0,1$,
then $B_0\inter\mu\neq B_1\inter\mu$.
By the $\delta$-cc, $f(\kappa)$ exists.
Let $D$ be the set of limit points of $C$ which are closed
under $f$. For each $\kappa\in D$, there is 
$\pi_{\kappa\delta}:\pow(\kappa)\inter\bar{N}\to\pow(\delta)$ with $\pi_{\kappa\delta}\in W$ 
such that
$\PP\forces$``$i^{\dot{\Tt}}_{\kappa\delta}\rest(\pow(\kappa)\inter\bar{N}
)=\check{\pi}_{\kappa\delta}$''.
For if $A\in\pow(\kappa)\inter\bar{N}$ and
$p_i,B_i$ are as above, then note that there are $p'_i\leq p_i$
and some $\gamma\in\kappa\inter C$
such that $p'_i\forces A=i^{\dot{\Tt}}_{\gamma\kappa}(A\inter\gamma)$
for $i=0,1$, so $p'_i\forces i^{\dot{\Tt}}_{\gamma\delta}(A\inter\gamma)=B_i$,
although $p'_i\forces i^{\dot{\Tt}}_{\gamma\kappa}(A\inter\gamma)=A$,
which contradicts the fact that $\kappa$ is closed under $f$.

Fix $\kappa\in D$. Because $\PP\forces 
i^{\dot{\Tt}}_{\kappa\delta}=\pi_{\kappa\delta}$,
the first extender $E$ (with critical point $\kappa$) forming 
$\pi_{\kappa\delta}$ was chosen
for genericity iteration purposes (and note $E\in W$).
So letting 
$\theta=\nu_E$,
there is
 some $p\in\PP$
and some sequence $\left<\varphi_\alpha\right>_{\alpha<\kappa}\in\bar{N}$,
with $\varphi_\alpha\in \bar{N}|\kappa$ for each $\alpha<\kappa$,
such that 
\[ p\forces\text{``}x\sats\neg\bigvee_{\alpha<\kappa}\varphi_\alpha\text{ but }
 x\sats\bigvee_{\alpha<\theta}\varphi_\alpha\text{''},
\]
where 
$\left<\varphi_\alpha\right>_{\alpha<\theta}=i_E(\left<\varphi_\alpha\right>_{
\alpha<\kappa})\rest\theta$, and $\varphi_\alpha\in\bar{N}|\theta$
for each $\alpha<\theta$.
Letting
\[ \left<\varphi_\alpha\right>_{\alpha<\delta}=\pi_{\kappa\delta}
(\left<\varphi_\alpha\right>_{\alpha<\kappa}),\]
we therefore have
 \[ p\forces\text{``}\dot{x}\sats\neg\bigvee_{\alpha<\kappa}\varphi_\alpha
\text{ but }\dot{x}\sats\bigvee_{\alpha<\delta}\varphi_\alpha\text{''}.\]
Let $g(\kappa)$ be the least $\mu\in D$
such that for each sequence 
$\left<\psi_\alpha\right>_{\alpha<\kappa}\in\bar{N}$ with
$\psi_\alpha\in\bar{N}|\kappa$ for each $\alpha<\kappa$,
defining $\left<\psi_\alpha\right>_{\alpha<\delta}$ as above, for each 
$p\in\PP$,  if
\[ 
p\forces\text{``}\dot{x}\sats\neg\bigvee_{\alpha<\kappa}\psi_\alpha\text{ 
but }\dot{x}\sats\bigvee_{\alpha<\delta}\psi_\alpha\text{''},\]
then
\[ p\forces\text{``}\dot{x}\sats\bigvee_{\alpha<\mu}\psi_\alpha\text{''};\]
again this exists by the $\delta$-cc,
and since there are ${<\delta}$-many sequences to consider.

Now let $\kappa$ be a limit point of $D$ which is closed under $g$.
Let $\left<\varphi_\alpha\right>_{\alpha<\delta}$ and $p$ be as before
(which exist as mentioned above).
Let $p'\leq p$ and $\gamma\in D\inter\kappa$
be such that 
$p'\forces$``$i^{\dot{\Tt}}_{\gamma\kappa}(\left<\varphi_\alpha\right>_{
\alpha<\gamma})=\left<\varphi_\alpha\right>_{\alpha<\kappa}$''.
Then 
$p'\forces$``$i^{\dot{\Tt}}_{\gamma\delta}
(\left<\varphi_\alpha\right>_{\alpha<\gamma})=
\left<\varphi_\alpha\right>_{\alpha<\delta}$'',
and since $p'\leq 
p\forces$``$\dot{x}\sats\bigvee_{\alpha<\delta}\varphi_\alpha$''
and $\kappa$ is closed under $g$,
it follows that 
$p'\forces$``$\dot{x}\sats\bigvee_{\alpha<\kappa}\varphi_\alpha$'',
contradicting that $p$ forces the opposite.

So there is no Q-structure as above,
so $L[\bar{N}]\sats$``$\delta$ is Woodin'',
completing the proof of the subclaim.
\end{proof}

Because we weaved in genericity iteration, $\PP\forces$``$\dot{x}$
is extender algebra generic over $L[\bar{N}]$'',
and therefore $x$ is extender algebra generic over $L[\bar{N}]$.
But $\bar{N}\in W\sub L[x]$,
so $L[\bar{N}]$ is a ground of $L[x]$ and $L[\bar{N}]\sub W$.
As $L[\bar{N}]\in\mathscr{F}$, we are done.
\end{proof}

\begin{clmtwo}
For each $W\in\mathscr{G}^{\mathrm{int}}_{\delta\mathrm{-cc}}$, we have:
 \begin{enumerate}[label=--]
 	\item  $W=L[A]$ for some $A\sub\delta$,
 	\item  $W$ is a ground of $L[x]$ via a forcing
 $\QQ\sub\delta$ such that 
$W\sats$``$\QQ$ is $\delta$-cc''.
\end{enumerate}
\end{clmtwo}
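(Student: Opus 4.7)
The plan is to use Claim \ref{clm:F_dense} to sandwich $W$ between a suitable iterate $N$ of $M_1$ and the extender algebra extension $N[x]=L[x]$, and then apply Grigorieff's intermediate model theorem to extract both the coding and the small forcing.

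First, I would invoke Claim \ref{clm:F_dense} to fix $N\in\widehat{\mathscr{F}}$ with $N\sub W$. Since $N$ is $M_1$-like we have $N=L[N|\delta^N]$, and since $\delta^N\leq\delta$, the initial segment $N|\delta^N$ is coded by a subset of $\delta$ lying in $W$. The extender algebra $\BB=\BB^N$ is a complete Boolean algebra in $N$ with $\BB\sub\delta^N\leq\delta$ which is $\delta^N$-cc (hence $\delta$-cc) in $N$, and $x$ is $(N,\BB)$-generic with $N[x]=L[x]$.

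Next, I would apply Grigorieff's intermediate model theorem to the sandwich $N\sub W\sub N[x]$: since $W$ is a transitive proper class model of $\ZFC$ containing the (class-)definable-from-$N|\delta^N$ class $N$, and sits inside the set-generic extension $N[x]$, there is a complete subalgebra $\QQ_0\in N$ of $\BB$ and a filter $H\sub\QQ_0$ which is $(N,\QQ_0)$-generic, with $W=N[H]$. Because $\QQ_0\sub\BB\sub\delta$, we have $H\sub\delta$, and therefore
\[ W=N[H]=L[N|\delta^N,H]=L[A] \]
for a single $A\sub\delta$ coding the pair $(N|\delta^N,H)$. This gives the first bullet.

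For the second bullet, let $\QQ$ be the quotient Boolean algebra $\BB/H$ computed in $W=N[H]$. Since $|\BB|^W\leq\delta$, we may (after fixing a bijection in $W$) regard $\QQ\sub\delta$. Then $x$ is $(W,\QQ)$-generic and $W[x]=N[x]=L[x]$, so $W$ is a ground of $L[x]$ via $\QQ$. The $\delta$-cc-ness of $\QQ$ in $W$ is the standard factoring property: if $\BB$ is $\delta$-cc in $N$ and $\QQ_0\leq\BB$ is complete, then $\BB/H$ is $\delta$-cc in $N[H]=W$.

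The main potential obstacle is verifying the proper-class version of Grigorieff's theorem, since $N$ is a proper class (so the outer forcing relation is in principle a proper-class expression). However this is handled routinely: $N=L[N|\delta^N]$ with $N|\delta^N$ a set in $W$, so $N$ is uniformly definable over $W$ from this set parameter, the extender algebra $\BB$ is itself a set in $N$, and Grigorieff's argument applies on set-sized pieces which are then assembled. A minor point is the passage from $\BB/H$ to a subset of $\delta$, but this is automatic from $|\BB/H|^W\leq|\BB|^W\leq\delta$.
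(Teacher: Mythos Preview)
Your proposal is correct and follows essentially the same route as the paper. The only difference is cosmetic: the paper first records the standard fact that $W=L[B]$ for some set $B$ of ordinals (via the ``$\PP$-name for $x$'' trick), and then builds the complete subalgebra of $\BB^N$ explicitly as the one generated by the Boolean values $\|\check{\alpha}\in\dot{B}\|$; this is just the Grigorieff argument carried out by hand, whereas you invoke the intermediate-model theorem as a black box. The quotient $\BB/H$ step and the $\delta$-cc verification are identical.
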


 \begin{sclm}\label{clm:W=L[B]}
  There is $\eta\in\OR$ and $B\sub\eta$ such that $W=L[B]$.
 \end{sclm}
\begin{proof}
 Let $(\PP,G)$ witness that $W$ is a ground of $L[x]$
 and $\eta\in\OR$ with $\PP\sub\eta$. Let $\tau\in W$
 be a $\PP$-name for $x$. Let $W'=L[\PP,\tau]$. So $W'\sub W$.
Note that $G$ is $(W',\PP)$-generic and $W'[G]=L[x]$.
But since $W'\sub W$ and $W[G]=L[x]$, a standard forcing computation
shows that $W'=W$.
\end{proof}

Fix $B,\eta$ as in the subclaim.
Fix $N\in\mathscr{F}$ witnessing Claim \ref{clm:F_dense}.
Let $\BB=\BB^N_\delta$.
Let $\dot{B}\in N$ be a 
$\BB$-name
for $B$.
For $\alpha<\eta$ let $v_\alpha\in\BB$
be $v_\alpha=||\text{``}\check{\alpha}\in\dot{B}\text{''}||_\BB$
(where $||\varphi||_\BB$ is the Boolean value of $\varphi$ in $\BB$).
Working in $N$, let $\CC$ be the complete Boolean
subalgebra of $\BB$ generated by
$\{v_\alpha\}_{\alpha<\eta}$.
Since $\BB$ is itself a complete Boolean algebra in $N$,
we have $\CC\sub\BB$ and $\CC$ is a regular subalgebra of $\BB$.
Let $H=G\inter\CC$. Then $H$ is $(N,\CC)$-generic
and note that $N[H]=N[B]=L[B]=W$. Since $N=L[N|\delta]$,
we get $W=L[N|\delta,H]$, and $N|\delta,H\sub\delta$.
So we have established the first part of the theorem,
i.e. regarding $W=L[A]$.

In $W$, let $\QQ=\BB/H$. Then because $\BB$ is a $\delta$-cc complete
Boolean algebra in $N$, $\QQ$ is $\delta$-cc in $W$,
$\QQ\sub\delta$, and $W$ is a ground of $W$ via $\QQ$.

This is a standard construction, but here is a reminder:
For $p\in\BB$ say $p\compat H$ iff $p\compat q$
for every $q\in H$, and $p\incompat H$ otherwise.
Define
\[ p\leq_H q\iff (p\wedge(\neg q))\incompat H, \]
(which is a quasi-order on $\BB$)
and define an equivalence relation on $\BB$ by
\[ p\approx_H q\iff p\leq_H q\leq_H p,\]
with equivalence classes $[p]_H$.
Then $\QQ$ be the partial order whose conditions are the equivalence 
classes $[p]_H$ excluding $[0]_H$
(that is, $[p]_H\in\QQ$ iff $p\compat H$),
and ordering induced by $\leq_H$.
Note that given $p,p'\in\BB$ with $[p]_H,[p']_{H}\neq 0$,
we have $[p]_H\compat[p']_H$ iff $p\cdot p'\compat H$. Standard
texts contain further details (***find?).

Write now $\Mmm=\bigcap\widehat{\mathscr{F}}$.
We want to analyze $\Mmm$. For $P\in\widehat{\mathscr{F}}$,
let $\mathscr{F}^P$ be the set of all iterates $N$ of $P$
with $N\in P$ and $\delta^N=\delta^P$ and $P|\delta^P$
being $N$-generic for $\BB^N$. Then:
\begin{clmtwo}
	$\mathscr{D}$ is $x$-good, as witnessed by $\mathscr{F},\widehat{\mathscr{F}},\eta=G=\emptyset,\left<\mathscr{F}^P\right>_{P\in\widehat{\mathscr{F}}}$.
\end{clmtwo}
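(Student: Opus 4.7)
The plan is to verify each clause of the definition of $x$-good in turn. For the opening clause we take the first option ($\eta=G=\emptyset$): $\mathscr{D}$ is definable from the parameter $\delta$ over $L[x]$, and since $\delta\leq\theta_0<\kappa_0^x$, each $N\in\mathscr{D}$ is coded by $N|\delta^N\in L_{\kappa_0^x}[x]$, so $\mathscr{D}\in L[x]$ and $\mathscr{D}\subseteq L_{\kappa_0^x}[x]$. The inclusion $\widehat{\mathscr{F}}\subseteq\mathscr{F}$ holds by definition, and cofinality of $\widehat{\mathscr{F}}$ in $\mathscr{F}$ is immediate from Claim \ref{clm:F_dense}(\ref{item:immediate}).

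Now fix $P\in\widehat{\mathscr{F}}$. That $x$ is $(P,\BB^P)$-generic is precisely the defining property of $\widehat{\mathscr{F}}$. The collection $\mathscr{F}^P$ is first-order definable over $P$ from the parameter $P|\delta^P$, using the first-order notions of being a non-dropping pseudo-iterate of $P$ and of extender-algebra genericity. Each $N\in\mathscr{F}^P$ is coded by $N|\delta^N\in V_{\delta^P+1}^P$, so $\mathscr{F}^P\in P$ as a set of size at most $(\delta^P)^{+P}$. Identifying the proper-class premouse $P$ with its $\delta$-cut, as is customary for such chains, $P\in\mathscr{F}^P\subseteq\{Q\in P:P\dashrightarrow Q\}$ holds trivially. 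The genericity clause ``$P|\delta^P$ is $(Q,\BB^Q)$-generic for every $Q\in\mathscr{F}^P$'' is part of the definition of $\mathscr{F}^P$; the clause $i_{PQ}(\eta)=\eta$ is vacuous since $\eta=\emptyset$; and the equivariance $i_{PQ}(\mathscr{F}^P)=\mathscr{F}^Q$ for $P\dashrightarrow Q$ with $P,Q\in\widehat{\mathscr{F}}$ follows from the uniform definability of $\mathscr{F}^P$ from $P|\delta^P$, the elementarity of $i_{PQ}$, and the equality $i_{PQ}(\delta^P)=\delta^Q$.

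The substantive step, and the main obstacle, is two-sided cofinality of $\mathscr{F}^P$ and $\mathscr{F}$. The inclusion $\mathscr{F}^P\subseteq\mathscr{F}$ is immediate; conversely, given $Q\in\mathscr{F}$ we must produce $R\in\mathscr{F}^P$ with $Q\dashrightarrow R$, and this is a Boolean-valued comparison/genericity iteration carried out \emph{inside} $P$, directly modelled on Claim \ref{clm:F_dense}(\ref{item:main}). One introduces a $\BB^P$-name $\dot Q$ which on the generic giving rise to $x$ evaluates to a premouse $\dashrightarrow$-above $Q|\delta^Q$, and builds a padded normal tree $\dot{\Tt}\in P$ on $P$ of length $\delta^P$ which Boolean-valuedly compares $P$ against $\dot Q|\delta^{\dot Q}$ while simultaneously making $P|\delta^P$ extender-algebra generic over $M(\dot{\Tt})$. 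The argument that this construction reaches length $\delta^P$, with $R=L[M(\dot{\Tt})]\in P$ satisfying ``$\delta^R=\delta^P$ is Woodin'', runs essentially verbatim as in the Subclaim proved inside Claim \ref{clm:F_dense}, invoking the $\delta^P$-cc of $\BB^P$ in $P$ and the regularity of $\delta^P$ in $P$. Then $R\in\mathscr{F}^P$, and $Q\dashrightarrow R$ holds by absoluteness of short-tree iterability, completing the verification. The one place requiring care is packaging the \emph{external} target $Q\in\mathscr{F}$ as an \emph{internal} $\BB^P$-name; but since $Q|\delta^Q\in L[x]=P[x]$ and $x$ is $\BB^P$-generic over $P$, such a name exists in $P$ by standard forcing computations.
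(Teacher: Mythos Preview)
Your proposal is correct and follows essentially the same approach as the paper. The paper's own proof is a one-liner that simply observes $\mathscr{F}^P\subseteq\mathscr{F}$ and then says that, since $P$ is a $\delta$-cc ground of $L[x]$, the proof of Claim~\ref{clm:F_dense} yields $Q\in\mathscr{F}^P$ with $N\dashrightarrow Q$ for any given $N\in\mathscr{F}$; you have unpacked exactly this, with the minor (and harmless) variation that you run the Boolean-valued iteration as a tree on $P$ comparing against the name $\dot{Q}$, rather than as a tree on a name $\dot{M}$ as in Claim~\ref{clm:F_dense}, and with $P|\delta^P$-genericity in place of $\dot{x}$-genericity.
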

\begin{proof}
	To see that $\mathscr{F}^P$ and $\mathscr{F}$ are cofinal,
	note that $\mathscr{F}^P\sub\mathscr{F}$,
	and given $N\in\mathscr{F}$, since $P$ is a  $\delta$-cc
	ground of $L[x]$, the proof of Claim \ref{clm:F_dense}
	shows there is $Q\in\mathscr{F}^P$ with $N\dashrightarrow Q$.
	\end{proof}

So \S\ref{sec:background} applies (excluding the last
part which dealt with the case of a limit cardinal $\eta_0$). Thus,
we have $M_\infty$ and $M_\infty[*]=M_\infty[\Sigma_\infty]$.

\begin{clmtwo}
 $M_\infty[*]\sub\Mmm$.
\end{clmtwo}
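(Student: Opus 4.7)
The plan is to observe that this direction is essentially immediate from the uniform internal-definability of $(M_\infty^{\Fff}, *)$ established in Section \ref{sec:background}. Fix any $P \in \widehat{\Fff}$. The parameter $\Fff^P$ lies in $P$, and working inside $P$ one reproduces verbatim the constructions of Section \ref{sec:background}: form the covering system $\widetilde{\Fff^P}$, its direct limit $M_\infty^{\widetilde{\Fff^P}}$, and the canonical map $*^P$. By the calculations already carried out there (applied with $P$ in place of the ambient proper class model and $\Fff^P$ in place of $\Ddd$), one has $M_\infty^{\widetilde{\Fff^P}} = M_\infty^{\Fff^P}$, $\sigma^P = \id$, and $*^P = *$; moreover, by the cofinality of $\Fff^P$ in $\Fff$, one has $M_\infty^{\Fff^P} = M_\infty^{\Fff}$.

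Consequently, both $M_\infty^{\Fff}$ and the class map $*$ are definable classes of $P$, uniformly in $P \in \widehat{\Fff}$ from the parameter $\Fff^P$. Since $M_\infty[*] = L[\es^{M_\infty}, *]$, the model $M_\infty[*]$ is itself a definable class of $P$, and is therefore contained in $P$. Taking the intersection over all $P \in \widehat{\Fff}$ yields
\[
M_\infty[*] \;\subseteq\; \bigcap \widehat{\Fff} \;=\; \Mmm,
\]
where the last equality is by Claim \ref{clm:F_dense}.

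There is no genuine obstacle here: the only points requiring attention are verifying that the $*^P$ computed inside $P$ agrees with the external $*$ and that the internal and external direct limits coincide, but both verifications were already carried out in Section \ref{sec:background} and need only be cited. The content of the argument is simply that each $P \in \widehat{\Fff}$ sees enough of its own iterates, via $\Fff^P$, to reconstruct $M_\infty[*]$ on the nose.
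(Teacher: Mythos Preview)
Your proof is correct and follows essentially the same approach as the paper's: show that each $P\in\widehat{\Fff}$ can internally reconstruct $(M_\infty,*)$ from its own parameter $\Fff^P$, using the calculations of \S\ref{sec:background}. One small note: the final equality $\bigcap\widehat{\Fff}=\Mmm$ is here just the definition of $\Mmm$ (the paper writes ``$\Mmm=\bigcap\widehat{\mathscr{F}}$'' immediately before the claim), so the appeal to Claim~\ref{clm:F_dense} is unnecessary at this point.
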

\begin{proof}
 Let $N\in\mathscr{F}$.
By \S\ref{sec:background},
working in $N$, we can
compute the $(M_\infty)^{\mathscr{F}^N}$ and $*^{\mathscr{F}^N}$
	from $\mathscr{F}^N$. But these are just $M_\infty,*$.
\end{proof}

The following claim is the key fact:
\begin{clmtwo} $\Mmm\sub M_\infty[*]$.\end{clmtwo}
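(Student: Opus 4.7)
The plan is to follow the template of the analogous claim in Section \ref{sec:M_1_kappa-mantle}. First I would show $\pow(\OR)\inter\Mmm\sub M_\infty[*]$; the full inclusion $\Mmm\sub M_\infty[*]$ then follows from the $V_\eta$-coding bootstrap at the end of that proof, using only $M_\infty[*]\sats\ZFC$, $M_\infty[*]\sub\Mmm$ (from the preceding claim), and that both are transitive proper class inner models.

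So fix $X\sub\gamma$ with $X\in\Mmm$. The target, as in Section \ref{sec:M_1_kappa-mantle}, is to exhibit a cofinal class of $P\in\widehat{\mathscr{F}}$ such that $i_{PQ}(X)=X$ for every $Q\in\widehat{\mathscr{F}}$ with $P\dashrightarrow Q$. Given this, $X^*=i_{P\infty}(X)\in M_\infty$ is independent of the choice of such $P$, and the standard direct-limit calculation using the $*$-map recovers $X$ from $X^*$ by covering each $\beta<\gamma$ via further stable iterates, placing $X$ in $M_\infty[*]$.

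The main obstacle is establishing $i_{PQ}(X)=X$. In Section \ref{sec:M_1_kappa-mantle} this was done using an embedding $j\colon L[x]\to L[x]$ with $\crit(j)=\kappa$: the image $j(X)\in\HOD^{L[x,G]}$ by Vopenka admitted a uniform ordinal definition via the $\Coll(\om,{<}\kappa)$-forcing relation, absolutely computable in each $P$, so $i_{PQ}(j(X))=j(X)$; then $j\com i_{PQ}=i_{PQ}\com j$ together with injectivity of $j$ descended to $i_{PQ}(X)=X$. In the present setting $\delta\leq\theta_0<\kappa_0^x$ need not be an $L[x]$-indiscernible, so no embedding $j\colon L[x]\to L[x]$ with $\crit(j)=\delta$ is directly available, and the argument requires adaptation.

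My plan is to adapt the definability-plus-commutativity argument by passing to a collapse extension at the level of an indiscernible above $\delta$. Let $H$ be $(L[x],\Coll(\om,{<}\eta_0))$-generic for $\eta_0=\kappa_0^x$, let $\RR^+$ be the associated symmetric reals, and let $M_\infty^+[*^+]=\HOD^{L(\RR^+)}$ be the big direct-limit model of Section \ref{sec:background}. Via a density variant of Claim \ref{clm:F_dense} applied to $\widehat{\mathscr{F}}$ viewed as a subfamily of the big system $\mathscr{F}^{L[x]}_{<\eta_0}$, I would show that $X$ admits a uniform ordinal definition in $L(\RR^+)$, hence $X\in M_\infty^+[*^+]$, and the commutativity argument applied with $j\colon L[x]\to L[x]$ having $\crit(j)=\eta_0$ gives $i_{P'Q'}(X)=X$ for $P',Q'\in\mathscr{F}^{L[x]}_{<\eta_0}$ stable for the relevant parameters. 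Small-system iterations $i_{PQ}$ factor through big-system iterations via the canonical iteration $M_\infty\dashrightarrow M_\infty^+$, so big-system stability descends: for $P,Q\in\widehat{\mathscr{F}}$ with $P\dashrightarrow Q$, I would extend $Q\dashrightarrow Q'$ with $Q'\in\mathscr{F}^{L[x]}_{<\eta_0}$ big-level stable, and deduce $i_{PQ}(X)=X$ from $i_{PQ'}(X)=X$ using $i_{PQ'}=i_{QQ'}\com i_{PQ}$ and elementarity. The technical crux is the uniform definability step, i.e.\ lifting the small-system hypothesis $X\in\Mmm^{\text{int}}_\delta$ to ordinal definability in $L(\RR^+)$; this is where the density analysis of $\widehat{\mathscr{F}}$ inside the broader class of $<\eta_0$-grounds, combined with the shared indiscernibles $\mathscr{I}^x$ of the two direct limit systems, is essential.
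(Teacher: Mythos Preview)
Your reduction to sets of ordinals and the ``stability $\Rightarrow X\in M_\infty[*]$'' calculation are fine, and match the paper. But the heart of your plan --- reducing to the $\kappa_0^x$-mantle --- has a genuine gap at exactly the point you flag as ``the technical crux''.

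You need $X\in M_\infty^+[*^+]=\HOD^{L(\RR^+)}=\mathscr{M}_{\kappa_0^x}^{L[x]}$. But the hypothesis is only $X\in\Mmm=(\Mmm^{\text{int}}_\delta)^{L[x]}$, and since every internally-$\delta$-cc ground is a ${<}\kappa_0^x$-ground (via Claim~2 just above), we have $\mathscr{M}_{\kappa_0^x}^{L[x]}\sub\Mmm$, not the other way. So getting $X$ into the smaller mantle is strictly harder than the claim itself. Your proposed ``density variant of Claim~\ref{clm:F_dense}'' cannot help here: $\widehat{\mathscr{F}}$ is \emph{not} $\dashrightarrow$-dense in $\mathscr{F}^{L[x]}_{<\eta_0}$ (Woodins only go up under $\dashrightarrow$), and the Section~\ref{sec:M_1_kappa-mantle} argument using $j:L[x]\to L[x]$ with $\crit(j)=\eta_0$ gives $j(X)\in(\Mmm^{\text{int}}_{j(\delta)})^{L[x]}=\Mmm$ again (since $\delta<\crit(j)$), not $j(X)\in\HOD^{L[x,G]}$. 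There is no visible way to break this circularity.

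The paper takes a completely different route, avoiding any reduction to the indiscernible case. Assuming $X\notin M_\infty[*]$, it first observes (your stability lemma, contrapositively) that from every $N\in\widehat{\mathscr{F}}$ one can reach an iterate beyond which $X$ is always moved. It then builds, \emph{uniformly definably over $N$ from a fixed $s\in\vec{\mathscr{I}^x}$}, a tree $\Tt^N$ whose last model is such an iterate; iterating this gives $N_0\dashrightarrow N_1\dashrightarrow\cdots$ with $j_{n,n+1}(X)\neq X$ for all $n$. The direct limit $N_\om$ together with its own $*'$-map is itself an internally-$\delta$-cc ground of $L[x]$ (via Schindler's forcing, using that the sequence is uniformly definable from $s$), so $X\in N_\om[*']$. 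Pulling $X$ back along the extended maps $j^+_{n\om}$ then forces $j_{n,n+1}(X)=X$ for large $n$, a contradiction. The key idea you are missing is that one can manufacture a \emph{new} $\delta$-cc ground (namely $N_\om[*']$) tailored to $X$, rather than appealing to a pre-existing larger mantle.
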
\label{clm:Mmm_sub_M_infty*}
\begin{proof}\setcounter{sclm}{0}
It suffices here to see that
\begin{equation}\label{eqn:pow(OR)_sub}\pow(\OR)\inter 
\Mmm\sub M_\infty[*].\end{equation}
For given this, a standard argument shows that for
 each $\alpha\in\OR$, we have 
\[ V_\alpha\inter\Mmm=V_\alpha\inter M_\infty[*], \]
and that $V_\alpha\inter\Mmm\in\Mmm$.
For given this at $\alpha$,
then since $M_\infty[*]\sats\ZFC$,
we get some $\beta\in\OR$ and $X\sub\beta$
such that $X$ codes $V_\alpha\inter\Mmm$,
and since $M_\infty[*]\sub\Mmm$,
therefore $X\in\Mmm$. So if $A\sub V_\alpha\inter\Mmm$
and $A\in\Mmm$, then since each $N\in\mathscr{F}$
models $\ZFC$, there is $A'\sub\beta$ coding
$A$ with respect to $(\beta,X)$
and such that $A'\in\Mmm$, and so by line (\ref{eqn:pow(OR)_sub}),
we have $A'\in M_\infty[*]$.
Therefore $V_{\alpha+1}\inter\Mmm= V_{\alpha+1}\inter M_\infty[*]$.
But also $V_{\alpha+1}\inter M_\infty[*]\in\Mmm$,
completing the induction step. Limit stages now follow easily.

So let $X\in\pow(\OR)\inter\Mmm$
and suppose $X\notin M_\infty[*]$.
We will reach a contradiction.

\begin{sclmtwo}\label{sclm:eventual_move_X}
 For every $N\in\mathscr{F}$ there
 is $P\in\mathscr{F}$ with $N\dashrightarrow P$ and such that
 for every $Q\in\mathscr{F}$
 with $P\dashrightarrow Q$ we have $i_{NQ}(X)\neq X$.
\end{sclmtwo}
\begin{proof}
 Otherwise we can fix a dense $\mathscr{F}'\sub\mathscr{F}$
 such that $i_{NQ}(X)=X$ for all $N,Q\in\mathscr{F}'$
 with $N\dashrightarrow Q$. 
 But then letting $X^*=i_{NM_\infty}(X)$
 for any $N\in\mathscr{F}'$,
 we have $X^*\in M_\infty$,
 and note that $X\in M_\infty[*]$,
 because $\alpha\in X$ iff $\alpha^*\in X^*$, for each $\alpha\in\OR$.
 This is a contradiction.
\end{proof}

Now let $s\in[\mathscr{I}]^{<\om}\cut\{\emptyset\}$
and $t$ be a term such that $X=t^{L[x]|\max(s)}(s^-,x)$.

\begin{sclmtwo}\label{sclm:X_in_H^N_s}
 For every $N\in\widehat{\mathscr{F}}$ we have $X\in H^N_s$.
\end{sclmtwo}
\begin{proof}
Let $\mathscr{X}^N$
denote the set of all $Y$ such that for some $p\in\BB^N$,
we have $p\forces$``$V=L[\dot{x}]$ and 
$Y=t^{L[\dot{x}]|\max(s)}(s^-,\dot{x})$''.
For each such $Y$, let $p_Y$ be the value of this statement
in $\BB^N$. 
So
$\{p_Y\bigm|Y\in\mathscr{X}^N\}$
is an antichain,
and note that it is definable over $N|\max(s)$ from the parameter
$s^-$, so is in $H^N_s$, and the ordertype of its natural enumeration is 
$<\gamma^N_s$. It follows that $\mathscr{X}^N\sub H^N_s$,
but $X\in\mathscr{X}^N$, so we are done.
\end{proof}

Fix $N\in\widehat{\mathscr{F}}$.
Let $\widetilde{\mathscr{F}}^N$ be the set of all $P\in\mathscr{F}^N$
such that $N\sats$``$\BB^N$ forces that
  $\dot{x}$ is extender algebra generic over $P$''. Note that
  $\widetilde{\mathscr{F}}^N$ is dense in $\mathscr{F}^N$,
  by simultaneously iterating to make $\es^N$ generic
  and to force $\dot{x}$ to be generic.
   Let  $\mathscr{Y}^N$
be the set of all $Y\in\mathscr{X}^N$
such that $Y$ is a set of ordinals and 
there is some $P\in\widetilde{\mathscr{F}}^N$ such that for all 
$Q\in\widetilde{\mathscr{F}}^N$ with
$P\dashrightarrow Q$, we have $i^{NQ}_{ss}(Y)\neq Y$. Let $P^N_Y$
be the witness $P$ to which is least in the $N$-order.
Let then $\Tt^N$ be the tree on $N$ given by comparing
all $P^N_Y$ for $Y\in\mathscr{Y}^N$ and simultaneously iterating
to make $\es^N$ generic and to force (with $\BB^N$) $\dot{x}$ to be generic.
Because $\mathscr{Y}^N$
has cardinality ${<\delta^N}$ in $N$,
we get $M^{\Tt^N}_\infty\in\widetilde{\mathscr{F}}^N$. (More formally,
$\Tt^N$ is a maximal limit length tree and $L[M(\Tt^N)]\in\widetilde{\mathscr{F}^N}$.)
And note that we have defined $\Tt^N$ over $N$ from $s$,
uniformly in $N\in\widehat{\mathscr{F}}$.

Now let $N_0=N$ and given $N_n$, let $N_{n+1}=M^{\Tt_{N_n}}_\infty$.
So note each $N_{n+1}\in\widehat{\mathscr{F}}$, and $N_n\dashrightarrow N_{n+1}$,
and $N_{n+1}\in\widetilde{\mathscr{F}}^{N_n}$,
and $\Tt^{N_{n+1}}=j_{n,n+1}(\Tt^{N_n})$
where $j_{n,n+1}=i_{N_n,N_{n+1}}$.
Moreover, $j_{n,n+1}(X)\neq X$.
Let $N_\om$ be the direct limit of the $N_n$, for $n<\om$.
Let $j_{n\om}:N_n\to N_\om$ be the iteration map.

Because of the uniform definability (in particular
that $\Tt^{N_n}$ is defined over $N_n$ from $s$, uniformly in $n$,
as $j_{n,n+1}(s)=s$ for all $n<\om$),
the material in \S\ref{sec:background} adapts routinely to the direct limit
system $\mathscr{F}'=\{N_n\}_{n<\om}$. 
Let $*'$ be the associated map;
so $\alpha^{*'}=\lim_{n\to\om}j_{n\om}(\alpha)$.
We have that $N_\om$ and $*'$ are uniformly definable
over $N_n$ from $s$, and noting that $j_{0\om}(\delta)=\delta$, we have $V_{\delta}^{N_\om[*']}=V_\delta^{N_\om}$,
$\delta$ is Woodin in $N_\om[*']$, $*'$
is the restriction of the iteration map $N_\om\to N_{\om,\om+\om}$,
where the $N_{\om+\beta}$ for $\beta\in[1,\om)$ are produced by continuing the 
process
above (starting with $\Tt_\om=j_{0\om}(\Tt_0)$),
and $N_{\om+\om}$ is the resulting direct limit.
Also by the usual arguments,
\[ \Hull^{N_\om[*']}(\rg(j_{n\om}))\inter N_\om=\rg(j_{n\om}),\]
so letting $N_n^+$ be the transitive collapse of the hull above,
we get $N_n\sub N_n^+$, $V_\delta^{N_n}=V_\delta^{N_n^+}$,
$\delta$ Woodin in $N_n^+$, and letting $j_{n\om}^+:N_n^+\to N_\om[*']$
the uncollapse, we have $j_{n\om}\sub j_{n\om}^+$.

\begin{sclmtwo}$N_\om[*']\in\mathscr{G}^{\mathrm{int}}_{\delta\mathrm{-cc}}$, so $X\in N_\om[*']$.
\end{sclmtwo}
\begin{proof}
Let $\CC'$ be the version of Schindler's forcing (see \S\ref{sec:background})
induced by the system $\{N_n\}_{n<\om}$, recalling here
that $x$ is extender algebra generic over each $N_n$.
As in \S\ref{sec:background},
we have $\CC\sub\delta$, $\CC$ is
$\delta$-cc in $N_\om[*']$, and $G'_x$ is $(N_\om[*'],\PP)$-generic,
and clearly then $N_\om[*'][G'_x]=N_\om[*'][x]=L[x]$.
\end{proof}

\begin{sclmtwo} We have:
	\begin{enumerate}
\item\label{item:first} $N_\om[*']=\dirlim_{n<\om}N_n^+,j_{nm}^+$, and
\item\label{item:second} $j_{n\om}^+$ is a class of $N_n^+$, and $j_{n,n+1}(j_{n\om}^+)=j_{n+1,\om}^+$.
\end{enumerate}
\end{sclmtwo}
\begin{proof}
Part \ref{item:first}: We have already established this below $\delta$
 and for the ordinals of $N_\om[*']$. But since $*'$
 is just the restriction of the iteration map
 for an iteration determined in a manner first-order over $N_\om$
 from $s$, and by the uniqueness of the wellfounded branch,
 it follows that the direct limit above is just $N_\om[*']$.
 
 Part \ref{item:second}:  $j_{n\om}\rest\OR$
 is the transitive collapse of $*'$ to $N_n$,
 and
 $j_{n\om}^+$ is just the ultrapower map
 associated to $\Ult(N_n,E)$, where $E$
 is the $(\delta,\delta)$-extender derived from $j_{n\om}$.
\end{proof}

Now $X\in N_\om[*']$, and by the previous claim,
we can fix $n<\om$ and $\bar{X}_n\in N_n^+$
such that $j_{n\om}^+(\bar{X}_n)=X$.
So $j^+_{n,n+1}(\bar{X}_n)=\bar{X}_{n+1}$
where $j_{n+1,\om}^+(\bar{X}_{n+1})=X$.
But $j_{n,\om}^+$ is a class of $N_n^+$ ,
and $j_{n,n+1}^+(j_{n,\om}^+)=j_{n+1,\om}^+$,
and $j_{n,n+1}^+(N_\om[*'])=N_\om[*']$.
Therefore $j_{n,n+1}^+(X)=X$. But $X\in N_n$
and $j_{n,n+1}\sub j_{n,n+1}^+$,
so $j_{n,n+1}(X)=X$, a contradiction.
\end{proof}

Now by \S\ref{sec:background}, we have:

\begin{clmtwo}
 We have:
 \begin{enumerate}
  \item $V_{\delta_\infty}^{M_\infty[*]}=V_{\delta_\infty}^{M_\infty}$,
  \item $\delta_\infty$ is Woodin in $M_\infty[\Sigma]$,
  \item $\delta$ is the least measurable cardinal of $M_\infty[\Sigma]$,
  \item $M_\infty[\Sigma]$ is ground of $L[x]$
  via a forcing $\CC\sub\delta_\infty$ which is $\delta_\infty$-cc
  in $M_\infty[\Sigma]$.
 \end{enumerate}
\end{clmtwo}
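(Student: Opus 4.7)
Parts (1), (2), and (4) follow essentially directly from Section \ref{sec:background} applied to the system set up in this proof. The family $\mathscr{F}=\mathscr{F}^{L[x]}_{\leq\delta}$, together with $\widehat{\mathscr{F}}$ and the $\mathscr{F}^P$, was shown to be $x$-good (with $\eta=G=\emptyset$), so all the general apparatus applies. For (1), one invokes the fixed-point/definability argument from Section \ref{sec:background}: any $X\in\pow(\alpha)^{M_\infty[*]}$ with $\alpha<\delta_\infty$ is $i_{PQ}$-fixed for cofinally many $P,Q\in\widehat{\mathscr{F}}$ with $P\dashrightarrow Q$, and hence $X=i_{P\infty}(\bar X)$ for some $\bar X\in P$, placing $X\in M_\infty$. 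For (2), Woodinness of $\delta_\infty$ in $M_\infty[*]$ follows from the hull construction $M\sub M^+$ with $j\sub j^+\colon M^+\to M_\infty[*]$, combined with the generalized Kunen inconsistency applied to the induced composite $h^+$, which any supposed counterexample to Woodinness would render computable in $L[x,G]$. For (4), the Schindler forcing $\CC$ is the complete Boolean subalgebra of $\BB^{M_\infty}$ generated by the values $\|k(\varphi)\|_{\BB}$ as $\varphi$ ranges over the infinitary propositional language, with $k\colon M_\infty\to M_\infty^{\mathscr{F}^{M_\infty}}$ a class of $M_\infty[*]$; then $\CC\sub\delta_\infty$, $\CC$ is $\delta_\infty$-cc in $M_\infty[*]$, and $x$ generates the desired generic filter with $M_\infty[*][G_x]=L[x]$.

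For (3), I would argue that $\delta$ is the least measurable of $M_\infty$, from which the conclusion for $M_\infty[*]$ will transfer via (1) together with the fact that any $M_\infty$-measure on $\kappa<\delta_\infty$ remains a measure in $M_\infty[*]$ (since $\pow(\kappa)^{M_\infty}=\pow(\kappa)^{M_\infty[*]}$). To rule out any $\kappa<\delta$ measurable in $M_\infty$: given such $\kappa$, iterating $M_1$ at the least measurable produces $Q\in\mathscr{F}$ (with $\delta^Q\leq\delta$) whose least measurable exceeds $\kappa$; then $\crit(i_{QM_\infty})>\kappa$, so $V_{\kappa+1}^{M_\infty}=V_{\kappa+1}^Q$, which contains no normal measure on $\kappa$. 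For measurability of $\delta$ itself in $M_\infty$: taking $N\in\widehat{\mathscr{F}}$ with $\delta^N=\delta$ (via Claim \ref{clm:iterate_height_delta}), $\delta$ sits as the limit of images of least measurables along the iteration into $M_\infty$, and the derived-measure construction applied to appropriate iteration maps yields an $M_\infty$-measure concentrating at $\delta$.

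The main obstacle I expect is part (3), and specifically the derived-measure analysis at $\delta$ itself: since $i_{NM_\infty}$ moves $\delta=\delta^N$ up to $\delta_\infty$ rather than fixing it, one must extract an $M_\infty$-measure on $\delta$ from the interaction of iteration maps with the bounded-below-$\delta$ structure of $N$, rather than directly from an extender on $\es^N$ with critical point $\delta$. A secondary concern is confirming that no new measure on $\kappa<\delta$ is introduced in passing from $M_\infty$ to $M_\infty[*]$; this should be handled either through the definability of $\Sigma_\infty$ over $M_\infty$ (so that a putative $M_\infty[*]$-measure would already be in $M_\infty$), or by a forcing-theoretic argument exploiting the $\delta_\infty$-cc of $\CC$ and the absence of measurables $\leq\delta$ in $L[x]$.
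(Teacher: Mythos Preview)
Your treatment of parts (1), (2), and (4) is correct and matches the paper exactly: the paper's entire proof is ``Now by \S\ref{sec:background}, we have:'', and your summaries of the relevant pieces of \S\ref{sec:background} are accurate.

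For part (3), your overall strategy (show no measurable below $\delta$, then show $\delta$ is measurable) is right, but both halves have gaps.

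\textbf{First gap (unacknowledged).} You write that ``iterating $M_1$ at the least measurable produces $Q\in\mathscr{F}$ (with $\delta^Q\leq\delta$) whose least measurable exceeds $\kappa$''. But a linear iteration of $M_1$ at its least measurable moves $\delta^{M_1}$ upward, and nothing prevents $\delta^Q>\delta$; indeed if $\delta=\delta^{M_1}$ (which the hypotheses allow), even one step gives $\delta^Q=j_U(\delta^{M_1})>\delta$, so $Q\notin\mathscr{F}$. The fix is to use the construction of Claim~\ref{clm:iterate_height_delta} rather than a naive linear iteration: given $\kappa<\delta$, run that meas-lim genericity iteration with a club $C\sub(\kappa,\delta)$ of singular $L[x]$-cardinals (or regulars, in the $\delta=\theta_0$ case). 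The output $Q$ then has $\delta^Q=\delta$ and every measurable of $Q$ in $C$, so $\mu^Q>\kappa$ and $Q\in\mathscr{F}$. Now $\crit(i_{QM_\infty})\geq\mu^Q>\kappa$, so $M_\infty|\mu^Q=Q|\mu^Q$ has no measurable, giving $\mu_\infty>\kappa$.

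\textbf{Second gap (you flag it, but your proposed route does not work).} You propose to derive an $M_\infty$-measure on $\delta$ from iteration maps, and you correctly note that $i_{NM_\infty}$ moves $\delta^N=\delta$ to $\delta_\infty$, not to $\delta$. There is no natural map here with critical point $\delta$, so the derived-measure idea stalls. The right argument avoids it entirely: you already know $M_\infty$ has a least measurable $\mu_\infty<\delta_\infty$ (as $\delta_\infty$ is Woodin there). From the $Q$'s produced above with $\mu^Q$ cofinal in $\delta$, the ordinals below $\mu_\infty$ in the direct limit are represented by threads eventually constant below some $\mu^Q$, hence $\mu_\infty=\sup_Q\mu^Q\leq\delta$. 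Combined with the lower bound this gives $\mu_\infty=\delta$, so $\delta$ is measurable in $M_\infty$ (and then in $M_\infty[*]$ by (1)) without ever building a measure by hand.
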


By the preceding claim, $\delta_\infty$ is a regular cardinal of $L[x]$
and $\delta_\infty>\delta$.
\begin{clmtwo}
 $\delta_\infty\leq(\delta^{++})^{L[x]}$.
\end{clmtwo}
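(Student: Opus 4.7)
The plan is to bound $\delta_\infty$ by writing it as a supremum of ordinals $\alpha_s$ indexed by finite ordinal tuples $s$, each with $L[x]$-cardinality at most $\delta^{+L[x]}$; this mimics the calculation giving $\delta_\infty=(\eta_0^+)^{L[x]}$ in the $\kappa_0^x$-case of \S\ref{sec:background}. For each $s\in\vec{\OR}$ such that $(P,s)\in\widetilde{\mathscr{D}}$ for some $P\in\widehat{\mathscr{F}}$, set $\alpha_s=\widetilde{i}_{Ps,\infty}(\gamma^P_s)$. Since $\gamma^P_s\in H^P_s$ (it is definable from $s^-$ in $P|\max(s)$) and $\gamma^P_s<\delta^P$, commutativity of the direct-limit maps together with the $(s,\widehat{\mathscr{F}})$-stability of sufficiently advanced $P$ makes $\alpha_s$ independent of the choice of such $P$, and elementarity gives $\alpha_s<\widetilde{i}_{Ps,\infty}(\delta^P)=\delta_\infty$.

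First I would verify $\delta_\infty=\sup_s\alpha_s$. The identification $M_\infty^{\widetilde{\mathscr{D}}}=M_\infty$ from \S\ref{sec:background} lets me write every ordinal $\alpha<\delta_\infty$ as $\widetilde{i}_{Qs,\infty}(\alpha')$ for some $(Q,s)\in\widetilde{\mathscr{D}}$ and $\alpha'\in H^Q_s$; since $\widetilde{i}_{Qs,\infty}(\delta^Q)=\delta_\infty$, elementarity forces $\alpha'<\gamma^Q_s$, and hence $\alpha<\widetilde{i}_{Qs,\infty}(\gamma^Q_s)=\alpha_s$.

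Second I would bound $|\alpha_s|^{L[x]}\leq\delta^{+L[x]}$ for fixed $s$, so that $\alpha_s<\delta^{++L[x]}$. The ordinal $\alpha_s$ lies in the direct limit of the restricted subsystem $\{(Q,s):(Q,s)\in\widetilde{\mathscr{D}},\ Q\in\widehat{\mathscr{F}}\}$ under the maps $\widetilde{i}^{QR}_{ss}$, which by \S\ref{sec:background} are all in $L[x]$. In $L[x]$, $|\widehat{\mathscr{F}}|^{L[x]}\leq\delta^{+L[x]}$, since each $Q\in\widehat{\mathscr{F}}$ is coded by $Q|\delta^Q\in L[x]$ of cardinality $\leq\delta$ and $L[x]$ satisfies GCH, giving at most $2^\delta=\delta^{+L[x]}$ such codes. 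Each $|H^Q_s|^{L[x]}\leq\gamma^Q_s<\delta$, so by Replacement in $L[x]$ the direct limit has $L[x]$-cardinality at most $\delta\cdot\delta^{+L[x]}=\delta^{+L[x]}$; since $\alpha_s$ is an ordinal element of that transitive direct limit, $|\alpha_s|^{L[x]}\leq\delta^{+L[x]}$, whence $\alpha_s<\delta^{++L[x]}$.

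Combining the two steps, $\delta_\infty=\sup_s\alpha_s\leq\delta^{++L[x]}$, as the supremum of ordinals each strictly below $\delta^{++L[x]}$. The main subtlety I anticipate is in step one — namely that $L[x]$ computes $\widetilde{\mathscr{D}}$ correctly and that $M_\infty^{\widetilde{\mathscr{D}}}=M_\infty$ there — both of which rest on the $x$-goodness of $\mathscr{D}$ already established earlier in the proof; the cardinality count of step two is then routine from GCH in $L[x]$ and Replacement.
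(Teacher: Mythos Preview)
Your argument is correct and is essentially the paper's own proof, just written out in more detail: the paper notes that $\delta_\infty=\sup_s\gamma^{M_\infty}_s$ and that each $\gamma^{M_\infty}_s<(\delta^{++})^{L[x]}$ ``by cardinality considerations, since $L[x]\sats\card(\mathscr{F})\leq\delta^+$''. Your $\alpha_s$ is exactly $\gamma^{M_\infty}_{s^*}$ (for $s$-stable $P$), and your cardinality bound via $|\widehat{\mathscr{F}}|\leq\delta^+$ and $|H^Q_s|<\delta$ in $L[x]$ is precisely the unwinding of that one-line remark.
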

\begin{proof}
Equivalently,  for each $s\in\mathscr{I}^{<\om}$,
 we have
 $\gamma^{M_\infty}_s<(\delta^{++})^{L[x]}$. But this is just by cardinality considerations, since
  $L[x]\sats$``$\card(\mathscr{F})\leq\delta^+$''.
\end{proof}

By the previous two claims,
either $\delta_\infty=(\delta^+)^{L[x]}$
or $\delta_\infty=(\delta^{++})^{L[x]}$. So Claim \ref{clm:delta_infty>delta+}
below completes the proof of the theorem:

\begin{rem}
Assume constructible Turing determinacy (see Definition \ref{dfn:con_Turing}).

Using genericity iterations, one easily establishes the following.
Let 
$\eta\in\OR$. Then there is a cone of reals $x$
such that $L[x]\sats\varphi(\eta)$
iff there is a non-dropping iterate $N$ of $M_1$
with $\delta^N<\om_1$ such that for all non-dropping iterates
$P$ of $N$ with $\delta^P<\om_1$, we have $i_{NP}(\eta)=\eta$,
and $N\sats\Coll(\om,\delta^N)\forces\varphi(\eta)$.

Note that
there is also a cone of reals $z$ such that there is $(N,G)$
such that $N$ is a non-dropping iterate of $M_1$
and $G$ is $(N,\Coll(\om,\delta^N))$-generic
and $L[z]=N[G]$. (The existence of $(N,G)$ is just an assertion
in $L[z]$ about a real parameter coding $M_1|\delta^{M_1}$,
and using genericity iterations, one can easily see that
there are at least cofinally many such reals $z$.)
Let us say that $z$ is \emph{sufficient} if $z$ is in this cone.
\end{rem}

\begin{clmtwo}[Steel]\label{clm:delta_infty>delta+}
Assuming constructible Turing determinacy,
$\delta_\infty>(\delta^{+})^{L[x]}$.
\end{clmtwo}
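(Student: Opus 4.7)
The plan is to argue by contradiction. Suppose $\delta_\infty \leq (\delta^+)^{L[x]}$, so by the preceding claim $\delta_\infty = (\delta^+)^{L[x]}$. The first step is to recast this identity as a first-order statement $\varphi$ about $L[x]$ with parameters $\delta$ and $a = M_1|\delta^{M_1}$, both available since $M_1 \sub L[x]$; by the uniform definability of $\mathscr{F}$, $M_\infty$ and $\delta_\infty$ from these parameters established in \S\ref{sec:background}, this reformulation is routine. For notational concreteness I reduce to the case $\delta = \omega_1^{L[x]}$; other cases with $\delta \leq \theta_0$ are handled identically using the Silver indiscernibles of $L[a,z]$ in place of $\omega_1^{L[a,z]}$.

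Next I invoke constructible Turing determinacy (Definition \ref{dfn:con_Turing}) applied to $(a,\varphi)$. Since the hypothesis is preserved on passing to Turing-higher reals, as $\mathscr{F}^{L[y]}_{\leq\omega_1^{L[y]}}$ and its direct limit are uniformly defined in $L[a,y]$ from $a$, CTD delivers a cone of reals $z\geq_T a$ such that $L[a,z]$ satisfies the analog $\delta_\infty^{L[a,z]} = \omega_2^{L[a,z]}$. Then, using the second paragraph of the Remark preceding the claim, I choose $z$ in this cone which is also sufficient, so $L[a,z] = P[H]$ for a non-dropping iterate $P$ of $M_1$ with $\delta^P<\omega_1$ and $H$ being $(P,\Coll(\om,\delta^P))$-generic. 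Since $\Coll(\om,\delta^P)$ preserves cardinals above $\delta^P$, this yields $\omega_1^{L[a,z]} = (\delta^P)^{+P}$ and $\omega_2^{L[a,z]} = (\delta^P)^{++P}$, and the system $\mathscr{F}^{L[a,z]}_{\leq\omega_1^{L[a,z]}}$ admits concrete analysis via iterates internal to $P$ together with iterates produced using the generic $H$.

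The main obstacle, and the heart of Steel's argument, is to contradict $\delta_\infty^{L[a,z]} = \omega_2^{L[a,z]}$ by exhibiting enough iterates of $M_1$ in $L[a,z]$ whose direct-limit images push the Woodin cardinal strictly past $\omega_2^{L[a,z]}$. My approach is: first, apply Claim \ref{clm:iterate_height_delta} internally to $P$ at the regular cardinal $(\delta^P)^{+P}$ to produce iterates $M\in P$ with $\delta^M = \omega_1^{L[a,z]}$ and $M|\delta^M\in P$; second, use the $H$-generic together with standard genericity iteration inside $L[a,z]$ to build further iterates of $M_1$ not already present in $P$, using subsets of $\delta^P$ coded by $H$ that witness extender algebra axioms not satisfied in $P$; and finally tally that the resulting cofinal sequence of images $i_{M,\infty}(\delta^M)$ in $M_\infty$ has order type at least $\omega_3^{L[a,z]} = (\omega_1^{L[a,z]})^{++L[a,z]}$. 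This would contradict $\delta_\infty^{L[a,z]} = \omega_2^{L[a,z]}$. The delicate point is this final cofinality count, and that is where the bulk of the technical work will lie.
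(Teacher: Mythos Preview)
Your setup via constructible Turing determinacy and reduction to a sufficient real is sound and matches the paper's framing. But the heart of your proposal --- the ``final cofinality count'' --- is not just technically delicate, it is where the entire content of the argument lies, and the sketch you give does not supply a mechanism. Note first that $i_{M,\infty}(\delta^M)=\delta_\infty$ for every $M\in\mathscr{F}$, so the literal sequence you propose to tally is constant; presumably you mean something like $\sup i_{M,\infty}``\delta^M$ or the images of some fixed $\gamma<\delta$, but even so, exhibiting iterates outside $P$ gives no direct lower bound on the ordertype of such images. The cardinality estimate $\card(\mathscr{F})\leq\omega_2^{L[a,z]}$ is compatible with either value of $\delta_\infty$, and nothing in your outline distinguishes the two.

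The paper's proof takes a completely different route. Steel does not count iterates at all; instead he uses $(\utilde{\delta}^1_2)^{L[x]}$ as an invariant whose position relative to the filtration $\left<\gamma^{M_\infty}_{s_n}\right>_{n<\om}$ (where $s_n=(\aleph_0,\ldots,\aleph_n)$) is determined by $\Coll(\om,\delta^N)$ over any witness $N$ to sufficiency. Assuming $\delta_\infty=\omega_2^{L[x]}$, fix the least $n$ with $(\utilde{\delta}^1_2)^{L[x]}<\gamma^{M_\infty}_{s_n}$. Now pass to a ccc (almost disjoint coding) extension $L[x,y]$ in which a wellorder of $\omega_1$ of type $\gamma^{M_\infty}_{s_{n+1}}$ becomes $\Delta^1_2(\{x,y\})$, forcing $(\utilde{\delta}^1_2)^{L[x,y]}>\gamma^{M_\infty}_{s_{n+1}}$. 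The key point is that $M_\infty$ is \emph{unchanged}: by Claim~\ref{clm:F_dense} applied to the ccc ground $L[x]$ of $L[x,y]$, the system $\mathscr{F}^{L[x]}$ is dense in $\mathscr{F}^{L[x,y]}$. But $(x,y)$ is also sufficient, so the same $n$ must work in $L[x,y]$ --- contradicting what the coding just achieved. The missing idea in your proposal is precisely this: an invariant that is rigid under change of sufficient real yet movable by ccc forcing, together with the stability of $M_\infty$ under such forcing.
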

\begin{proof}
For the moment we assume that $\delta=\om_1^{L[x]}$
and $x$ is sufficient, as witnessed by $N,G$. So $L[x]=N[G]$.

Now suppose the claim fails in this case, so $\delta_\infty=\om_2^{L[x]}$.
Because $M_\infty[*]$ is definable without parameters
over $L[x]$, the fact that $\delta_\infty^{L[x]}=\om_2^{L[x]}$
is just an assertion in the langauage of set theory without parameters
over $L[x]$, and is therefore forced by $\Coll(\om,\delta^N)$
over $N$.
So letting $\xi=((\delta^N)^{++})^N=\om_2^{L[x]}$,
we have
\[ 
N\sats\text{``}\Coll(\om,\delta^N)\forces\delta_\infty=\check{\xi}\text{''}.\]
For $n<\om$ let $s_n=(\aleph_0,\ldots,\aleph_n)$.
Let $n<\om$ be least such that 
$(\undertilde{\delta}^1_2)^{L[x]}<\gamma^{M_\infty}_{s_n}$;
this exists since
\[ 
(\undertilde{\delta}^1_2)^{L[x]}<\om_2^{L[x]}=\delta_\infty=
\sup_{n<\om}\gamma^{M_\infty}_{s_n}.\]

Work in $L[x]$.
 Let $W\sub\om_1$ code a wellorder of 
$\om_1$
in ordertype $\gamma^{M_\infty}_{s_{n+1}}$.
Let $\PP$ be almost disjoint forcing for coding $W$ with a real,
with respect to the standard enumeration of $\RR^{L[x]}$.
Recall that $\PP$ is $\sigma$-centered
and hence ccc.
Let $H$ be $(L[x],\PP)$-generic, and $y$ the generic real,
so $L[x,H]=L[x,y]$. Note then that $W$ is $\Delta^1_2(\{(x,y)\})$ in $L[x,y]$
(in the codes given by the $L[x]$-constructibility order of $\RR^{L[x]}$),
so 
\[ (\gamma^{M_\infty}_{s_{n+1}})^{L[x]}<(\undertilde{\delta}^1_2)^{L[x,y]}.\]

But $M_\infty^{L[x]}=M_\infty^{L[x,y]}$,
because (i) $\om_1^{L[x]}=\delta=\om_1^{L[x,y]}$
and (ii) $\mathscr{F}^{L[x]}$ is dense in $\mathscr{F}^{L[x,y]}$,
by  Claim \ref{clm:F_dense} and because $L[x]\sats$``$\PP$ is 
$\delta$-cc''.
Therefore
\begin{equation}\label{eqn:gamma_n+1<bfdelta} 
(\gamma^{M_\infty}_{s_{n+1}})^{L[x,y]}<(\undertilde{\delta}^1_2)^{L[x,y]}.
\end{equation}
But $x\leq_T(x,y)$, so $(x,y)$ is also sufficient. Let $N',G'$ witness this,
so $L[x,y]=N'[G']$.
Because
\[ 
N\sats\Coll(\om,\delta^N)\forces\undertilde{\delta}^1_2<\gamma^{M_\infty}_{s_n},
\]
and $i_{M_1N}(s_n)=s_n=i_{M_1N'}(s_n)$, we have
\[
N'\sats\Coll(\om,\delta^{N'})\forces
\undertilde{\delta}^1_2<\gamma^{M_\infty}_{s_n}.
\]
This contradicts line (\ref{eqn:gamma_n+1<bfdelta}).

Now consider the general case. By constructible Turing determinacy,
we can fix a sufficient real $x'$. So the preceding argument
applies to $L[x']$ and $\delta'=\om_1^{L[x]}$. But for each
$N\in\widehat{\mathscr{F}}'$ (i.e. with $\delta^N\leq\delta'$ and $x'$ being extender algebra
generic over $N$, and hence $\delta^N=\delta'=\om_1^{L[x']}$), we have that $M_\infty^{L[x']}=M_\infty^{\mathscr{F}^N}$ (where $\mathscr{F}^N$ is defined
inside $N$ as before), but also $(\delta^+)^{N}=\omega_2^{L[x]}$ and $(\delta^{++})^{N}=\omega_3^{L[x]}$.
So $N\sats$``$(\delta^N)^{++}$ is the Woodin of $M_\infty^{\mathscr{F}^N}$'',
and this likewise implies the claim.
\end{proof}
This completes the proof of Theorem  \ref{tm:delta-cc_mantle}.\end{proof}

In the case $\delta=\om_1^{L[x]}$ (and constructible Turing determinacy), since $\delta$ is the least measurable
and $\om_3^{L[x]}$ the Woodin, it is natural to ask:
\begin{ques}What is $\om_2^{L[x]}$ in $M_\infty$?
\end{ques}
It is not the least ${<\delta_\infty}$-strong cardinal $\kappa$ of $M_\infty$,
because $\kappa<\om_2^{L[x]}$ (see \S\ref{sec:strong_cardinal}).
Of course Woodin showed that $\omega_2^{L[x]}$ \emph{is}
Woodin in $\HOD^{L[x]}$, and the real mystery and long-standing open question is:
\begin{ques}
	What is $\HOD^{L[x]}$?
	\end{ques}

The analysis of $\mathscr{M}^{L[x]}_\eta$ for $\eta$
such that $L[x]\sats$``$\eta$ is a limit cardinal of uncountable cofinality''
is a simplification of the foregoing argument:

\begin{proof}[Proof Sketch for Theorem \ref{tm:unctbl_cof}]
	Suppose first that $\eta\leq\kappa_0^x$.
	We already dealt with some further aspects of this case in \S\ref{sec:background} (in particular
	that $M_\infty[*]$ is an internally $\delta_\infty$-cc ground of $L[x]$
	and $\delta_\infty=(\eta^+)^{L[x]}$  etc).
The complications to do with getting and keeping all iterates
	of height $\delta$ in the previous proof are  avoided, as standard
	pseudo-comparisons, pseudo-genericity iterations and their variants
	terminate by the next successor cardinal above everything in question.
	However, the proof of Claim \ref{clm:Mmm_sub_M_infty*} does not
	quite go through as before. Subclaims \ref{sclm:eventual_move_X} and \ref{sclm:X_in_H^N_s} 
	are still fine. If $\eta$ is inaccessible in $L[x]$, then the rest adapts
	easily. But if $\eta$ is singular, and we define $\Tt^N$ as in the paragraph
	after Subclaim \ref{sclm:X_in_H^N_s}, then it seems that $\Tt^N$ might
	have length ${\geq\eta}$, and hence lead out of $\mathscr{F}$. 
	So instead of this, working in $L[x]$, let $\Gamma$ be the set of
	all limit cardinals of $L[x]$ which are ${<\eta}$,
	and for each $\gamma\in\Gamma$,
	let $N_\gamma$ be the result of simultaenously comparing all iterates of $M_1$ in $L_\gamma[x]$ and iterating to make $x$ extender algebra generic.
	Then $\delta^{N_\gamma}=(\gamma^+)^{L[x]}$
	and $(\gamma_0<\gamma_1)\Rightarrow (N_{\gamma_0}\dashrightarrow N_{\gamma_1})$, and $\left<N_\gamma\right>_{\gamma\in\Gamma}$ is cofinal
	in $\mathscr{F}_{<\eta}$.  Moreover, $N_\gamma$ is a ${<\eta}$-ground of $L[x]$, so $X\in N_\gamma$, and in fact, as in  Subclaim \ref{sclm:X_in_H^N_s}, we can fix $s\in\vec{\mathscr{I}^x}$ such that
	$X\in H^{N_\gamma}_s$ for each $\gamma$. So by Subclaim \ref{sclm:eventual_move_X}, we can (in $L[x]$) define
	$f:\Gamma\to\Gamma$ by setting $f(\gamma)=$ the least $\gamma'$
	such that
	\[ i_{N_{\gamma}N_{\gamma'}}(X)=i^{N_{\gamma}N_{\gamma'}}_{ss}(X)\neq X.\]
    Now let $\xi<\eta$ be a limit point of $\Gamma$ such that $f``(\Gamma\inter\xi)\sub\xi$. Let $M^\xi_\infty[\Sigma^\xi_\infty]$ be the strategy
    mouse given by the direct limit of $\mathscr{F}_{<\xi}$.
    Then $M^\xi_\infty[\Sigma^\xi_\infty]$ is a ${<\eta}$-ground,
    so $X\in M^\xi_\infty[\Sigma^\xi_\infty]$.
    We can now proceed basically as before for a contradiction.
    (Fix $\gamma_0\in\Gamma\inter\xi$ such that $\xi$
    is $(N_{\gamma_0},\mathscr{F}_{<\xi})$-stable. Then the direct limit system leading to $M^\xi_\infty[\Sigma^\xi_\infty]$
    can be defined over $N$, from parameter $\xi$, uniformly
    in $N\in\mathscr{F}_{<\xi}$ such that $N_{\gamma_0}\dashrightarrow N$. So we can proceed much like before.)
	We
	leave the remaining details to the reader.
	
	Now if $\kappa_\alpha^x<\eta\leq\kappa_{\alpha+1}^x$,
	then it is essentially the same, with $\kappa_{\alpha+1}^x$ replacing $\kappa_0^x$. If $\eta=\kappa_\alpha^x$ for a limit $\alpha$,
	then appeal to \S\ref{sec:M_1_kappa-mantle}
	or argue as there.
\end{proof}

\begin{rem}
	It is of course natural to ask what happens when
	$\eta$ is a limit cardinal of $L[x]$ with $\cof^{L[x]}(\eta)=\om$.
	In this case, we still have $M_\infty[*]\sub \mathscr{M}_\eta^{L[x]}$
	 as before. But it can be that
	$M_\infty[*]\psub\mathscr{M}_\infty^{L[x]}$. 
	For  the most obvious example, consider $\eta=\aleph_\om^{L[x]}$.
	Then note that for any $<\eta$-ground $W$ of $L[x]$,
	$W$ and $L[x]$ agree on a tail of cardinals below $\aleph_\om^{L[x]}$,
	and therefore  $\left<\aleph_n^{L[x]}\right>_{n<\om}\in\mathscr{M}_\eta^{L[x]}$.
	This sequence is, moreover, Prikry generic over $M_\infty$,
	hence over $M_\infty[*]$. (This should be compared to the situation with $L[U]$ and the intersection of its
	finite iterates being  a Prikry extension of its $\om$th iterate.)
	The author does not know whether $\mathscr{M}_\eta^{L[x]}\sats\ZFC$
	when $\eta$ is a limit cardinal  of $L[x]$ with $\cof^{L[x]}(\eta)=\om$.
	\end{rem}

\section{Strategy extensions of $M_1$}\label{sec:strategy_extensions}

Recall the definition of a \emph{nice} extension $M_1[\Sigma]$ of $M_1$ (Definition \ref{dfn:nice_extension}). 
We saw in \S\ref{sec:background} Woodin's Theorem \ref{tm:Woodin_nice_extension}, that if $\kappa$
is the least $M_1$-indiscernible, then $M_1[\Sigma_{\kappa}]$
is nice (recall $\Sigma_{\kappa}=\Sigma_{M_1}\rest(M_1|\kappa)$).\footnote{Moreover,
we have the further property mentioned at the end of \S\ref{sec:background},
that non-dropping iteration maps on $M_1$ extend to those on $M_1[\Sigma_\kappa]$.}
Woodin also showed that, however, the model $L[M_1,\Sigma_{M_1}]$
given by fully closing $M_1$ under iteration strategy
constructs $M_1^\#$, and hence, in fact, $L[M_1,\Sigma_{M_1}]=L[M_1^\#]$. We now compute fairly sharp bounds on how much strategy
is needed to reach $M_1^\#$:

\begin{proof}[Proof of Theorem \ref{tm:nice_extensions}]\
	
	Part \ref{item:kappa^+_nice} ($M_1[\Sigma_{(\kappa^+)^{M_1}}]=M_1[\Sigma_{\kappa}]$):
		
			Let $\Tt\in M_1|(\kappa^+)^{M_1}$ be a maximal
	tree on $M_1$. Let $\gamma=\cof^{M_1}(\lh(\Tt))$.
	So $\gamma\leq\kappa$. In fact, $\gamma<\kappa$.
	For suppose $\gamma=\kappa$ and let $\delta=\delta(\Tt)$,
	so $\cof^{M_1}(\delta)=\kappa$.
	Let $j:M_1\to M_1$ be elementary with $\crit(j)=\kappa$
	and $j(\kappa_\alpha^{M_1})=\kappa_\alpha^{M_1}$ for $\alpha\geq\om$.
	Note then that $j(\Tt)\in M_1$ and $j(\Tt)$
	is a maximal tree on $M_1$. And $j$ is discontinuous
	at $\gamma$ and $\delta$. Let $\delta'=\sup 
	j``\delta$, so $\delta'<j(\delta)=\delta(j(\Tt))$.
	Let $N=L[M(\Tt)]$, so $j(N)=L[M(j(\Tt))]$.
	Let $s\in\vec{\mathscr{I}^{M_1}}$ with $\kappa_\om\leq\min(s)$.
	Then $\gamma^N_s<\delta$,
	and $j(s)=s$ and $j(\gamma^N_s)=\gamma^{j(N)}_s$.
	Therefore  letting $\Gamma=\mathscr{I}^{j(N)}\cut\kappa_\om$, we have
	\[ \delta'=\sup_{s\in\vec{\Gamma}}\gamma^{j(N)}_s=\delta^{j(N)}=j(\delta),\]
	a contradiction.
	
	Also, $\cof^{M_1}(\lh(\Tt))\geq\delta^{M_1}$.
	For suppose otherwise. Note that the existence
	of a maximal tree $\Uu$ on $M_1$ with $\cof(\lh(\Uu))<\delta^{M_1}$
	is a first order fact true in $M_1$,
	so there must be some such $\Uu\in M_1|\kappa$.
	But then it follows that $M_1[\Sigma_{\kappa}]$
	satisfies ``$\cof(\delta^{M_1})=\cof(\lh(\Uu))<\delta^{M_1}$, contradicting
	that this is a nice extension.
	
	So $\delta^{M_1}\leq\gamma=\cof^{M_1}(\lh(\Tt))<\kappa$.
	So working in $M_1$,
	we can fix some large $\eta\in\OR$
	and find an elementary $\pi:\bar{M}\to M_1|\eta$
	with $\Tt\in\rg(\pi)$ and $\rg(\pi)$ cofinal in $\lh(\Tt)$,
	and $\card(\bar{M})<\kappa$,
	and $\delta^{M_1}<\crit(\pi)$.
	Let $\pi(\bar{\Tt})=\Tt$.
	
	Now one can at this point argue that $\bar{\Tt}$ is maximal,
	by taking an appropriate ultrapower of any putative Q-structure
	by the extender derived from $\pi$. But actually it is easier
	to just deal with this  impossible case.
	So suppose $\bar{\Tt}$ is non-maximal. Then $b=\Sigma_{M_1}(\bar{\Tt})\in M_1$.
	But then note that $b'=\bigcup\pi``b\in M_1$ is a $\Tt$-cofinal
	branch, and since $\cof^{M_1}(\lh(\Tt))$ is uncountable,
	it follows that $M^\Tt_{b'}$ is wellfounded,
	so $b'=\Sigma_{M_1}(\Tt)$, which suffices.
	Now suppose instead that $\bar{\Tt}$ is maximal.
	Since $\bar{\Tt}\in M_1|\kappa$,
	we get $b=\Sigma_{M_1}(\bar{\Tt})\in M_1[\Sigma_{\kappa}]$.
	But by maximality, in $M_1[\Sigma_{\kappa}]$,
	$\lh(\bar{\Tt})$ has cofinality $\delta^{M_1}$,
	which is uncountable there, and therefore we can
	again define $b'=\bigcup\pi``b$, and get $b'=\Sigma_{M_1}(\Tt)$, as desired.

Part \ref{item:kappa^++1_not_nice}:
	We construct a  normal tree $\Tt$ on $M_1$,
	with  $\Tt$ definable
	over $M_1|(\kappa^+)^{M_1}$,
	$\lh(\Tt)=\delta(\Tt)=(\kappa^+)^{M_1}$,
	and such that $M_1^\#\in M_1[b]$, where $b=\Sigma_{M_1}(\Tt)$.

	The tree $\Tt$ is just that given by $M_1|\delta^{M_1}$-genericity
	iteration after linearly iterating the least measurable out to $\kappa$.
	For certainly $\Tt$ is maximal, with $\lh(\Tt)=\delta(\Tt)=\kappa^{+M_1}$.
	Let $P=L[M(\Tt)]$.
	
	We claim that $\mathscr{I}^P=\mathscr{I}^{M_1}\cut\{\kappa\}$.
	For
	\[ M_1=\Hull_{\Sigma_1}^{M_1}(\mathscr{I}^{M_1}), \]
	so using the forcing relation and since $\BB^P\sub\delta^P=(\kappa^+)^{M_1}$,
	\[ P=\Hull_{\Sigma_1}^P(\delta^P\cup\mathscr{I}^{M_1}\cut\{\kappa\}), \]
	but $\mathscr{I}^{M_1}\cut\{\kappa\}$ are
	$M_1$-indiscernibles with respect to the parameter $\kappa$,
	and therefore $\mathscr{I}^{M_1}\cut\{\kappa\}$
	are $P$-indiscernibles with respect to all parameters in $\delta^P$.
	So by  uniqueness, $\mathscr{I}^P=\mathscr{I}^{M_1}\cut\{\kappa\}$.
	But $\mathscr{I}^P=i_{M_1P}``\mathscr{I}^{M_1}$.
	Therefore
	\[ i_{M_1P}(\kappa_n^{M_1})=\kappa_{n+1}^{M_1} \]
	for all $n<\om$ (and $i_{M_1P}(\kappa_\alpha^{M_1})=\kappa_\alpha^{M_1}$
	for all $\alpha\geq\om$).
	
	Now in $M_1[b]$, we have $i_{M_1P}=i^\Tt_b$,
	and since $i^\Tt_b(\kappa_n^{M_1})=\kappa_{n+1}^{M_1}$,
	we can compute $\left<\kappa_n^{M_1}\right>_{n<\om}$,
	and therefore compute $M_1^\#$.
\end{proof}

\begin{rem} So let $\Tt,b$ be as just discussed,
and $\Uu$ the tree on $M_1$ iterating out to $M_\infty^{\mathscr{F}_{<\kappa}}$ and  $c=\Sigma_{M_1}(\Uu)$.
Then both $\Tt,\Uu$ are definable over $M_1|(\kappa^+)^{M_1}$,
but $M_1[c]$ is a nice extension, while $M_1[b]=L[M_1^\#]$.
\end{rem}

Instead of just adding a strategy fragment of the form $\Sigma_\eta$
to $M_1$, one can recursively close under strategy
 (giving the hierarchy of $L[M_1,\Sigma_{M_1}]$),
 and ask how large $\alpha$ needs to be to construct $M_1^\#$.
 Usually, one must also choose some appropriate manner of feeding in $\Sigma_{M_1}$, so that no novel information is accidentally encoded.
But we can ignore this here:
 Woodin showed that $L_\kappa[M_1,\Sigma_{M_1}]$ does not 
 construct $M_1^\#$, where $\kappa=\kappa_0^{M_1}$,
 but  we now show that actually, this is the same
 as $L_\kappa[M_1,\Sigma_\kappa]$, so this model is already closed:
 
\begin{tm}
Suppose that $M_1^\#$ exists and is $(\om,\om_1+1)$-iterable.
	Let $\kappa=\kappa_0^{M_1}$. Then
$L_\kappa[M_1,\Sigma_{M_1}]=L_\kappa[M_1,\Sigma_\kappa]$.
	\end{tm}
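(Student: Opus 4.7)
The plan is to prove by induction on $\alpha \leq \kappa$ that $L_\alpha[M_1, \Sigma_{M_1}] = L_\alpha[M_1, \Sigma_\kappa]$. The inclusion $\supseteq$ is trivial since $\Sigma_\kappa \subseteq \Sigma_{M_1}$, and limit stages follow immediately from the induction hypothesis. The heart of the argument is the successor step: given $L_\alpha[M_1, \Sigma_{M_1}] = L_\alpha[M_1, \Sigma_\kappa]$ for some $\alpha < \kappa$, I must show that any oracle call to $\Sigma_{M_1}$ on an iteration tree $\mathcal{T} \in L_\alpha[M_1, \Sigma_{M_1}]$ is already computable from $M_1$ and $\Sigma_\kappa$ at stage $\alpha+1$.

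The key observation I will exploit is that $\kappa = \kappa_0^{M_1}$ is an $M_1$-indiscernible lying in the passive part of $M_1$ (above $\delta^{M_1}$, $M_1$ is simply the $L$-hierarchy over $M_1|\delta^{M_1}$), so $\kappa$ is inaccessible in $M_1$. Therefore any $\mathcal{T} \in L_\alpha[M_1, \Sigma_{M_1}]$ with $\alpha<\kappa$ has hereditary cardinality $<\kappa$; in particular every extender $E^\mathcal{T}_\beta$ used in $\mathcal{T}$ satisfies $\lh(E^\mathcal{T}_\beta)<\kappa$, and hence $\crit(E^\mathcal{T}_\beta)<\kappa$ as well. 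Writing $\kappa_\beta := i^\mathcal{T}_{0\beta}(\kappa)$, I will ``project'' $\mathcal{T}$ to a normal tree $\bar{\mathcal{T}}$ on $M_1|\kappa$ using the same sequence of extenders, the same tree order and the same drop structure, and verify inductively that $M^{\bar{\mathcal{T}}}_\beta = M^\mathcal{T}_\beta|\kappa_\beta$ with $\lh(E^\mathcal{T}_\beta)<\kappa_\beta$, so that $\bar{\mathcal{T}}$ is well-defined. Being first-order definable from $\mathcal{T}$ and $M_1|\kappa$, $\bar{\mathcal{T}}$ already lies in $L_\alpha[M_1, \Sigma_\kappa]$.

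Next I will argue that $\Sigma_{M_1}(\mathcal{T}) = \Sigma_\kappa(\bar{\mathcal{T}})$. Since the two trees share a tree order, cofinal branches correspond. For any such branch $b$, the direct limit $M^{\bar{\mathcal{T}}}_b$ is the initial segment $M^\mathcal{T}_b|\kappa_b$ of $M^\mathcal{T}_b$. Because the tall part of $M^\mathcal{T}_b$ above $\delta^{M^\mathcal{T}_b}$ is purely the constructible closure over the Woodin segment, wellfoundedness of $M^\mathcal{T}_b$ is equivalent to wellfoundedness of $M^{\bar{\mathcal{T}}}_b$. Combined with $1$-smallness of $M_1$ (and $M_1|\kappa$), which ensures that both $\Sigma_{M_1}$ and $\Sigma_\kappa$ are pinned down by Q-structures -- or equivalently by the unique wellfounded branch with the correct Q-structure -- this yields $\Sigma_{M_1}(\mathcal{T}) = \Sigma_\kappa(\bar{\mathcal{T}})$, completing the successor step.

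The step I expect to be the main obstacle is verifying this coherence of branch selection: one must check that the Q-structures identified by $\bar{\mathcal{T}}$ agree with those used by $\Sigma_{M_1}$ on $\mathcal{T}$, and rule out any pathology introduced by the ``tall'' part of $\mathcal{T}$'s models and by potential drops in the tree. Given that $M_1$'s structure above $\delta^{M_1}$ is just $L[M_1|\delta^{M_1}]$ and that all relevant critical points and extender lengths lie below $\kappa$, this reduces to a standard calculation. With this in hand, the induction goes through, and taking $\alpha = \kappa$ gives $L_\kappa[M_1, \Sigma_{M_1}] = L_\kappa[M_1, \Sigma_\kappa]$, as required.
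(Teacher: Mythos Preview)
Your argument has a genuine gap, rooted in a misreading of the predicate $\Sigma_\kappa$. By definition $\Sigma_\kappa=\Sigma_{M_1}\rest(M_1|\kappa)$ is the restriction of $\Sigma_{M_1}$ to trees which are \emph{elements of} $M_1|\kappa$; it is not an iteration strategy for the premouse $M_1|\kappa$ applicable to arbitrary trees. Your projected tree $\bar{\Tt}$ carries exactly the same data (extenders, tree order) as $\Tt$, so $\bar{\Tt}\in M_1|\kappa$ iff $\Tt\in M_1|\kappa$. The whole difficulty is the case where $\Tt$ was built using strategy information and hence $\Tt\notin M_1$ at all; then $\Sigma_\kappa$ simply tells you nothing about $\bar{\Tt}$, and your expression ``$\Sigma_\kappa(\bar{\Tt})$'' is undefined.

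Your fallback to Q-structures also fails in exactly this case. When $\Tt$ is \emph{maximal} (i.e.\ $L[M(\Tt)]\sats$``$\delta(\Tt)$ is Woodin''), there is no Q-structure, and $1$-smallness does not pin down the branch from the tree alone; the correct branch is genuine extra information. For short trees your Q-structure argument is fine, but those are not the obstacle.

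The paper's method is quite different and addresses precisely this point. One first finds cofinally many dl-stable limit cardinals $\eta<\kappa$; for such $\eta$, the iteration map $j:M_1\to M_\infty^{\Fff^{M_1}_{<\eta}}$ extends to $j^+:M_1[\Sigma_\eta]\to M_\infty[\Sigma_\infty]$ with $j^+(\kappa)=\kappa$. Given a maximal $\Tt\in M_1[\Sigma_\eta]|\kappa$, one pushes it forward: $j^+(\Tt)\in M_\infty[\Sigma_\infty]|\kappa\sub M_1|\kappa$ (the last inclusion because $M_\infty[\Sigma_\infty]$ is a definable inner model of $M_1$). Now $j^+(\Tt)$ \emph{is} in the domain of $\Sigma_\kappa$, so its branch $c$ is available, and from $c$ one recovers $b=\Sigma_{M_1}(\Tt)$. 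The essential idea you are missing is this use of $j^+$ to transport the tree back inside $M_1$.
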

\begin{proof}
We will show that for each limit cardinal $\eta$ of $M_1$
	with $\delta^{M_1}<\eta<\kappa$,
	and each maximal $\Tt\in M_1[\Sigma_\eta]|\kappa$ on $M_1$,
	there is $\zeta<\kappa$
	such that $\Sigma_{M_1}(\Tt)\in M_1[\Sigma_\zeta]$. Clearly this suffices.

	Recall \emph{dl-stable} from \S\ref{sec:background} (adapted
	in the obvious way here).
	
	\begin{clmthree}
		There are cofinally many dl-stable $M_1$-limit cardinals $\eta<\kappa$.
	\end{clmthree}
	\begin{proof}
		Fix a limit cardinal $\eta\in(\delta^{M_1},\kappa)$ of $M_1$
		and $s\in\vec{\mathscr{I}^{M_1}}$
		such that $\eta=t^{M_1}(s)$.
		Given $P$ with $M_1\dashrightarrow P$ and $P|\delta^P\in M_1|\kappa$,
		define a sequence $\left<\eta^P_n\right>_{n<\om}$ as follows.
		Set $\eta^P_0=t^P(s)=i_{M_1P}(\eta)$, which is a limit cardinal
		of $P$ and $\delta^P<\eta_0^P<\kappa=\kappa_0^P$,
		and $\eta_0^{M_1}=\eta\leq\eta_0^P$.
		Note that for cofinally many $Q\in\mathscr{F}^{M_1}_{<\eta_0^{M_1}}$,
		$M_1$ is extender algebra generic over $Q$,
		and therefore $t^Q(s)$ is a limit cardinal of $M_1$.
		Working in $P$, given $\eta_n^P$, let
		\[ \eta_{n+1}^P=\sup_{Q\in\mathscr{F}^P_{<\eta_n^P}}t^Q(s). \]
		Since $P\in\mathscr{F}^P_{\eta_0^P}$,
		we get $\eta_n^P\leq\eta_{n+1}^P$, and note
		that if $\eta_n^P=\eta_{n+1}^P$ then $\eta_n^P=\eta_m^P$ for all $m\in[n,\om)$.
		Also let $\eta_\om^P=\sup_{n<\om}\eta_n^P$.
		Note that $\eta_\om^{M_1}$ is definable in $M_1$ from the parameter 
		$s$ (using the process we have just gone through).
		Note that each $\eta_n^{M_1}$ is a limit cardinal of $M_1$
		(using the cofinality of extender algebra grounds just mentioned above).
		
		We claim $\eta_\om^{M_1}$ is dl-stable. For given
		$P$ with $M_1\dashrightarrow P$ and $P|\delta^P\in M_1|\eta_\om^{M_1}$,
		for each $n\leq\om$,
		we have $\eta_n^P=i_{M_1P}(\eta_n^{M_1})$, since
		$i_{M_1P}(s)=s$ and by elementarity.
		Suppose  $\eta_\om^P>\eta_\om^{M_1}$.
		Then we may assume $M_1$ is extender algebra
		generic over $P$, and therefore each $\eta_n^P$ is also
		a limit cardinal of $M_1$. 
		Let $n<\om$ be least such that $\eta_n^P>\eta_\om^{M_1}$.
		If $n=0$, i.e. $t^P(s)>\eta_\om^{M_1}$, we contradict
		the definition of $\eta_1^{M_1}$.
		So we get $\eta_n^P\leq\eta_\om^{M_1}<\eta_{n+1}^P$.
		But then there is $Q\in P|\eta_n^P$ 
		with $\delta^Q<\eta_n^P$ such that $t^Q(s)>\eta_\om^{M_1}$,
		but note that then there is $m<\om$
		such that $Q|\delta^Q\in M_1|\eta_m^{M_1}$,
		so then $\eta_{m+1}^{M_1}\geq t^Q(s)>\eta_\om^{M_1}$,
		a contradiction.
	\end{proof}
	
	So we may assume that $\Tt\in M_1[\Sigma_\eta]|\kappa$
	where $\eta$ is dl-stable. Let $\mathscr{F}=\mathscr{F}^{M_1}_{<\eta}$
	and $M_\infty=M_\infty^{\mathscr{F}}$ and $\Sigma_\infty=\Sigma_\infty^{\mathscr{F}}$.
	By \S\ref{sec:background} (or its proof adapted here) and since $\eta$
	is dl-stable, the iteration map $j:M_1\to M_\infty$
	extends to $j^+:M_1[\Sigma_\eta]\to M_\infty[\Sigma_\infty]$,
	and $j^+(\kappa)=j(\kappa)=\kappa$.
	So
	\[ j^+(\Tt)\in M_\infty[\Sigma_\infty]|\kappa\sub M_1|\kappa.\]
	Therefore $c=\Sigma_{M_1}(j^+(\Tt))\in M_1[\Sigma_{\zeta}]$
	for some $\zeta<\kappa$. But then working in $M_1[\Sigma_\zeta]$,
	we can use $c$ to compute $b=\Sigma_{M_1}(\Tt)$, as desired.
\end{proof}

\section{Killing $(\om,\om_1+1)$-iterability with $\sigma$-distributive 
	forcing}\label{sec:kill_it}

A corollary of  \cite[Theorem 7.3]{iter_for_stacks}
is that if $M$ is an $\om$-sound premouse
which projects to $\om$ and $M$ is $(\om,\om_1+1)$-iterable
then $M$ is still $(\om,\om_1+1)$-iterable in any ccc forcing extension.
\footnote{We are working in $\ZFC$ in this paper, though
	\cite{iter_for_stacks} was more general than this.}

What if we replace the ccc with other properties?
It turns out that $\sigma$-distributive forcing
need not preserve the $(\om,\om_1+1)$-iterability
of $M_1^\#$. The following result is joint with Ralf Schindler:

\begin{tm}\label{tm:killing_it}
Suppose  $M_1^\#$ exists and is $(\om,\om_1+1)$-iterable.
Then there is an inner model $W$ such that $M_1^\#\in W$
and $W\sats\ZFC+$``$M_1^\#$ is $(\om,\om_1+1)$-iterable'',
but there is a forcing $\PP\in W$ such that $W\sats$``$\PP$ is $\sigma$-distributive and $\PP$ forces that $M_1^\#$ is not $(\om,\om_1+1)$-iterable''.
\end{tm}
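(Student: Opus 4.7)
The plan is to find an inner model $W \sats \ZFC + $ ``$M_1^\#$ is $(\om,\om_1+1)$-iterable'' together with a $\sigma$-distributive forcing $\PP \in W$ which forces $M_1^\#$ to be non-iterable.  The strategic idea is that $\sigma$-distributive forcing preserves $\om_1$ and adds no countable sequences, so every normal iteration tree on $M_1^\#$ of length $<\om_1^W$, together with its branches, remains absolute between $W$ and $W[G]$; the possible failure of iterability must therefore come from a \emph{new} iteration tree $T \in W[G]$ of length exactly $\om_1^W$ whose cofinal wellfounded branch in $V$ escapes $W[G]$.

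First I would fix the inner model $W$.  A natural candidate is a $\HOD$-like inner model of the form $L[M_1^\#,A]$ for some set $A$ coding enough of the strategy $\Sigma_{M_1^\#}$ to act on trees in $L[M_1^\#]$, or alternatively a direct-limit model $M_\infty[\Sigma_\infty]$ of the type analyzed in \S\ref{sec:background}, arranged so that $M_1^\# \in W$.  In such a $W$ one then checks iterability of $M_1^\#$: for short trees via the definability of $Q$-structures inside $M_1^\#$, and for trees of length exactly $\om_1^W$ by using the structure of $W$ together with the Zipper-Lemma uniqueness of wellfounded branches through maximal trees to see that the required branches already lie in $W$.

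Next take $\PP \in W$ to be $\sigma$-closed (hence $\sigma$-distributive) and adding a new cofinal sequence into an ordinal of $W$; e.g.\ $\PP = \Coll(\om_1^W,\kappa)^W$ for some $W$-cardinal $\kappa > \om_1^W$.  In $W[G]$ I would build an iteration tree $T$ on $M_1^\#$ of length $\om_1^W$ that folds $G$ into a genericity-iteration pattern: at stage $\alpha < \om_1^W$ the extender $E^T_\alpha$ is chosen so that $G\rest\alpha$ is forced to decide more and more bits of a real generic for the extender algebra at the Woodin of the current main-branch model.  Since $\PP$ is $\sigma$-distributive, every proper initial segment $T\rest(\alpha+1)$ lies in $W$, so the strategy in $W$ handles every limit $\lambda < \om_1^W$, and $T$ is a well-defined tree in $W[G]$.

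The hard part will be to show that $T$ admits no cofinal wellfounded branch in $W[G]$.  Any such branch $b$ would yield an iteration map $i^T_b$ from which, via the extender-algebra forcing relation and the way $G$ was woven into the tree, one could read $G$ off from $i^T_b$ applied to objects of $M_1^\#$; so $G \in W[b]$.  By the Zipper Lemma $b$ is in fact the unique wellfounded cofinal branch, hence definable from $T$ in an absolute way, which would pin down $W[b]$ as a specific set-ground of $W[G]$ strictly between $W$ and $W[G]$.  A generalized-Kunen-type argument along the lines of \cite{gen_kunen_incon} (and in the spirit of the contradiction extracted in the proof of Claim \ref{clm:Mmm_sub_M_infty*}) should then rule out such an intermediate ground given the chosen $\PP$, producing the desired contradiction.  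This last step is the delicate one, and is where the joint work with Schindler delivers the key technical input.
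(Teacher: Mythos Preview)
Your proposal takes a quite different route from the paper, and unfortunately the route you sketch has a genuine gap at its core.

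The paper does \emph{not} use a standard $\sigma$-closed forcing like $\Coll(\om_1,\kappa)$. Instead, after reducing to $W=L[A]$ for a suitable $A\sub\om_1$, it defines $\PP$ to be a forcing whose \emph{conditions are themselves iteration trees} on $M_1$: countable-successor-length normal trees whose extenders are either $A$-meas-lim-bad or short linear steps at successor measurables, ordered by tree extension. The generic filter $G$ then directly produces a length-$\om_1$ tree $\Tt_G=\bigcup G$. Three claims drive the proof: (i) a density argument shows $\lh(\Tt_G)=\om_1$; (ii) a Skolem-hull argument (using that $L_{\bar\lambda}[\bar A]$ is pointwise definable) shows $\PP$ is $\sigma$-distributive; (iii) a reflection argument using the meas-lim discipline shows $\Tt_G$ is \emph{maximal}. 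Once $\Tt_G$ is maximal, the ``no branch'' step is immediate: any $\Tt_G$-cofinal $b\in L[A,G]$ would give $i^{\Tt_G}_b(\delta^{M_1})=\om_1$, and since $\delta^{M_1}$ is countable, $i^{\Tt_G}_b``\delta^{M_1}$ would singularize $\om_1$ in $L[A,G]$.

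Your approach instead fixes a $\sigma$-closed $\PP$ and tries to build $T$ afterwards from $G$, then argue indirectly via ``read $G$ off from $b$'' and a generalized-Kunen obstruction to intermediate grounds. This is where the gap lies. First, $\Coll(\om_1,\kappa)$ has plenty of intermediate extensions, so ``$W[b]$ is a ground strictly between $W$ and $W[G]$'' is not a contradiction, and the Kunen-style results you cite concern self-embeddings of grounds, not mere existence of intermediate grounds. Second, and more seriously, the paper explicitly remarks after the proof that its $\PP$ is \emph{not} $\sigma$-closed and leaves it as an \emph{open question} whether $\sigma$-closed forcing can destroy $(\om,\om_1+1)$-iterability of $M_1^\#$. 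So your chosen $\PP$ is precisely the case the paper does not know how to handle; the argument you outline for it is not one the authors have, and your sketch does not supply the missing idea.

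The conceptual point you are missing is that the forcing must be engineered so that the generic tree is forced to be \emph{maximal} at $\om_1$; once you have that, ``no branch'' is a one-line cofinality argument, with no need for Kunen-type machinery. Getting maximality and $\sigma$-distributivity simultaneously is the actual content, and it is achieved by making the trees themselves the conditions and controlling which extenders may be used.
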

\begin{proof}
We may assume  $V=L[A]$ for some $A\sub\om_1$.
For let $\Sigma=\Sigma_{M_1^\#}$.
Let $C$ code $\left<f_\alpha\right>_{\alpha<\om_1}$ with each
$f_\alpha:\om\to\alpha$
surjective. Let
\[ \Sigma'=\Sigma\inter L_{\om_1}[M_1^\#,\Sigma,C]\]
and $A=(M_1^\#,\Sigma',C)$.
So $\om_1^{L[A]}=\om_1$ and
$L[A]\sats\text{``}M_1^\#\text{ is }(\om,\om_1)\text{-iterable''}$.
But then
$L[A]\sats\text{``}M_1^\#\text{ is }(\om,\om_1+1)\text{-iterable''}$.
For if $\Tt$ on $M_1$ has length $\delta=\om_1$ then
$L[M(\Tt)]\sats\text{``}\delta\text{ is not Woodin''}$,
because otherwise  
$i^\Tt_b(\delta^{M_1})=\om_1$ where $b=\Sigma(\Tt)$,
but $i^\Tt_b(\delta^{M_1})$ has cofinality $\om$ in $V$.
But then using the Q-structure for $M(\Tt)$, we get the correct branch in $L[A]$.

For a non-dropping iterate $N$ of $M_1$ and $E\in\es^N$,
say $E$ is \emph{$A$-ml-bad} iff in $N$,
 $E$ induces a meas-lim extender algebra axiom $\varphi$ such that
$A\not\sats\varphi$ (in particular, $\nu_E$ is a limit of $N$-measurables).

Now work in $L[A]$.
We define a forcing $\PP$ which is $\sigma$-distributive and
kills the $(\om,\om_1+1)$-iterability of $M_1^\#$.
The conditions are iteration trees $\Tt$
such that:
\begin{enumerate}[label=--]
	\item $\Tt$ is on $M_1$, is normal, has countable successor length,
	$b^\Tt$ is non-dropping, and $\Tt$ is normally 
	extendible; that is,
	$\delta^{M^\Tt_\infty}>\sup_{\alpha+1<\lh(\Tt)}\lh(E^\Tt_\alpha)$.
	\item For every $\alpha+1<\lh(\Tt)$, $\es(M^\Tt_\alpha)$ has no $A$-ml-bad 
	extenders 
	indexed  $<\lh(E^\Tt_\alpha)$, and either
	\begin{enumerate}[label=(\roman*)]
		\item $E^\Tt_\alpha$ is $A$-ml-bad
	or \item $E^\Tt_\alpha$ is an $M^\Tt_\alpha$-total normal measure
	and $\crit(E^\Tt_\alpha)$ is not a limit of $M^\Tt_\alpha$-measurables.
	\end{enumerate}
\end{enumerate}
Set $\Tt\leq\Ss$ 
iff $\Tt$ extends $\Ss$.

Let $G$ be $L[A]$-generic for $\PP$. Let $\Tt_G=\bigcup G$.
Clearly $\Tt_G$ is a limit length normal tree on $M_1$.

\begin{clmthree}
	$\lh(\Tt_G)=\om_1$.
\end{clmthree}
\begin{proof}
	Fix $\Tt\in\PP$ and $\beta<\om_1$.
	Let $\Ss$ extend $\Tt$ via meas-lim extender algebra genericity iteration for 
	making $A$ generic,
	until reaching some $M^\Ss_\alpha$ with  a successor
	$M^\Ss_\alpha$-measurable
	$\kappa>\sup_{\beta<\alpha}\lh(E^\Ss_\beta)$
such that
	$M^\Ss_\alpha$ has no $A$-ml-bad extender with index ${<\kappa}$.
	Note that actually either $\alpha+1=\lh(\Tt)$ or $\alpha=\lh(\Tt)$.
	Set
	$\Ss=\Ss\rest(\alpha+1)\conc\Uu$,
	where $\Uu$ is the linear iteration of $M^\Ss_\alpha$
	of length $\beta+1$, at $\kappa$ and its images.
	Since $\kappa$ is not a limit of measurables,
	it easily follows that $\Ss\in\PP$ (the linear iteration does not leave any 
	$A$-ml-bad extenders behind),
	and we have $\Ss\leq\Tt$ and $\lh(\Ss)>\beta$, which suffices by density.
\end{proof}

Work again in $L[A]$.

\begin{clmthree}$\PP$ is $\sigma$-distributive.
\end{clmthree}
\begin{proof}
	Fix $\mathscr{D}=\left<D_n\right>_{n<\om}$
	with each $D_n\sub\PP$ dense.
	Let $\lambda\in\OR$ be large and
	\[ N=\cHull^{L_\lambda[A]}(\{\PP,\mathscr{D},A\}) \]
	and $\pi:N\to L_\lambda[A]$ be the uncollapse.
	So $\om_1^N=\crit(\pi)$. Let $\pi(\bar{A})=A$, etc. So 
	$\bar{A}=A\inter\om_1^N$ and $N=L_{\bar{\lambda}}[\bar{A}]$ for some 
	$\bar{\lambda}\in\OR$,
	and note that $N$ is pointwise definable from the parameter
	$(\bar{\PP},\bar{\mathscr{D}},\bar{A})$, and in particular countable.
	
	Let $g$ 
	be $N$-generic
	for $\bar{\PP}$. 
	Let $\Tt=\Tt_g$. By the preceding claim, $\Tt$ is a length 
	$\delta=\delta(\Tt)=\om_1^N$ tree on 
	$M_1$. Let $\eta$
	be such that either $\eta=\OR$
	or $Q=L_{\eta}[M(\Tt)]$ projects ${<\delta}$
	or is a Q-structure for $\delta$.
	Let $b=\Sigma(\Tt)$ (we have $b\in V=L[A]$). Note 
	$\Tt\conc b\in\PP$ iff
	$i^\Tt_b(\delta^{M_1})>\delta$
	iff $\eta<\OR$.
	Suppose $\eta=\OR$.
	Then $\delta$ is Woodin in $P=L[M(\Tt)]$
	and note that $\bar{A}$ is $P$-generic for $\BB_\delta^P$,
	so $\delta$ is regular in $P[\bar{A}]=L[M(\Tt),\bar{A}]$.
	But $N=L_{\bar{\lambda}}[\bar{A}]\in P[\bar{A}]$
	and $N$ is pointwise definable from
	parameters,
	so $\delta=\om_1^N$ is countable
	in $L[\bar{A}]\sub P[\bar{A}]$, a contradiction.
	So $\eta<\OR$ and $\Tt\conc 
	b\in\PP$.
	
	Since $g\inter D_n\neq\emptyset$,
	$\Tt\conc b$ extends some element of $D_n$, for each $n$, completing the 
	proof.
\end{proof}

Work in $L[A,G]$. Let $\Tt=\Tt_G$.
Let $\eta$ be least such that either $\eta=\OR$
or $Q=L_\eta[M(\Tt)]$ is a Q-structure for $M(\Tt)$.
\begin{clmthree}
	$\Tt$ is maximal. That is, $\eta=\OR$ and 
	$L[M(\Tt)]\sats$``$\delta(\Tt)$ is Woodin''.
\end{clmthree}
\begin{proof}
	Suppose not. Then by absoluteness there is a $\Tt$-cofinal branch $b$.
	Since $\om_1$ was preserved by $\PP$ (by 
	$\sigma$-distributivity),
	let
	$\pi:M\to L_\lambda[A,G]$
	be elementary with $M$ countable transitive, where $\lambda\in\OR$ is large
	and $A,\Tt,b\in\rg(\pi)$. Let $\kappa=\crit(\pi)$.
	By the usual argument, $i^\Tt_{\kappa\om_1}\rest\pow(\kappa)\sub\pi$,
	and clearly
	$\kappa=\delta(\Tt\rest\kappa)$.
	Let $\alpha+1=\successor^{\Tt}(\kappa,\om_1)$,
	so $\kappa=\crit(E^\Tt_\alpha)$.
	By the usual argument, $E^\Tt_\alpha$ was not chosen for genericity 
	iteration purposes,
	so by the construction of $\Tt$, $\kappa$ is a successor
	measurable of $M^\Tt_\alpha$.  By elementarity, it follows
	that $M(\Tt)$ has only boundedly many measurables.
	But since the $A$-ml-bad extenders $E$ have $\nu_E$
	a limit of measurables, this easily contradicts density.
\end{proof}

Now since $\Tt_G$ is maximal, there can be no $\Tt_G$-cofinal branch in $L[A,G]$
(as otherwise $i^{\Tt_G}_b$ would singularize $\om_1$).
Therefore $M_1$ and $M_1^\#$ are not $(\om,\om_1+1)$-iterable in $L[A,G]$, as 
desired.
\end{proof}

\begin{rem}The forcing $\PP$ above is not $\sigma$-closed. (For
by $(\om,\om_1+1)$-iterability in $L[A]$, $A$-ml-genericity iterations
terminate in countably many steps, and this easily yields descending $\om$-sequences
$\left<\Tt_n\right>_{n<\om}$ of $\PP$-conditions such that $\bigcup_{n<\om}\Tt_n$ is maximal, and hence with no extension in $\PP$.)
Does $\sigma$-closed forcing preserve the $(\om,\om_1+1)$-iterability
of $M_1^\#$?\end{rem}

\section{More iterates of $M_1$ around $L[x]$}

\begin{dfn}
	Let $x\in\RR$ with $x\geq_T M_1^\#$ and $\kappa\in\OR$. Say that $\kappa$ is \emph{Woodin-amenable} (for $L[x]$) iff $\kappa$
	is regular in $L[x]$ and there is  a maximal tree $\Tt$ on $M_1$
	with $\Tt\in L[x]$ and $\delta(\Tt)=\kappa$.
Say that $\kappa$ is \emph{distributively virtually Woodin-amenable} (for $L[x]$)
iff in $L[x]$, there is a ${<\kappa}$-distributive forcing $\PP$ which
forces the existence of such a tree.
\end{dfn}

\begin{rem}
	It is easy to see that we can add the requirement
	that $x$ be extender algebra generic over $L[M(\Tt)]$
	in both definitions above, without changing them
	(the $x$-genericity iteration performed with $L[M(\Tt)]$
	does not move the Woodin).

\end{rem}

\begin{tm}
	Let $x\in\RR$ with $x\geq_T M_1^\#$. Work in $L[x]$. Then:
	\begin{enumerate}
		\item\label{item:Woodin-amenable_through_theta_0} Every inaccessible  $\kappa\leq\theta_0$ is Woodin-amenable,
		where $\theta_0$ is the least Mahlo cardinal.
		\item\label{item:weak_compact_not_Woodin-amenable} No weakly compact cardinal is Woodin-amenable;
		in particular, $\kappa_0^{L[x]}$ is not, and hence measure one many
		$\kappa<\kappa_0^{L[x]}$ are not (with respect to the $x^\#$ measure).
		\item\label{item:stat-many_Woodin-amenable} There are stationarily many $\kappa<\kappa_0^{L[x]}$ which
		are Woodin-amenable.
	\end{enumerate}
	\end{tm}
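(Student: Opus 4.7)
For part~\ref{item:Woodin-amenable_through_theta_0}, my plan is to rerun the construction in Claim~\ref{clm:iterate_height_delta} essentially verbatim, observing that that proof already builds, inside $L[x]$, a normal tree $\Tt$ on $M_1$ of length $\kappa$ with $\delta(\Tt)=\kappa$ such that $L[M(\Tt)]\sats$ ``$\delta(\Tt)$ is Woodin'', which is literally the definition of Woodin-amenability. Concretely, for $\kappa<\theta_0$ one weaves a meas-lim extender algebra genericity iteration with a club $C\sub\kappa$ of $L[x]$-singular cardinals (available since $\kappa$ is not Mahlo in $L[x]$), folding in short linear iterations on measurables off $C$; for $\kappa=\theta_0$ one instead uses the variant of Claim~\ref{clm:iterate_height_delta} that arranges all measurables of intermediate models to be $L[x]$-regular, exploiting the Mahloness of $\theta_0$ via a reflection argument.

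For part~\ref{item:weak_compact_not_Woodin-amenable}, suppose towards contradiction that $\kappa$ is weakly compact in $L[x]$ and Woodin-amenable, witnessed by some $\Tt\in L[x]$. My plan is to invoke the embedding characterization of weak compactness: choose a transitive $\bar M\in L[x]$ of size $\kappa$ with $\Tt,M_1|\delta^{M_1},x\in\bar M$ and ${}^{<\kappa}\bar M\sub\bar M$, and obtain an elementary $j:\bar M\to\bar N$ in $L[x]$ with $\crit(j)=\kappa$. Then $j(\Tt)$ is a normal tree on $M_1$ with $j(\Tt)\rest\kappa=\Tt$, and $b:=[0,\kappa]_{j(\Tt)}\in\bar N\sub L[x]$ is a cofinal wellfounded branch of $\Tt$; by maximality of $\Tt$, $b$ is non-dropping and $M^\Tt_b=M^{j(\Tt)}_\kappa$ is a proper-class iterate of $M_1$ sitting inside $L[x]$ with Woodin cardinal $\kappa$. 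The hard step, which I expect to be the main obstacle, is turning this into an outright contradiction, since a priori nothing forbids such a pseudo-iterate from appearing in $L[x]$. My intended route is to exploit $\Pi^1_1$-indescribability of $\kappa$ applied to the $\Pi^1_1$-over-$V_\kappa^{L[x]}$ statement (with parameter $M(\Tt)$) that $L[M(\Tt)]\sats\delta(\Tt)$ is Woodin, reflecting it to a stationary set of cut points $\bar\kappa<\kappa$ of $\Tt$; at each such $\bar\kappa$, the branch $[0,\bar\alpha]_\Tt$ must coincide with $\Sigma_{M_1}(\Tt\rest\bar\alpha)$ and thus lives in $L[x]$, producing non-dropping iterates of $M_1$ with Woodin $\bar\kappa$ on a stationary set. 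I then plan to combine this abundance of iterates with the direct-limit machinery of Theorem~\ref{tm:delta-cc_mantle} at each $\bar\kappa$ and with the generalized Kunen inconsistency (as used in \S\ref{sec:background} to rule out proper-class elementary embeddings being classes of $L[x,G]$) to extract the desired contradiction.

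For part~\ref{item:stat-many_Woodin-amenable}, my plan is to find a stationary $S\sub\kappa_0^{L[x]}$ of cardinals $\kappa$ that are inaccessible but not weakly compact in $L[x]$, and then run the argument of part~\ref{item:Woodin-amenable_through_theta_0} at each $\kappa\in S$: if $\kappa$ is not Mahlo in $L[x]$ one uses a club of $L[x]$-singulars below $\kappa$, while if $\kappa$ happens to be Mahlo but not weakly compact one uses the $\theta_0$-style variant of Claim~\ref{clm:iterate_height_delta}. For the stationarity of $S$, the point is that $\kappa_0^{L[x]}$, being a Silver indiscernible of $L[x]$, enjoys very strong large cardinal reflection in $L[x]$ (it is in particular weakly compact there, so the set of $L[x]$-inaccessibles below it is stationary), whereas the $L[x]$-weakly-compact cardinals below $\kappa_0^{L[x]}$ are only what weak compactness of $\kappa_0^{L[x]}$ forces them to be; a standard Mahlo-style reflection count then yields that inaccessibles which are not weakly compact in $L[x]$ form a stationary subset of $\kappa_0^{L[x]}$, completing the proof.
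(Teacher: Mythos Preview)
Your part~\ref{item:Woodin-amenable_through_theta_0} is fine and matches the paper.

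For part~\ref{item:weak_compact_not_Woodin-amenable}, you correctly reach the weak compactness embedding and even the branch $b\in L[x]$, but you then declare the contradiction ``the hard step'' and propose an elaborate detour through $\Pi^1_1$-reflection, stationary sets of cutpoints, direct-limit systems, and the generalized Kunen inconsistency. None of this is needed, and it is not clear your route actually closes. The paper's contradiction is immediate once you have $\pi(\Tt)$ (putting $\Tt$, not just $M(\Tt)$, into $\bar M$): $\pi(\Tt)$ is a normal tree on $M_1$ of length $\pi(\kappa)>\kappa$ with $\pi(\Tt)\rest\kappa=\Tt$, so at stage $\kappa$ it must select an extender $E^{\pi(\Tt)}_\kappa$ from $M^{\pi(\Tt)}_\kappa$ with $\lh(E^{\pi(\Tt)}_\kappa)>\delta(\Tt)=\kappa$. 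But $\Tt$ is maximal, so its unique cofinal wellfounded branch has model $L[M(\Tt)]$ with Woodin exactly $\kappa$ and hence \emph{no} extender indexed above $\kappa$; equivalently, the very existence of a $1$-small premouse (here $M^{\pi(\Tt)}_\kappa$, or just $\pi(M(\Tt))$) properly end-extending $M(\Tt)$ produces a Q-structure and contradicts maximality. That is the whole argument.

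For part~\ref{item:stat-many_Woodin-amenable} your approach has a genuine gap and differs from the paper's. You want to exhibit a stationary-in-$\kappa_0^{L[x]}$ set $S$ of inaccessibles that are not weakly compact and invoke part~\ref{item:Woodin-amenable_through_theta_0} at each $\kappa\in S$. But part~\ref{item:Woodin-amenable_through_theta_0} only covers $\kappa\leq\theta_0$, and since $\theta_0<\kappa_0^{L[x]}$, this yields only a bounded set. Your fix for Mahlo-but-not-weakly-compact $\kappa>\theta_0$ (``use the $\theta_0$-style variant'') does not work: that variant relies on $\theta_0$ being the \emph{least} Mahlo, so that if the iteration terminates early at some $\delta<\theta_0$ one derives that $\delta$ is Mahlo, a contradiction; for $\kappa>\theta_0$ early termination at a Mahlo $\delta<\kappa$ is no contradiction at all. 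The paper instead argues directly: given a club $C\sub\kappa_0^{L[x]}$, run a single ml-genericity iteration as in Claim~\ref{clm:iterate_height_delta}, folding in short linear iterations so that every measurable of the resulting $M(\Tt)$ lies in $C$. By part~\ref{item:weak_compact_not_Woodin-amenable} the tree cannot reach $\kappa_0^{L[x]}$, so it becomes maximal at some $\delta(\Tt)<\kappa_0^{L[x]}$; since $\delta(\Tt)$ is a limit of measurables of $M(\Tt)$, all of which lie in $C$, we get $\delta(\Tt)\in C$.
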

\begin{proof}
	Part \ref{item:Woodin-amenable_through_theta_0} was proven in
	 Claim \ref{clm:iterate_height_delta} of the proof of Theorem \ref{tm:delta-cc_mantle}.
For part \ref{item:weak_compact_not_Woodin-amenable}, suppose $\kappa$
	is weakly compact and $\Tt$ a counterexample.
	Consider an embedding $\pi:M\to U$,
	with $M,U$ transitive and $\crit(\pi)=\kappa$ and $M(\Tt)\in M$,
	for a contradiction.
	
	Part \ref{item:stat-many_Woodin-amenable}: Let $C\sub\kappa_0^{L[x]}$
	be club. Then working much as in the proof of Claim \ref{clm:iterate_height_delta}
	of the proof of Theorem \ref{tm:delta-cc_mantle}, form an ml-genericity iteration
	which sends every measurable of $M(\Tt)$ to some element of $C$. 
	This has to terminate with a maximal tree $\Tt$ with $\delta(\Tt)<\kappa_0^{L[x]}$, by the previous part, and hence with $\delta(\Tt)\in C$.
	
	\end{proof}
\begin{tm}Let $x\geq_T M_1^\#$. Work in $L[x]$. Then every
	inaccessible is distributively virtually Woodin-amenable.
\end{tm}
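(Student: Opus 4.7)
The approach is to adapt the $\sigma$-distributive forcing of Theorem~\ref{tm:killing_it} from $\om_1$ to $\kappa$. Fix $\kappa$ inaccessible in $L[x]$ and work in $L[x]$; take $A=x$. Let $\PP$ consist of normal iteration trees $\Tt$ on $M_1$ of successor length $<\kappa$, with all $\lh(E^\Tt_\alpha)<\kappa$, $b^\Tt$ non-dropping, $\Tt$ normally extendible, each $E^\Tt_\alpha$ either $A$-ml-bad or a total normal measure on $M^\Tt_\alpha$ with critical point not a limit of $M^\Tt_\alpha$-measurables, and no $A$-ml-bad extender in $\es(M^\Tt_\alpha)$ indexed below $\lh(E^\Tt_\alpha)$. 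Order by end-extension. I would verify the analogs of the three claims of Theorem~\ref{tm:killing_it}: that the length of $\Tt_G=\bigcup G$ is $\kappa$, that $\Tt_G$ is maximal, and that $\PP$ is ${<\kappa}$-distributive in $L[x]$.

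Density of length and maximality of $\Tt_G$ under fully generic $G$ transfer essentially unchanged. For length, a countable meas-lim genericity iteration (terminating countably as $x$ is a real) followed by a linear iteration of length $\beta+1$ at a successor measurable extends any condition past any $\beta<\kappa$, keeping total length $<\kappa$. For maximality, use that ${<\kappa}$-distributivity preserves $\kappa$'s inaccessibility in $L[x,G]$, take an elementary $\pi:M\to L_\mu[x,G]$ of size $<\kappa$ with $\crit(\pi)<\kappa$ and $\Tt_G,b,x\in\rg(\pi)$, and run the same critical point argument forcing $M(\Tt_G)$ to have only boundedly many measurables, contradicting density of $A$-ml-bad extenders.

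The principal obstacle is ${<\kappa}$-distributivity in $L[x]$. Given $\lambda<\kappa$ and dense $\langle D_\alpha\rangle_{\alpha<\lambda}\in L[x]$, choose an $L[x]$-cardinal $\bar\kappa$ with $\lambda<\bar\kappa<\kappa$ and construct, by a continuous chain argument, an elementary $N\preccurlyeq L_\mu[x]$ of cardinality $\bar\kappa$ containing $\bar\kappa\cup\{\PP,\langle D_\alpha\rangle,x,\kappa\}$, such that the uncollapse $\pi:\bar N\to N$ satisfies $\crit(\pi)=\hat\kappa:=\pi^{-1}(\kappa)$; then $\bar N=L_{\bar\mu}[x]$ for some $\bar\mu$, $|\bar N|^{L[x]}=\bar\kappa$, and by the extender-index bound on $\PP$ combined with $\hat\kappa$-inaccessibility in $\bar N$, every condition of $\bar\PP:=\pi^{-1}(\PP)$ has $\bar N$-rank $<\hat\kappa=\crit(\pi)$ and so is fixed by $\pi$; in particular $\pi^{-1}(D_\alpha)\sub D_\alpha$. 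Now build in $L[x]$ an $\bar N$-generic $g\sub\bar\PP$ by enumerating the $\leq\bar\kappa$-many dense subsets of $\bar\PP$ in $\bar N$ (placing a length-forcing dense set first, so that every limit-stage union tree $\Tt^*$ has length $\bar\gamma>\bar\kappa$) and descending through the enumeration, appending at each limit stage the branch of $\Tt^*$ computed from its Q-structure in $L[x]$. The construction does not stall: if such a $\Tt^*$ were maximal, $\bar\gamma$ would be Woodin in $L[M(\Tt^*)]$, so by the $\bar\gamma$-cc-ness of the extender algebra and the ml-genericity built into the conditions, $\bar\gamma$ would be regular in $L[M(\Tt^*),x]=L[x]$ (using $M(\Tt^*)\in L[x]$); but $|\bar\gamma|^{L[x]}\leq\bar\kappa<\bar\gamma$ since $\bar\gamma\in\bar N$, a contradiction. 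The same cardinality argument at $\Tt_g=\bigcup g$ of length $\hat\kappa$ yields non-maximality, so $b=\Sigma_{M_1}(\Tt_g)\in L[x]$ and $\Tt_g\conc b\in\PP$ extends each $\bar T_\alpha\in g\cap\pi^{-1}(D_\alpha)\sub D_\alpha$, as required.
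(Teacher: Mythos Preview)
Your proposal is correct and uses exactly the same forcing $\PP$ as the paper. The length-density and maximality arguments are essentially identical to the paper's. The only real difference is in the proof of ${<\kappa}$-distributivity, where you take a genuinely more elaborate route than the paper does.

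You adapt the elementary-substructure/generic-filter machinery from Theorem~\ref{tm:killing_it} wholesale: build a size-$\bar\kappa$ hull $\bar N=L_{\bar\mu}[x]$ with $\crit(\pi)=\hat\kappa$, construct a $\bar N$-generic $g$ inside $L[x]$, and at each limit stage argue that the union tree has length in the interval $(\bar\kappa,\hat\kappa]$, hence of $L[x]$-cardinality $\leq\bar\kappa$, hence not regular in $L[x]$, hence non-maximal. This is correct (your cardinality bound $|\bar\gamma|^{L[x]}\leq|\hat\kappa|^{L[x]}\leq|\bar N|^{L[x]}=\bar\kappa$ is what makes it work), but it is more than is needed. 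The paper instead observes that the substructure argument from Theorem~\ref{tm:killing_it} can be replaced here by a direct cofinality bound: given $\gamma<\kappa$ dense sets and a starting condition, first extend to some $\Tt_0$ with $\delta(\Tt_0)>\gamma$, then descend through the $D_\alpha$'s in order, taking Q-structure branches at limits. At any limit stage $\eta\leq\gamma$ the union $\Uu$ has $\cof^{L[x]}(\delta(\Uu))=\cof^{L[x]}(\eta)\leq\gamma<\delta(\Tt_0)<\delta(\Uu)$, so $\delta(\Uu)$ is singular in $L[x]$; but maximality plus ml-genericity of $x$ would make it regular in $L[x]$. This avoids all the hull machinery. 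What your approach buys is a uniform template (it would also reprove $\sigma$-distributivity in Theorem~\ref{tm:killing_it} verbatim); what the paper's approach buys is a two-line argument exploiting the specific situation that the number of dense sets is already below the length of the starting condition.
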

\begin{proof}
	We define $\PP$ just like in the proof of Theorem \ref{tm:killing_it},
	except that we allow arbitrary trees of length ${<\kappa}$,
	and the ml-genericity iteration is with respect to $x$.
	
	Let's observe that $\PP$ is ${<\kappa}$-distributive.
	Let $\left<D_\alpha\right>_{\alpha<\gamma}$ be a sequence
	of dense sets, where $\gamma<\kappa$.
	Let $\Tt\in\PP$ be given. Now extend $\Tt$ with a descending
	sequence $\left<\Tt_\alpha\right>_{\alpha\leq\gamma}\sub\PP$ as follows.
	Let $\Tt_0\leq\Tt$ and $\delta(\Tt_0)>\gamma$.
Given $\Tt_\alpha$, let $\Tt_{\alpha+1}$ be  any $\Uu\in\PP$
	with $\Uu\leq\Tt_\alpha$ and $\Uu\neq\Tt_\alpha$ and $\Uu\in D_\alpha$.
Extend arbitrarily at limit stages. To see this works,
	let $\eta\leq\gamma$ be a limit ordinal and $\Uu=\bigcup_{\alpha<\eta}$;
	we just need to see that $\Uu$ is non-maximal, so suppose it is maximal.
	Then $x$ is generic over $L[M(\Uu)]$, and therefore $\delta(\Uu)$
	is regular in $L[M(\Uu)][x]=L[x]$. But 
	\[ \cof^{L[x]}(\delta(\Uu))=\cof^{L[x]}(\eta)\leq\gamma<\delta(\Tt_0)<\delta(\Uu),\]
	 so $\delta(\Uu)$ is singular in $L[x]$, contradiction.
	 
	 Since $\PP$ is ${<\kappa}$-distributive, $\kappa$ remains
	 inaccessible in $L[x,G]$, where $G$ is $(L[x],\PP)$-generic.
	 But then we can argue like in the proof of Theorem \ref{tm:killing_it}
	 to see that $\Tt_G$ is maximal.
	\end{proof}

\section{$\HOD^{L[x]}$ through the least strong of $M_\infty$}\label{sec:strong_cardinal}

It seems the theorem below is likely a consequence
of the proof of Woodin's \cite[Theorem 8.22]{lcfd},
but we give below a direct proof:

\begin{tm}
	Suppose $M_1^\#$ exists and is $(\om,\om_1+1)$-iterable.
	Assume constructible Turing determinacy.
	Then there is a cone of reals $x$ such that,
	letting $\mathscr{F}=\mathscr{F}_{\leq\omega_1^{L[x]}}$
	and $\kappa$ be the least ${<\delta_\infty}$-strong cardinal of $M_\infty=M_\infty^{\mathscr{F}}$, then
	 \[ V_{\kappa+1}^{\HOD^{L[x]}}=V_{\kappa+1}^{M_\infty}.\]
	\end{tm}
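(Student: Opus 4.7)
The easy inclusion $V_{\kappa+1}^{M_\infty}\sub V_{\kappa+1}^{\HOD^{L[x]}}$ is immediate from \S\ref{sec:background}: the direct limit system $\widetilde{\mathscr{D}}$ attached to $\mathscr{F}$, together with its structures $H^P_s$ and maps $\widetilde{i}_{Ps,Qt}$, is uniformly definable over $L[x]$ from the ordinal parameter $\omega_1^{L[x]}$, hence so is $M_\infty=M_\infty^{\widetilde{\mathscr{D}}}$. Thus $M_\infty\sub\HOD^{L[x]}$ as a class, and by transitivity of both models $V_\alpha^{M_\infty}\sub V_\alpha^{\HOD^{L[x]}}$ for every ordinal $\alpha$; in particular at $\alpha=\kappa+1$.

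For the converse, by induction on rank (using the easy direction at lower levels, together with standard coding), it suffices to show $\pow(\kappa)\inter\HOD^{L[x]}\sub M_\infty$. Fix $A\sub\kappa$ in $\HOD^{L[x]}$, along with a formula $\varphi$ and ordinal $\eta$ such that $A=\{\alpha<\kappa:L[x]\sats\varphi(\alpha,\eta)\}$. The plan is to capture $A$ via the direct limit: for each $P\in\widehat{\mathscr{F}}$ which is sufficiently $\eta$-stable, write $\kappa_P$ for the least ${<\delta^P}$-strong cardinal of $P$, so that $i_{P\infty}(\kappa_P)=\kappa$ by elementarity. Using that $L[x]=P[x]$ via the $\BB^P$-generic $x$ (see the proof of Theorem~\ref{tm:delta-cc_mantle}), I would define an approximation $\bar A_P\sub\kappa_P$ in $P$ by reading off from the $\BB^P$-forcing relation the appropriate truth values of ``$L[\dot x]\sats\varphi(\cdot,\eta)$''. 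By elementarity of the iteration maps on these parameters the family $\langle\bar A_P\rangle_P$ should be coherent, $i_{PQ}(\bar A_P)=\bar A_Q$ for $P\dashrightarrow Q$ both $\eta$-stable, and so induce a subset $A^*\sub\kappa$ in $M_\infty$ via the direct limit; the plan then concludes by verifying $A^*=A$.

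The main obstacle is the homogeneity content hiding in both the definition of $\bar A_P$ and the verification that $A^*=A$. The extender algebra $\BB^P$ is not outright homogeneous for statements about $L[\dot x]$ (different $\BB^P$-generic reals produce different $L[\dot x]$'s with different truths about parameters from $P$), so the naive candidate
\[ \bar A_P=\{\bar\alpha<\kappa_P : P\sats (\ \forces_{\BB^P} L[\dot x]\sats\varphi(\check{\bar\alpha},\check\eta))\} \]
need not pick out $A\inter\kappa_P$. I would try to resolve this by exploiting that $A$ is OD from $\eta$ in $L[x]$ to force a semi-homogeneity at the relevant parameters, and then propagating coherence through $i_{PQ}$ via the standard direct limit calculations of \S\ref{sec:background}. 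An alternative route I would also pursue uses Theorem~\ref{tm:delta-cc_mantle} to write $L[x]=M_\infty[\Sigma_\infty][G_x]$ for $G_x$ generic over $M_\infty[\Sigma_\infty]$ for Schindler's $\delta_\infty$-cc forcing $\CC\sub\delta_\infty$, and tries to show a weak-homogeneity property of $\CC$ forcing OD subsets of $\kappa<\delta_\infty$ in $L[x]$ to descend to $M_\infty[\Sigma_\infty]$; then since $V_{\delta_\infty}^{M_\infty[\Sigma_\infty]}=V_{\delta_\infty}^{M_\infty}$, this places $A$ in $M_\infty$. Either route would be an adaptation of Woodin's HOD analysis as developed in \cite{lcfd} and \cite{HOD_as_core_model}, and the technical heart is isolating the correct homogeneity/absoluteness lemma at the least strong cardinal of $M_\infty$.
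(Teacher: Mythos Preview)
Your overall strategy is right, and you have correctly located the obstacle: some homogeneity/absoluteness statement is needed to make the pullback $\bar A_P$ well-defined and coherent. But you have not found the actual lemma, and neither of your two proposed routes is the one the paper takes. The missing ingredient is Steel's observation that, on a cone of reals $x$ (this is where constructible Turing determinacy is used), Cohen forcing over $L[x]$ does not change the theory of the ordinals. One gets this by arranging $L[x]=N[g]$ for some iterate $N$ of $M_1$ and $g$ which is $(N,\Coll(\om,\delta^N))$-generic; then any Cohen extension $L[x,h]$ is again of the form $N[g']$ for $g'$ generic for $\Coll(\om,\delta^N)\times\Coll(\om,\delta^N)\cong\Coll(\om,\delta^N)$, and homogeneity of the collapse gives the result.

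With this in hand, the paper works not with $\BB^P$ but with $\Coll(\om,\delta^P)$, and the correct pullback is
\[ X^P=\Big\{\alpha<\kappa^P\Bigm|P\sats\Coll(\om,\delta^P)\forces\varphi(\pi^P(\alpha))\Big\}, \]
where $\pi^P:P\to M_\infty^{\mathscr{E}^P}$ is the direct limit map for the system $\mathscr{E}^P$ of iterates $Q$ of $P$ with $\delta^Q=\om_1^{P[g]}$; note $\pi^P\rest\kappa^P\in P$, so $X^P\in P$. (Your naive candidate used $\check{\bar\alpha}$ rather than its image under the iteration map, which cannot work even formally.) To verify $i_{PM_\infty}(X^P)=X$, one exploits that $\kappa^P$, being the \emph{least} strong, is a limit of cutpoints of $P$: this lets one replace $P$ by a countable-in-$L[x]$ iterate $\bar P$ agreeing with $P$ below $\kappa^P$, then form a Neeman genericity iteration of $\bar P$ using only critical points above the given $\alpha$, producing $Q$ with $x\in Q[g]$ for collapse-generic $g$. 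The quotient realizing $Q[g]$ over $L[x]$ is Cohen (or trivial), so the Cohen-absoluteness lemma gives $L[x]$ and $Q[g]$ the same ordinal theory; since $\mathscr{F}$ is dense in $\mathscr{E}^Q$, one gets $\pi^Q(\alpha)=i_{PM_\infty}(\alpha)$ and the desired equivalence follows. Your Schindler-forcing route does not obviously yield this, and in any case the cutpoint trick at the least strong is essential and absent from your sketch.
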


\begin{proof}
	\begin{clmfive}[Steel]\label{clm:OD_theory_Cohen_absolute}
		There is a cone of reals $x$ such that
		$L[x]\sats$``Cohen forcing does not change the theory of the ordinals'',
		and hence, ``Cohen forcing does not change $\HOD$''.\footnote{
		    Steel's orginal argument was sketched in \cite[Footnote 2, p.~603]{a_long_comparison}.
			The proof we give here was also sketched there, but
			we give a clearer proof here.}
		\end{clmfive}
	\begin{proof}
		Let $y\in\RR$. Let $N$ be an iterate of $M_1$ with $y\in N[g]$
		for some $g$ which is $(N,\PP)$-generic, where $\PP=\Coll(\om,\delta^N))$-generic.
		Let $x\in\RR$ with $L[x]=N[g]$. Let $h$ be $L[x]$-generic
		for Cohen forcing. Then there is $h'$ such that $g\cross h'$
		is $\PP\cross\PP$-generic
		over $N$ and $N[g,h']=L[h]$. But $\PP\cross\PP$ is equivalent 
		with $\PP$ and is homogeneous, so the ordinal theory of $L[x]$ is
		the same as that of $L[x,h]$, which by constructive Turing determinacy
		suffices.
		\end{proof}
	
Now fix $x$ in the cone of the claim and with $M_1^\#\in L[x]$.
We claim that $x$ works. For $M_\infty\sub\HOD^{L[x]}$,
so we just need to see that $V_{\kappa+1}^{\HOD^{L[x]}}\sub M_\infty$.
For this, it easily suffices to see that $\pow(\kappa)^{\HOD^{L[x]}}\sub M_\infty$. Suppose otherwise and let
 $X\sub\kappa$ and $\varphi$ be  a formula and $\eta\in\OR$ with
\[ \alpha\in X\iff L[x]\sats\varphi(\eta,\alpha), \]
but $X\notin M_\infty$. Since $M_\infty$ is $\OD$, by minimizing $\eta$,
we may assume $\eta=0$, so we drop that variable.

 Let $P\in\mathscr{F}$. 
Let $\kappa^P$ be the least ${<\delta^P}$-strong of $P$.
Let $\pi^P$ denote the map $P\to M_\infty^{\mathscr{E}^P}$,
where $\mathscr{E}^P$ denotes the system of iterates $Q$ of $P$
with $Q|\delta^Q\in P[g]$ and $\delta^Q=\om_1^{P[g]}$
for $g$ being $(P,\Coll(\om,\delta^P))$-generic.
Define
\[ X^P=\Big\{\alpha<\kappa^P\Bigm|P\sats\Coll(\om,\delta^P)\forces\varphi(\pi^P(\alpha))\Big\};\]
this is computed in $P$ (so $X^P\in P$),
as we only need $\pi^P\rest\kappa^P$, and $\pi^P\rest\kappa^P\in P$.
The following claim completes the proof:

\begin{clmfive}
	$i_{PM_\infty}(X^P)=X$.\end{clmfive}
\begin{proof}
	Let $\alpha<\kappa^P$. We want to see that $\alpha\in X^P$
	iff $i_{PM_\infty}(\alpha)\in X$. Now a first difficulty is that we might
	have $\delta^P=\om_1^{L[x]}$. But because $\alpha<\kappa^P$,
	we can get around this. Let $\Tt=\Tt_{M_1P}$, so $\Tt\in L[x]$ (and recall
	$M_1^\#\in L[x]$). Let $\gamma\in b^\Tt$ be such that $\kappa^P<\crit(i^\Tt_{\gamma\infty})$. Let $\bar{P}=M^\Tt_\gamma$.
	So $\kappa^{\bar{P}}=\kappa^P$ and $\bar{P}|\kappa^P=P|\kappa^P$,
	and $(\delta^{+\bar{P}})<\om_1^{L[x]}$. But  $i_{\bar{P}M_\infty}(\alpha)=i_{PM_\infty}(\alpha)$ because
	$\kappa^P$ is a limit of cutpoints of $P$ (here we use crucially
	that $\kappa^P$ is the least strong). Since $\delta^{+\bar{P}}$ is countable
	in $L[x]$, working in $L[x]$, we can form a Neeman genericity iteration,
	using only critical points $>\alpha$,
	producing an iterate $Q$ of $\bar{P}$. Working in $V$,
	let $g$ be $(Q,\Coll(\om,\delta^Q))$-generic with $x\in Q[g]$.
	
	Let $g_0,g_1$ be such that $g_0$ is $Q$-generic
	and $Q[g_0]=L[x]$ and $g_1$ is $Q[g_0]$-generic
	and $Q[g_0][g_1]=Q[g]=L[x][g_1]$. Then $L[x][g_1]$ is a Cohen
	forcing extension of $L[x]$ (or $g_1=\emptyset$). For letting $\BB\in Q$
	be such that $Q\sats$``$\BB$ is the regular open algebra
	associated to $\Coll(\om,\delta^Q)$'',
	then $\Coll(\om,\delta^Q)$ is dense in $\BB$,
	and hence the set of equivalence classes
	it induces is dense in $\BB/g_0$ (and $g_1$ is $(L[x],\BB/g_0)$-generic).
	But in $L[x]$, $\Coll(\om,\delta^Q)$ is countable,
	so in $L[x]$, $\BB/g_0$ has a countable dense subset,
	so is forcing equivalent to Cohen forcing there (or the trivial poset).
	
	So $Q[g]=L[x,h]$ for some Cohen generic $h$ over $L[x]$.
	Therefore by Claim \ref{clm:OD_theory_Cohen_absolute},
	the ordinal theory of $Q[g]$ is the same as that of $L[x]$.
	Note also that $\mathscr{F}$ is dense in $\mathscr{E}^Q$,
	and therefore \[ i_{PM_\infty}(\alpha)=i_{\bar{P}M_\infty}(\alpha)=i_{QM_\infty}(\alpha)=\pi^Q(\alpha),\]
	so $i_{PM_\infty}(\alpha)\in X$ iff $\pi^Q(\alpha)\in X$ iff $\alpha\in X^Q$
		iff $\alpha\in X^{\bar{P}}$ iff $\alpha\in X^P$, as desired.
	\end{proof}

 This completes the proof of the theorem.
	\end{proof}

Note that since $\kappa$ is measurable in $M_\infty\sub\HOD^{L[x]}$,
it follows that $\kappa$ is measurable in $\HOD^{L[x]}$.
Since $\kappa$ is a limit of cutpoints of $M_\infty$,
we have $\kappa\leq\kappa'$, where $\kappa'$
is the least ${<\delta}$-strong cardinal of $\HOD^{L[x]}$,
where $\delta=\om_2^{L[x]}$ (recall $\delta$ is Woodin in $\HOD^{L[x]}$).
And it follows that \emph{if} $\kappa=\kappa'$ then $V_\delta^{\HOD^{L[x]}}$
is the universe of a premouse.

\section{Comparison of countable iterates of $M_1$ in $L[x]$}

Let $x\in\RR$ with $M_1^\#\in L[x]$.
Let $P,Q$ be such that $M_1\dashrightarrow P,Q$ and $P,Q\in L[x]$
and $\delta^P,\delta^Q<\om_1^{L[x]}$.
Woodin asked whether
the pseudo-comparison of $P,Q$ always terminates
with length ${<\om_1^{L[x]}}$ (in connection
with the question of the nature of $\HOD^{L[x]}$).
We do not know the answer to this.
However, consider  the analogous
question, but where we have ``countably-in-$L[x]$ many''
iterates instead of ``two-many''. The answer here is ``no''.
In fact, there can be a sequence $\left<P_n\right>_{n<\om}\in L[x]$
such that $M_1\dashrightarrow P_n\dashrightarrow P_{n+1}$ 
and $\delta^{P_n}<\om_1^{L[x]}$
for each $n<\om$,
and letting $P_\om$ be the direct 
limit of the $P_n$'s (under the iteration maps),
then $P_\om\in L[x]$ and $\delta^{P_\om}=\om_1^{L[x]}$.
(And the result $P$ of their simultaneous comparison is just $P=P_\om$, by full normalization;
or just using commutativity, note that $P_\om$ embeds into $P$.)
To see this, just let $y\in\RR$
with $M_1^\#\in L[y]$,
let $G$ be $(L[y],\Coll(\om,\kappa))$-generic
where $\kappa$ is a limit of limit cardinals of $L[y]$,
let $x\in\RR$ with $L[x]=L[y,G]$, so $\om_1^{L[x]}=(\kappa^+)^{L[y]}$.
Working in $L[x]$, let $\left<\gamma_n\right>_{n<\om}$
be a strictly increasing sequence of limit cardinals of $L[y]$
with $\sup_{n<\om}\gamma_n=\kappa$,
and let $P_n$ be the direct limit of all 
$Q\in L_{\gamma_n}[y]$ with  $M_1\dashrightarrow Q$.
Then $P_\om$ is the direct limit of all
$Q\in L_{\kappa}[y]$ with $M_1\dashrightarrow Q$,
and therefore $\delta^Q=(\kappa^+)^{L[y]}=\om_1^{L[x]}$;
the sequence $\left<P_n\right>_{n\leq\om}\in L[x]$
because in $L[y]$, we can uniformly compute the direct
limit associated to a given limit cardinal $\gamma\leq\kappa$.

Also recall in this connection that by \cite{a_long_comparison},
there can be an $M_1$-like $N$ with $N|\delta^N\in\HC^{L[x]}$
and $M_1^\#\in L[x]$ and the pseudo-comparison of $M_1$ with $N$
lasts (exactly) $\om_1^{L[x]}$ stages.

\bibliographystyle{plain}
\bibliography{../bibliography/bibliography}

\end{document}